\tikzset{>=stealth}
\theoremstyle{plain}
\newtheorem{thm}{Theorem}[section]
\newtheorem{lem}[thm]{Lemma}
\newtheorem{prop}[thm]{Proposition}
\newtheorem{cor}[thm]{Corollary}
\theoremstyle{definition}
\newtheorem{defn}[thm]{Definition}
\newtheorem{exmp}[thm]{Example} 
\newtheorem{notation}[thm]{Notation}
\newtheorem{rmk}[thm]{Remark}
\newtheorem{note}[thm]{Note}
\newtheorem{ques}[thm]{Question}
\newtheorem{caution}[thm]{Caution}
\newcommand{\OO}{\mathcal{O}}
\title{Slices of Okounkov bodies of big divisors on Mori dream spaces}
\author{Jaesun Shin}
\date{}
\address{Department of Mathematical Sciences, KAIST, 291 Daehak-ro, Yuseong-gu, Daejon 305-701, Korea}
\email{jsshin1991@kaist.ac.kr}
\begin{document}
\maketitle
\begin{abstract}
The purpose of this paper is to study the slices of the Okounkov bodies of Mori dream spaces. First, we analyze all the slices of the Okounkov bodies of big divisors on Mori dream spaces associated to some admissible flags. As a byproduct, we obtain their descriptions on Mori dream threefolds. Finally, we consider its application to the rational polyhedrality of them.
\end{abstract}

\begin{section} {Introduction}
In this paper, we work over $\mathbb{C}$ and assume that a variety is smooth unless otherwise stated. Let $X$ be a projective variety, $X_{\bullet}$ an admissible flag (see Definition \ref{defn:admissible flag}) and $D$ a big divisor on $X$. By the earlier works of Okounkov (\cite{O1}, \cite{O2}), Lazarsfeld and Musta\c t\v a (\cite{RM}) constructed the convex bodies of $D$ on $X$ associated to $X_{\bullet}$, deonted by $\Delta_{X_{\bullet}}(D)$, which we call the Okounkov bodies of $D$ associated to $X_{\bullet}$. The importance of the Okounkov bodies of a big divisor is that we can study geometric problems through combinatoric ones. After the construction by Lazarsfeld and Musta\c t\v a, there are many works to obtain the various properties of a divisor by using the Okounkov bodies. In particular, the Okounkov bodies are closely related to the numerical properties of a given big divisor. For example, it only depends on the numerical class of a big divisor (see \cite[Propositon 4.1]{RM}). Also,  K\" uronya and Lozovanu proved that nefness (\cite[Corollary 2.2]{KL2}) and ampleness (\cite[Corollary 3.2]{KL2}) of big divisors can be recovered from the shapes of the Okounkov bodies. Furthermore, they proved that the infinitesimal Okounkov bodies contain informations about moving Seshadri constant (\cite[Theorem C]{KL}). \

However, the explicit computations of the Okounkov bodies are difficult due to the complications unless $X$ is low-dimensional or has simple structures. For example, when $X$ is a curve, since a divisor $D$ is just a point, we can easily obtain $\Delta_{X_{\bullet}}(D)$ by using Riemann-Roch (see Example \ref{exmp:curve}). Also, when $X$ is a surface, Zariski proved that every pseudo-effective divisor $D$ has a Zariski decomposition (\cite[Proposition 2.3.19]{RM}). By using it, we can describe the Okounkov bodies of big divisors associated to a flag $X_{\bullet}:X \supset C \supset \{x\}$ (Example \ref{exmp:surface}) as the following shows (in fact, it can be extended to pseudo-effective divisors by using limiting Okounkov bodies (see \cite[Theorem 4.5]{CPW1})), 
\begin{align*}
\Delta_{X_{\bullet}}(D)=&\{(x_{1}, x_{2}) \in \mathbb{R}^{2}| \text{ } {\rm ord}_{C}(\lVert D \lVert) \le x_{1} \le \mu, \\
& \text{ } {\rm ord}_{x}(N_{D-x_{1}C}|_{C}) \le x_{2} \le \text{ } {\rm ord}_{x}(N_{D-x_{1}C}|_{C})+(C.P_{D-x_{1}C})\},
\end{align*}
where ${\rm ord}_{C}(\lVert D \lVert)$ is the asymptotic valuation of $D$ on $C$ (Definition \ref{defn:asymptotic valuation}), $\mu :={\rm sup}\{s>0|D-sC \text{ is big}\}$, $D-x_{1}C=P_{D-x_{1}C}+N_{D-x_{1}C}$ is the Zariski decomposition, and ${\rm ord}_{x}(N_{D-x_{1}C}|_{C})$ is the order of $N_{D-x_{1}C}|_{C}$ at $x$. \

Now, consider the case when $X$ is a variety of dimension $n \ge 3$. The following question arises from the above description on a surface.

\begin{ques} \label{ques:threefold}
Is there a general description of the Okounkov bodies of divisors on $X$ as that on a surface?
\end{ques}

One way to describe the Okounkov bodies is to analyze all the slices of them. In this way, we obtain the Okounkov bodies of a surface using the Zariski decomposition of a divisor. However, in higher dimensions, Zariski decompositions do not exist in general. Thus, when $X$ is a a variety of dimension $n \ge 3$, we can consider two cases. The first and the easiest case is when all the big divisors admit the Zariski decompositions. We can obtain their Okounkov bodies by following the same construction as that of a surface case. One of the simplest examples is a threefold whose pseudo-effective cone and nef cone coincide (see Example \ref{exmp:homogeneous}). In this case, by using the similar argument in the proof of Lemma \ref{lem:restriction}, we can describe the Okounkov bodies. The next difficult case is when not all divisors on $X$ have the Zariski decomposition. In general, it is hard to give an answer to Question \ref{ques:threefold} for the last case. Therefore, we consider it to the case when $X$ is a Mori dream space (see Definition \ref{defn:MDS}). One of the nice features of Mori dream spaces is that every divisor has a decomposition similar with a Zariski decomposition (see Proposition \ref{prop:Zariski decomposition}), which is helpful to obtain the Okounkov bodies. \

The aim of this paper is to analyze all the slices of Okounkov bodies of big divisors on Mori dream spaces, which are essential to have an answer on Question \ref{ques:threefold}. Now, let $X$ be a Mori dream space of dimension $n$, $D$ a big divisor on $X$, and $X_{\bullet}:X=Y_{0} \supset Y_{1} \supset \cdots \supset Y_{n}=\{p\}$ an admissible flag with $Y_{1} \cap (\cup_{i \in I_{D}} {\rm Ud}(f_{i}))=\emptyset$, where ${\rm Ud}(f_{i})$ and $I_{D}$ are defined in Note \ref{note:set-up}. The main idea is to observe $\Delta_{\tilde{X_{i}}_{\bullet}}(\phi_{i}^{*}D_{t})$ and $\Delta_{\tilde{X_{i}}_{\bullet}}(\tilde{f_{i}}^{*}({f_{i}}_{*}^{c}D_{t}))$, where $D_{t}$, $\phi_{i}$, ${\tilde{X_{i}}}_{\bullet}$, and $f_{i}$ are as in Notation \ref{notation:Mori dream threefold} and Note \ref{note:set-up}. By Proposition \ref{prop:important lemma}, we obtain that they are the same. By using this with some lemmas, we obtain our main theorem. 

\begin{thm}(=Theorem \ref{thm:slices}) \label{thm:introduction}
Let $X$, $X_{\bullet}$, $D$, $I_{D}$, and $[\alpha_{i}, \beta_{i}]$ be as in Notation \ref{notation:Mori dream threefold} and Note \ref{note:set-up}. Then, for each $i=1, \dots, r$, there exist the linear function $l_{i}(t)=(l_{i}^{1}(t), \cdots, l_{i}^{n-1}(t))$ defined on each $[\alpha_{i}, \beta_{i}]$ such that 
\begin{align*}
\Delta_{X_{\bullet}}(D)_{x_{1}=t}=\Delta_{{Y_{1}}_{\bullet}}(P_{D_{t}}|_{Y_{1}})+l_{i}(t)
\end{align*} 
for all $t=t_{i} \in [\alpha_{i}, \beta_{i}]$. 

\end{thm}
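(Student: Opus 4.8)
The plan is to fix an index $i$ and a value $t = t_i \in [\alpha_i, \beta_i]$, to work over the single Mori chamber that $D_t = D - tY_1$ determines, and to reduce the computation of the slice to the positive part $P_{D_t}$ by showing that the negative part $N_{D_t}$ contributes only a translation. First I would record the valuative description of the slice: for the admissible flag $X_{\bullet}$ the first coordinate of the defining valuation $\nu_{X_{\bullet}}$ is ${\rm ord}_{Y_1}$, so a section $s \in H^0(X, mD)$ with ${\rm ord}_{Y_1}(s) = tm$ determines, after dividing by the $tm$-th power of a local equation of $Y_1$ and restricting to $Y_1$, a section of $m D_t|_{Y_1}$ whose remaining coordinates are $\nu_{{Y_1}_{\bullet}}(\cdot)$. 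Thus $\Delta_{X_{\bullet}}(D)_{x_1=t}$ is the closure of the normalized images of these restricted valuations, which is exactly the kind of restricted body treated in Lemma \ref{lem:restriction}.

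Second, I would split off the fixed part. By Proposition \ref{prop:Zariski decomposition}, on the chamber corresponding to $[\alpha_i, \beta_i]$ we have $D_t = P_{D_t} + N_{D_t}$ with $N_{D_t} = \sum_{j} c_j(t) E_j$ an effective divisor whose components are contracted by $f_i$ and whose coefficients $c_j(t)$ are linear in $t$. Since $Y_1 \cap (\cup_{i \in I_D} {\rm Ud}(f_i)) = \emptyset$, the restriction $N_{D_t}|_{Y_1}$ is a well-defined effective divisor on $Y_1$ meeting no flag center $Y_k$ with $k \ge 2$, and, because $N_{D_t}$ is the fixed part, every section of $m D_t|_{Y_1}$ arising by restriction from $H^0(X, mD)$ contains $m N_{D_t}|_{Y_1}$ as a fixed component. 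As $\nu_{{Y_1}_{\bullet}}$ is a valuation, this forces every point of the slice to be translated by the fixed vector $\nu_{{Y_1}_{\bullet}}(N_{D_t}|_{Y_1})$; I would therefore set $l_i(t) := \nu_{{Y_1}_{\bullet}}(N_{D_t}|_{Y_1})$, and additivity of ${\rm ord}$ together with linearity of the $c_j(t)$ gives $l_i(t) = \sum_{j} c_j(t)\, \nu_{{Y_1}_{\bullet}}(E_j|_{Y_1})$, which is visibly linear on $[\alpha_i, \beta_i]$.

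Third, it remains to identify the moving contribution with $\Delta_{{Y_1}_{\bullet}}(P_{D_t}|_{Y_1})$. Here I would pass to the common resolution $\phi_i : \tilde{X_i} \to X$ and $\tilde{f_i} : \tilde{X_i} \to X_i$ of Notation \ref{notation:Mori dream threefold}. Because the flag avoids ${\rm Ud}(f_i)$, the map $\phi_i$ is an isomorphism in a neighbourhood of $Y_1$, so the slice is unchanged when $D_t$ is replaced by $\phi_i^{*} D_t$ and $X_{\bullet}$ by $\tilde{X_i}_{\bullet}$. Proposition \ref{prop:important lemma} then gives $\Delta_{\tilde{X_i}_{\bullet}}(\phi_i^{*} D_t) = \Delta_{\tilde{X_i}_{\bullet}}(\tilde{f_i}^{*}({f_i}_{*}^{c} D_t))$, so the class is replaced by the pullback of the nef divisor ${f_i}_{*}^{c} D_t$ from $X_i$. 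For such a pullback the restriction map $H^0(\tilde{X_i}, m\,\tilde{f_i}^{*}({f_i}_{*}^{c} D_t)) \to H^0(Y_1, \cdot)$ is asymptotically surjective onto the sections recorded by $P_{D_t}|_{Y_1}$ — this is precisely the mechanism of Lemma \ref{lem:restriction} — so the normalized restricted valuations fill out $\Delta_{{Y_1}_{\bullet}}(P_{D_t}|_{Y_1})$. Combining the three steps yields $\Delta_{X_{\bullet}}(D)_{x_1=t} = \Delta_{{Y_1}_{\bullet}}(P_{D_t}|_{Y_1}) + l_i(t)$.

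The step I expect to be the main obstacle is the third: establishing the \emph{exact} equality, and not merely an inclusion, between the moving part of the slice and $\Delta_{{Y_1}_{\bullet}}(P_{D_t}|_{Y_1})$. The delicate point is that $D_t$ itself need not restrict well to $Y_1$, so one cannot argue directly on $X$; the entire purpose of introducing $\phi_i$, $\tilde{f_i}$, and Proposition \ref{prop:important lemma} is to transport the problem to a model on which ${f_i}_{*}^{c} D_t$ is nef and the restriction sequence is surjective for $m \gg 0$. Care is also needed at the chamber endpoints $t = \alpha_i, \beta_i$, where the identity should be recovered by a continuity argument as the interior values of $t$ tend to the boundary, and in verifying that $N_{D_t}|_{Y_1}$ genuinely meets no flag center $Y_k$, which is what guarantees that its contribution is a pure translation rather than a reshaping of the body.
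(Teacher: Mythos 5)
Your route is the paper's route -- slice equals restricted body of $D_{t}$, pass to the resolution of indeterminacy, trade $\phi_{i}^{*}D_{t}$ for $\tilde{f_{i}}^{*}({f_{i}}_{*}^{c}D_{t})$ via Proposition \ref{prop:important lemma}, split off the negative part as a translation as in Lemma \ref{lem:restriction}, and identify what remains with $\Delta_{{Y_{1}}_{\bullet}}(P_{D_{t}}|_{Y_{1}})$ -- but two of your justifications are wrong, and one of them you yourself list as a step your proof must verify. The claim that $Y_{1} \cap (\cup_{i \in I_{D}} {\rm Ud}(f_{i}))=\emptyset$ forces $N_{D_{t}}|_{Y_{1}}$ to meet no flag center $Y_{k}$, $k \ge 2$, is false: the undefined loci of the SQMs have nothing to do with where the negative part meets $Y_{1}$, and no hypothesis in the theorem rules out such intersections. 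Worse, it is inconsistent with your own construction: if $N_{D_{t}}|_{Y_{1}}$ missed every flag center, then your $l_{i}(t):=\nu_{{Y_{1}}_{\bullet}}(N_{D_{t}}|_{Y_{1}})$ would be identically zero and the theorem would need no translation at all. The point you are missing is that the translation property needs no avoidance: it follows purely from additivity of $\nu_{{Y_{1}}_{\bullet}}$ under tensoring with the fixed canonical section $s_{N_{D_{t}}}|_{Y_{1}}$ (this is exactly the mechanism of Lemma \ref{lem:restriction}); whether that section vanishes at flag centers only determines the \emph{value} of $l_{i}(t)$, never a ``reshaping'' of the body. So the verification you flag at the end is both impossible in general and unnecessary; it should simply be deleted.

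The second problem is your mechanism for the final identification. You assert that the restriction map $H^{0}(\tilde{X_{i}}, m\tilde{f_{i}}^{*}({f_{i}}_{*}^{c}D_{t})) \to H^{0}(\tilde{Y_{1}}^{i}, \cdot)$ is ``asymptotically surjective'' / ``surjective for $m \gg 0$''; also note that ${f_{i}}_{*}^{c}D_{t}$ is not nef (only its positive part ${f_{i}}_{*}^{c}P_{D_{t}}$ is). Surjectivity via Serre vanishing is available for \emph{ample} divisors (Proposition \ref{prop:restricted ample}), not for merely nef ones, so as stated this step is a genuine gap. The paper bridges it differently: by Lemma \ref{lem:restriction} reduce to the nef divisor $\tilde{f_{i}}^{*}({f_{i}}_{*}^{c}P_{D_{t}})$, then use Proposition \ref{prop:restricted ample} together with \emph{continuity of restricted Okounkov bodies} to get $\Delta_{\tilde{X_{i}}_{\bullet}|\tilde{Y_{1}}^{i}}(\tilde{f_{i}}^{*}({f_{i}}_{*}^{c}P_{D_{t}}))=\Delta_{\tilde{Y_{1}}^{i}_{\bullet}}(\tilde{f_{i}}^{*}({f_{i}}_{*}^{c}P_{D_{t}})|_{\tilde{Y_{1}}^{i}})$, and finally use the commutative diagram $(\tilde{f_{i}}^{*}({f_{i}}_{*}^{c}P_{D_{t}}))|_{\tilde{Y_{1}}^{i}}=(\phi_{i}|_{\tilde{Y_{1}}^{i}})^{*}(P_{D_{t}}|_{Y_{1}})$, valid because $\phi_{i}$ is an isomorphism near $Y_{1}$, to land on $\Delta_{{Y_{1}}_{\bullet}}(P_{D_{t}}|_{Y_{1}})$. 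A smaller slip in the same spirit: the components of $N_{D_{t}}$ are not ``contracted by $f_{i}$'' -- an SQM contracts no divisors; they lie in the exceptional cone of the contracting map $g_{i}$ of Proposition \ref{prop:basic}(2).
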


Note that the linear function $l_{i}(t)$ on Theorem \ref{thm:introduction} is defined in the proof of Lemma \ref{lem:restriction}. Theorem \ref{thm:introduction} says that if we know the Okounkov bodies of big divisors on $Y_{1}$, we obtain all the slices of $\Delta_{X_{\bullet}}(D)$ on $X$ so that $\Delta_{X_{\bullet}}(D)$ is known. Moreover, Theorem \ref{thm:introduction} can be extended to the limiting Okounkov bodies of pseudo-effective divisors naturally (Remark \ref{rmk:extension}). Since we know the Okounkov bodies of big divisors on a surface, we obtain their descriptions on Mori dream threefolds as a byproduct of Theorem \ref{thm:introduction}. 

\begin{cor} (=Corollary \ref{cor:Mori dream space}) \label{cor:introduction 2}
Let $X$ be a Mori dream threefold, and $X_{\bullet}:X \supset S \supset C \supset \{p\}$ an admissible flag with $S \cap (\cup_{i \in I_{D}} {\rm Ud}(f_{i}))=\emptyset$, where $D$, $I_{D}$, and $[\alpha_{i}, \beta_{i}]$ are as in Notation \ref{notation:Mori dream threefold} and Note \ref{note:set-up}. Then, for each $i=1, \dots, r$, there exist the linear function $l_{i}(t)=(l_{i}^{1}(t), l_{i}^{2}(t))$ defined on each $[\alpha_{i}, \beta_{i}]$ such that 
\begin{align*}
\Delta_{X_{\bullet}}(D)=&\{(x_{1},x_{2},x_{3}) \in \mathbb{R}^{3}| \text{ }\text{\rm ord}_{S}(\lVert D \lVert) \le x_{1} \le \mu, \text{ for each } x_{1}=t_{i} \in [\alpha_{i}, \beta_{i}], \\
& l_{i}^{1}(t_{i}) \le x_{2} \le \mu_{t_{i}}+l_{i}^{1}(t_{i}), \text{ } \delta_{x_{2}}(t_{i}) \le x_{3} \le \delta_{x_{2}}(t_{i})+(P_{(P_{D_{t_{i}}}|_{S}-x_{2}C)}.C)\},
\end{align*}
where $\mu={\rm sup}\{t>0|\text{ }D-tS \in {\rm Big}(X)\}$, $\mu_{t}={\rm sup}\{\alpha>0|\text{ }P_{D_{t}}|_{S}-\alpha C \in {\rm Big}(S)\}$, $P_{D_{t_{i}}}|_{S}-x_{2}C=P_{(P_{D_{t_{i}}}|_{S}-x_{2}C)}+N_{(P_{D_{t_{i}}}|_{S}-x_{2}C)}$ is the Zariski decomposition in the usual sense, and $\delta_{x_{2}}(t_{i})={\rm ord}_{p}(N_{(P_{D_{t_{i}}}|_{S}-x_{2}C)}|_{C})+l_{i}^{2}(t_{i})$ is a linear function on each $[\alpha_{i}, \beta_{i}]$ for fixed $x_{2} \in [l_{i}^{1}(t_{i}), \mu_{t_{i}}+l_{i}^{1}(t_{i})]$.
\end{cor}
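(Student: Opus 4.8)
The plan is to read off the three-dimensional body directly from the main theorem by specializing to $n = 3$ and then inserting the classical surface picture. For a threefold with flag $X_{\bullet} : X \supset S \supset C \supset \{p\}$ we have $Y_{1} = S$, and the flag induced on $Y_{1}$ is the surface flag $S_{\bullet} : S \supset C \supset \{p\}$. Theorem \ref{thm:slices} then gives, for each $t = t_{i} \in [\alpha_{i}, \beta_{i}]$,
\[
\Delta_{X_{\bullet}}(D)_{x_{1} = t_{i}} = \Delta_{S_{\bullet}}(P_{D_{t_{i}}}|_{S}) + l_{i}(t_{i}),
\]
with $l_{i}(t_{i}) = (l_{i}^{1}(t_{i}), l_{i}^{2}(t_{i}))$ the linear function from the proof of Lemma \ref{lem:restriction}. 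Thus it suffices to describe the planar body $\Delta_{S_{\bullet}}(P_{D_{t_{i}}}|_{S})$ and translate it by $l_{i}(t_{i})$. The outer bounds ${\rm ord}_{S}(\lVert D \lVert) \le x_{1} \le \mu$ are immediate from the meaning of the first valuation coordinate $x_{1} = {\rm ord}_{S}$: its asymptotic minimum over sections of $D$ is ${\rm ord}_{S}(\lVert D \lVert)$ and its supremum is the largest $t$ for which $D - tS$ is big, namely $\mu$.

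The decisive geometric input is the flag hypothesis $S \cap (\cup_{i \in I_{D}} {\rm Ud}(f_{i})) = \emptyset$. Because $S$ misses the undefined locus of each relevant rational contraction $f_{i}$, the restriction $f_{i}|_{S}$ is a morphism; since on the chamber $[\alpha_{i}, \beta_{i}]$ the positive part $P_{D_{t}}$ is the $f_{i}$-pullback of a nef class (Proposition \ref{prop:Zariski decomposition}), its restriction $P_{D_{t}}|_{S}$ is a nef divisor on the surface $S$, and by the set-up of Note \ref{note:set-up} it is big. In particular the restricted base locus of $P_{D_{t}}|_{S}$ is empty, so ${\rm ord}_{C}(\lVert P_{D_{t}}|_{S} \lVert) = 0$. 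This is exactly what collapses the lower bound for the second coordinate in the surface formula to the translation term alone.

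Next I would substitute $E := P_{D_{t_{i}}}|_{S}$ into the surface description of Example \ref{exmp:surface}, writing the untranslated body in coordinates $(y_{2}, y_{3})$. Using ${\rm ord}_{C}(\lVert E \lVert) = 0$ this reads $0 \le y_{2} \le \mu_{t_{i}}$ and ${\rm ord}_{p}(N_{E - y_{2}C}|_{C}) \le y_{3} \le {\rm ord}_{p}(N_{E - y_{2}C}|_{C}) + (P_{(E - y_{2}C)} . C)$, where $\mu_{t_{i}} = {\rm sup}\{\alpha > 0 : E - \alpha C \in {\rm Big}(S)\}$ and $E - y_{2}C = P_{(E - y_{2}C)} + N_{(E - y_{2}C)}$ is the usual Zariski decomposition on $S$. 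Adding the shift $l_{i}(t_{i})$ translates the $y_{2}$-interval to $[l_{i}^{1}(t_{i}), \mu_{t_{i}} + l_{i}^{1}(t_{i})]$ and raises both $y_{3}$-bounds by $l_{i}^{2}(t_{i})$; this produces the stated ranges for $x_{2}$ and $x_{3}$, with $\delta_{x_{2}}(t_{i}) = {\rm ord}_{p}(N_{(P_{D_{t_{i}}}|_{S} - x_{2}C)}|_{C}) + l_{i}^{2}(t_{i})$ as the lower face and $\delta_{x_{2}}(t_{i}) + (P_{(P_{D_{t_{i}}}|_{S} - x_{2}C)} . C)$ as the upper face.

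The \textbf{main obstacle} is the asserted linearity of $\delta_{x_{2}}(t_{i})$ in $t_{i}$ on each $[\alpha_{i}, \beta_{i}]$ for fixed $x_{2}$. As $l_{i}^{2}$ is already linear, this reduces to showing ${\rm ord}_{p}(N_{(P_{D_{t_{i}}}|_{S} - x_{2}C)}|_{C})$ is affine in $t_{i}$. Within a single chamber $P_{D_{t}}$ varies linearly in $t$, being $f_{i}^{*}$ of an affine family, so $E - x_{2}C$ sweeps out a line segment in $N^{1}(S)_{\RR}$; I would subdivide $[\alpha_{i}, \beta_{i}]$ if necessary so that this segment stays in one Zariski chamber of $S$, on which the negative part is supported on a fixed set of curves with coefficients that are affine in $t_{i}$, forcing its order at $p$ to be affine as well. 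Two points then need care: controlling the Zariski-chamber structure of $S$ uniformly along the segment, and verifying the bookkeeping of $l_{i}^{1}$ — namely that the argument of $\delta_{x_{2}}$ may be written with $x_{2}$ rather than $x_{2} - l_{i}^{1}(t_{i})$, which one checks by confirming (again from the flag condition) that the fixed part $N_{D_{t}}$ meets $S$ trivially so that no further shift in the $C$-order is introduced. The bigness and nefness established in the second step are what keep every intersection number and order well defined throughout.
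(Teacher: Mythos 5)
Your route is the paper's own: specialize Theorem \ref{thm:slices} to $n=3$, so that $\Delta_{X_{\bullet}}(D)_{x_{1}=t_{i}}=\Delta_{S_{\bullet}}(P_{D_{t_{i}}}|_{S})+l_{i}(t_{i})$, plug in the surface description of Example \ref{exmp:surface} for $\Delta_{S_{\bullet}}(P_{D_{t_{i}}}|_{S})$, and translate; the paper's proof consists of exactly these steps and nothing more, and your extra observations (that $P_{D_{t}}|_{S}$ is nef because $f_{i}|_{S}$ is a morphism, hence ${\rm ord}_{C}(\lVert P_{D_{t}}|_{S} \rVert)=0$, which is why the untranslated $x_{2}$-range starts at $0$) are details the paper leaves implicit. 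So the core of your proposal is correct and matches the paper.

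The one step that fails is your resolution of the $l_{i}^{1}$-bookkeeping. The hypothesis $S \cap (\cup_{i \in I_{D}}{\rm Ud}(f_{i}))=\emptyset$ does \emph{not} imply that $N_{D_{t}}$ meets $S$ trivially: the loci ${\rm Ud}(f_{i})$ have codimension at least two (the $f_{i}$ are small modifications), while ${\rm Supp}(N_{D_{t}})$ is a divisor, so it may well meet $S$ and even contain $C$ in its restriction; in that case $l_{i}^{1}(t)={\rm ord}_{C}\bigl((\tilde{f_{i}}^{*}s_{N_{D_{t}}})|_{\tilde{Y_{1}}^{i}}\bigr) \neq 0$ by the definition of $l_{i}$ in the proof of Lemma \ref{lem:restriction}. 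What the translation actually yields is that for $x_{2} \in [l_{i}^{1}(t_{i}), \mu_{t_{i}}+l_{i}^{1}(t_{i})]$ the $x_{3}$-bounds are governed by the Zariski decomposition of $P_{D_{t_{i}}}|_{S}-(x_{2}-l_{i}^{1}(t_{i}))C$, not of $P_{D_{t_{i}}}|_{S}-x_{2}C$; the displayed formula must be read with this substitution (as literally written it can even be undefined, since for $x_{2}>\mu_{t_{i}}$ the divisor $P_{D_{t_{i}}}|_{S}-x_{2}C$ need not be pseudo-effective, hence need not have a Zariski decomposition). Your other worry is equally well founded: linearity of $\delta_{x_{2}}$ in $t_{i}$ on all of $[\alpha_{i},\beta_{i}]$ is asserted but not proved in the paper, and by Remark \ref{rmk:linear} one only gets it while the affine path $t \mapsto P_{D_{t}}|_{S}-(x_{2}-l_{i}^{1}(t))C$ stays inside a single Zariski chamber of ${\rm Big}(S)$; in general the honest conclusion is piecewise linearity after the subdivision you describe, which is consistent with the finiteness hypotheses the paper later imposes in Corollary \ref{cor:polyhedral}. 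In short, your argument is the paper's argument, carried out more carefully; but the specific claim you invoke to remove the shift in the subscript is false, and the correct fix is to keep the shift rather than to argue it vanishes.
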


Moreover, Corollary \ref{cor:pseudo-effective divisor} says that Corollary \ref{cor:introduction 2} also holds for a pseudo-effective divisor $D$. Then, the natural question one can ask is whether the description in Corollary \ref{cor:introduction 2} holds without the assumption that $S \cap (\cup_{i \in I_{D}} {\rm Ud}(f_{i}))=\emptyset$. However, we can see that it does not hold in general (Caution \ref{caution:caution}). \

The paper is organized as follows. In Section 2, we recall the construction of Okounkov bodies, that of the restricted Okounkov bodies, and some examples. Moreover, we recall the definition of a Mori dream space and their basic properties. Section 3 is the main part of this paper. In this section, we prove Theorem \ref{thm:introduction}, and describe the Okounkov bodies of big divisors on Mori dream threefolds. Finally, in Section 4, we give an application of Corollary \ref{cor:introduction 2} to the rational polyhedrality of the (limiting) Okounkov bodies of Mori dream threefolds. 

\begin{notation} \label{notation:usual} \text{ }
\begin{enumerate}[(1)]
\item ${\rm Pic}(X)$ : the group of Cartier divisors on $X$ modulo linear equivalence. 
\item ${\rm N^{1}}(X)$ : the group of Cartier divisors on $X$ modulo numerical equivalence. 
\item ${\rm Pic}(X)_{k} = {\rm Pic}(X) \otimes_{\mathbb{Z}} k$ for $k=\mathbb{Q}, \mathbb{R}$ and similarly for ${\rm N^{1}}(X)_{k}$.
\item ${\rm Amp}(X)$ : the cone in ${\rm N^{1}}(X)_{\mathbb{R}}$ spanned by ample divisors. 
\item ${\rm Nef}(X)$ : the cone in ${\rm N^{1}}(X)_{\mathbb{R}}$ spanned by nef divisors. 
\item ${\rm Mov}(X)$ : the cone in ${\rm N^{1}}(X)_{\mathbb{R}}$ spanned by movable divisors. 
\item ${\rm Big}(X)$ : the cone in ${\rm N^{1}}(X)_{\mathbb{R}}$ spanned by big divisors. 
\item ${\rm Eff}(X)$ : the cone in ${\rm N^{1}}(X)_{\mathbb{R}}$ spanned by effective divisors. 
\item $\overline{C}$ : the closure of the cone $C$. 
\item ${\rm Ud}(f)$ : the undefined locus of a rational map $f:X \dashrightarrow Y$ on $X$.
\end{enumerate}
\end{notation}

\end{section}

\textbf{Acknowledgements.}
I would like to thank my advisor Yongnam Lee, for his advice, encouragement and teaching. I also thank to Sung Rak Choi for helpful comments on limiting Okounkov bodies of pseudo-effective divisors. This work was supported by Basic Science Program through the National Research Foundation of Korea funded by the Korea government(MSIP)(No.2013006431).

\begin{section} {Okounkov bodies and Mori dream spaces}

In this section, we give some preliminaries which we need later on. We recall the definition, basic properties of the Okounkov bodies, and those of Mori dream spaces. 

\begin{subsection} {Contruction}

In this subsection, we define the Okounkov bodies of big divisors on a projective variety. 

\begin{defn} \label{defn:admissible flag}
Let $X$ be a projective variety of dimension $n$. Consider a complete flag 
\begin{align*}
X_{\bullet} : X=X_{0} \supseteq X_{1} \supseteq \dots \supseteq X_{n}=\text{\{point\}}
\end{align*}
of subvarieties of $X$, where $\text{codim}(X_{i})=i$ and each $X_{i}$ is smooth. We call this an admissible flag. 
\end{defn}

\begin{rmk}
In this paper, we assume that all flags are admissible. A divisor means $\mathbb{Z}$-divisor unless otherwise stated. 
\end{rmk}

Let $X$ be a projective variety of dimension $n$, $D$ a big divisor on $X$, and fix an admissible flag $X_{\bullet}$. Now, we define a valuation-like function 
\begin{align*}
\nu:=\nu_{X_{\bullet},D}:H^{0}(X, \OO_{X}(D)) \rightarrow \mathbb{Z}^{n} \cup \{\infty\}, s \mapsto \nu(s)=(\nu_{1}(s), \cdots, \nu_{n}(s))
\end{align*}
by the following way: \\

Let $0 \neq s \in H^{0}(X, \OO_{X}(D))$ be a nonzero section. Define 
\begin{align*}
\nu_{1}(s):={\rm ord}_{X_{1}}(s).
\end{align*}
After choosing a local equation $f$ for $X_{1}$ in $X$, $s$ determines a section 
\begin{align*}
\tilde{s_{1}}:=s \otimes f^{-\nu_{1}} \in H^{0}(X, \OO_{X}(D-\nu_{1}X_{1})).
\end{align*}
By restricting $\tilde{s_{1}}$ to $X_{1}$, we obtain
\begin{align*}
s_{1} \in H^{0}(X_{1}, \OO_{X_{1}}(D-\nu_{1}X_{1})).
\end{align*}
Then, $\nu_{2}(s):={\rm ord}_{X_{2}}(s_{1})$. Once we have defined $\nu_{i}(s)$ for $i \leq n-1$, we define inductively by the same way $s_{i}$ and $\nu_{i+1}$. The values $\nu_{i}$ define the function $\nu$ as desired.

\begin{note}
The above $\nu=\nu_{X_{\bullet}}$ satisfies three valuation-like properties (this is why we call $\nu$ a valuation-like function): 
\begin{enumerate}[(1)]
\item $\nu_{X_{\bullet}}(s)=\infty$ if and only if $s=0$. 
\item $\nu_{X_{\bullet}}(s_{1}+s_{2}) \ge \text{min} \{\nu_{X_{\bullet}}(s_{1}), \nu_{X_{\bullet}}(s_{2})\}$ by ordering $\mathbb{Z}^{n}$ lexicographically, where $s_{1}, s_{2} \in H^{0}(X,\OO_{X}(D))$ are non-zero sections. 
\item $\nu_{X_{\bullet},D+E}(s \otimes t)=\nu_{X_{\bullet},D}(s)+\nu_{X_{\bullet},E}(t)$ for non-zero sections $s \in H^{0}(X,\OO_{X}(D))$ and $t \in H^{0}(X,\OO_{X}(E))$.
\end{enumerate}
\end{note}

By using $\nu$, we can define Okounkov bodies. 

\begin{defn} \label{defn:definition of Okounkov bodies}
Let $X, D$ be as above. Define 
\begin{align*}
\Gamma(D)_{m}:={\rm Im}(((H^{0}(X, \OO_{X}(mD))-\{0\}) \overset{\nu}{\rightarrow} \mathbb{Z}^{n})
\end{align*}
for all $m \in \mathbb{N}$. Define
\begin{align*}
\Sigma(\Gamma)=\text{closure of the convex hull of }\{(\Gamma(D)_{m}, m), m \in \mathbb{N}\}.
\end{align*}
The Okounkov body of $D$ with respect to the fixed flag $X_{\bullet}$ is the compact convex set 
\begin{align*}
\Delta_{X_{\bullet}}(D):&=\Sigma(\Gamma) \cap (\mathbb{R}^{n} \times \{1\}) \\
&=\text{closed convex hull } (\bigcup_{m \ge 1} \frac{1}{m} \cdot \Gamma(D)_{m}) \subset \mathbb{R}^{n}.
\end{align*}
\end{defn}

\begin{rmk} \label{rmk:Q-divisor} For a big $\mathbb{Q}$-divisor $D$, $\Delta_{X_{\bullet}}(D):=\frac{1}{m}(\Delta_{X_{\bullet}}(mD))$, where $mD$ is an integral divisor. For definition of Okounkov bodies of big $\mathbb{R}$-divisors, see \cite[Subsection 2.2]{CHPW}. 
\end{rmk}

By Remark \ref{rmk:Q-divisor}, we can define limiting Okounkov body of a pseudo-effective divisor, which is a natural generalization of Definitioin \ref{defn:definition of Okounkov bodies}. 

\begin{defn} 
Let $D$ be a pseudo-effective divisor on $X$. The limiting Okounkov body $\Delta_{X_{\bullet}}^{\rm lim}(D)$ of $D$ associated to the flag $X_{\bullet}$ is defined as 
\begin{align*}
\Delta_{X_{\bullet}}^{\rm lim}(D):=\lim_{\epsilon \rightarrow 0}\Delta_{X_{\bullet}}(D+\epsilon A)=\bigcap_{m \in \mathbb{N}}\Delta_{X_{\bullet}}(D+\frac{1}{m}A),
\end{align*}
where $A$ is a nef and big divisor on $X$. By the continuity of the Okounkov bodies, when $D$ is big, $\Delta_{X_{\bullet}}^{\rm lim}(D)=\Delta_{X_{\bullet}}(D)$. 
\end{defn}

\end{subsection}

\begin{subsection} {Restricted Okounkov bodies}

We recall the definition of the restricted Okounkov bodies. Before going on, we first define the restricted complete linear series of a divisor. 

\begin{defn} (Restricted complete linear series)
Let $X$ be a projective variety and $V$ a subvariety of $X$. Let $D$ be a big divisor on $V$. Set 
\begin{align*}
W_{m}=H^{0}(X|V, \OO_{X}(mD)):={\rm Im}(H^{0}(X,\OO_{X}(mD)) \overset{\rm rest.}{\longrightarrow} H^{0}(V, \OO_{V}(mD)),
\end{align*}
where the map is the restriction map. We call such a graded linear series $\{W_{m}\}_{m \in \mathbb{Z}_{\ge 0}}$ on $X$ the restricted complete linear series of $D$ from $X$ to $V$. 
\end{defn}

Now, we can define the restricted Okounkov bodies.   

\begin{defn} \label{defn:restricted}
Let $X$ be a projective variety and $V$ a subvariety of $X$. Given an admissible flag $X_{\bullet}$, we call $\Delta_{X_{\bullet}|V}(D)$ the restricted Okounkov bodies of $D$ to $V$ if we take sections in the restricted complete linear series of $D$ to $V$. 
\end{defn}

The importance of the restricted Okounkov bodies comes from the fact that it is closely related to the restriction of the Okounkov bodies of a divisor (Proposition \ref{prop:restriction}).

\begin{defn} \cite[Definition 2.1.20, 10.3.2]{RL}
Let $X$ be a projective variety, and $L$ a big divisor. The stable base locus of $L$ is the algebraic set
\begin{align*}
{\rm \bf{B}}(L):=\bigcap_{m \ge 1} {\rm Bs}(\lvert mL \rvert).
\end{align*}
The augmented base locus and restricted base locus of $L$ are 
\begin{align*}
{\rm \bf{B}}_{+}(L):=\bigcap_{A}{\rm \bf{B}}(L-A) \text{, and } {\rm \bf{B}}_{-}(L):=\bigcup_{A} {\rm \bf{B}}(L+A),
\end{align*}
respectively, where the intersection and the union are taken over all ample divisors $A$.  
\end{defn}

\begin{note}
${\rm \bf{B}}_{+}(L)$ and ${\rm \bf{B}}_{-}(L)$ are independent of the choice of $A$ and sufficiently small $\epsilon>0$.
\end{note}

\begin{prop} \label{prop:restriction}
Let $X$ be a projective variety of dimension $n$ and $X_{\bullet}:X \supset X_{1}=E \supset X_{2} \supset \dots \supset X_{n}=\{point\}$ a given flag. Let $D$ be a big divisor on $X$ and suppose that $E \nsubseteq {\rm \bf{B}}_{+}(D)$. For $t \in \mathbb{R}$ with $\Delta_{X_{\bullet}}(D)_{x_{1}=t} \neq \emptyset$, 
\begin{align*}
\Delta_{X_{\bullet}}(D)_{x_{1}=t}=\Delta_{X_{\bullet}|E}(D-tE).
\end{align*}
\end{prop}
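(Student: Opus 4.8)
The plan is to reduce the equality to a statement about the graded semigroups underlying the two bodies, prove the two inclusions by hand for rational $t$, and then promote the result to all $t$ by continuity; the hypothesis $E \nsubseteq {\rm \bf{B}}_{+}(D)$ will be needed for only one inclusion. First I would reduce to rational $t$. For $t$ in the interior of the interval on which $\Delta_{X_{\bullet}}(D)_{x_{1}=t}\neq\emptyset$, the slices of the fixed convex body $\Delta_{X_{\bullet}}(D)$ vary continuously in the Hausdorff metric, and the map $t\mapsto\Delta_{X_{\bullet}|E}(D-tE)$ is continuous precisely because $E\nsubseteq{\rm \bf{B}}_{+}(D)$ keeps $D-tE$ big along $E$ for $t$ near the given value; since rational $t$ are dense it then suffices to treat rational $t$, the endpoints being recovered by passing to closures. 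Fix such a $t$ and work with integers $m$ for which $mt\in\mathbb{Z}$.

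The arithmetic core is a decomposition of the valuation that is immediate from its inductive definition. Let $X_{\bullet}'$ denote the flag $E=X_{1}\supset X_{2}\supset\dots\supset X_{n}$ induced on $E$, and let $f$ be a local equation of $E$. For any nonzero $s\in H^{0}(X,\OO_{X}(mD))$ one has $\nu_{X_{\bullet},mD}(s)=\big({\rm ord}_{E}(s),\,\nu_{X_{\bullet}',\,mD-{\rm ord}_{E}(s)E}(s_{1})\big)$, where $s_{1}$ is the restriction to $E$ of $s\otimes f^{-{\rm ord}_{E}(s)}$. Consequently the valuation points in $\Gamma(D)_{m}$ whose first coordinate equals $mt$ are exactly the points $\{mt\}\times\nu_{X_{\bullet}'}\big(H^{0}(X|E,\OO_{X}(m(D-tE)))\big)$ coming from the restricted complete linear series of $D-tE$.

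This identity gives the inclusion $\Delta_{X_{\bullet}|E}(D-tE)\subseteq\Delta_{X_{\bullet}}(D)_{x_{1}=t}$ cheaply. A point of the restricted body is approximated by $\tfrac{1}{m}\nu_{X_{\bullet}'}(\bar s_{m})$ for sections $\bar s_{m}\in H^{0}(X|E,\OO_{X}(m(D-tE)))$; lifting each $\bar s_{m}$ to a section of $\OO_{X}(m(D-tE))$ not vanishing on $E$ and multiplying by $f^{mt}$ produces a genuine $s_{m}\in H^{0}(X,\OO_{X}(mD))$ with ${\rm ord}_{E}(s_{m})=mt$ exactly and second valuation coordinate $\nu_{X_{\bullet}'}(\bar s_{m})$, hence a valuation point lying in the slice. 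As the slice is closed and convex, the whole restricted body is contained in it.

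The reverse inclusion is the main obstacle, and here $E\nsubseteq{\rm \bf{B}}_{+}(D)$ is essential for two reasons. A point of the slice is a limit $\lim_{m}\tfrac{1}{m}\nu_{X_{\bullet}}(s_{m})$ in which only $\tfrac{1}{m}{\rm ord}_{E}(s_{m})\to t$, so the restricted data $\bar s_{m}$ lie in the linear series of $D-\tfrac{1}{m}{\rm ord}_{E}(s_{m})E$ rather than of $D-tE$, and moreover, since $\Delta_{X_{\bullet}}(D)$ is a closed convex hull, its slice at $x_{1}=t$ could in principle contain convex combinations of points with $x_{1}<t$ and $x_{1}>t$. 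Both phenomena are governed by the restricted volume ${\rm vol}_{X|E}(D-sE)$: the hypothesis $E\nsubseteq{\rm \bf{B}}_{+}(D)$ forces it to be positive for $s$ near $t$ and makes the family $s\mapsto\Delta_{X_{\bullet}|E}(D-sE)$ vary continuously and concavely, so that $\Delta_{X_{\bullet}}(D)$ coincides with $\bigcup_{s}\{s\}\times\Delta_{X_{\bullet}|E}(D-sE)$. Concavity of this family prevents the convex hull from producing spurious interpolated points in the slice, while continuity absorbs the drift of the first coordinate; together they yield $\Delta_{X_{\bullet}}(D)_{x_{1}=t}\subseteq\Delta_{X_{\bullet}|E}(D-tE)$. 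Combining the two inclusions for rational $t$ with the continuity reduction of the first step completes the proof.
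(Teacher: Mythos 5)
The first thing to say is that the paper does not actually prove this proposition: it is quoted from Lazarsfeld--Musta\c t\v a \cite[Theorem 4.24]{RM}, so your attempt can only be compared with the standard argument there, where the statement is made for $t$ \emph{strictly} less than $\mu={\rm sup}\{s : D-sE \text{ big}\}$. That restriction matters, and it is exactly where your plan breaks: as stated here (any $t$ with nonempty slice, hence including $t=\mu$) the claim is false. Take $X=\mathbb{P}^{1}\times\mathbb{P}^{1}$, $E$ a smooth curve of class $H_{1}+2H_{2}$, $x\in E$ a point where the degree-two map ${pr_{1}}|_{E}:E\to\mathbb{P}^{1}$ is unramified, and $D=3H_{1}+H_{2}$ (ample, so $\mathbf{B}_{+}(D)=\emptyset$). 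Then $\mu=\tfrac12$, and since $D-tE$ is ample for $t<\tfrac12$ the slices are $[0,(D-tE).E]=[0,7-4t]$, so the slice at $t=\tfrac12$ is $[0,5]$; but $D-\tfrac12E=\tfrac52H_{1}$, whose sections all pull back from the first factor, so $\Delta_{X_{\bullet}|E}(D-\tfrac12E)$ is the body of the pulled-back series $({pr_{1}}|_{E})^{*}\lvert\OO_{\mathbb{P}^{1}}(5m/2)\rvert$ at an unramified point, namely $[0,\tfrac52]$. So your step ``the endpoints being recovered by passing to closures'' cannot work: the family $t\mapsto\Delta_{X_{\bullet}|E}(D-tE)$ is \emph{not} continuous at $t=\mu$, and the slice there is the limit of the interior restricted bodies, not the restricted body of $D-\mu E$.

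For interior $t$ your first half is correct and is the standard easy half: the inductive valuation identity, the exact identification of the semigroup points of $\Gamma(D)_{m}$ at level $x_{1}=mt$ with the restricted semigroup of $D-tE$, and the lifting argument giving $\Delta_{X_{\bullet}|E}(D-tE)\subseteq\Delta_{X_{\bullet}}(D)_{x_{1}=t}$ with no hypothesis on $\mathbf{B}_{+}(D)$. The reverse inclusion, however, is asserted rather than proved, and the two assertions carrying all the weight are precisely the nontrivial content. First, you need ${\rm vol}_{X|E}(D-sE)>0$, equivalently $E\nsubseteq\mathbf{B}_{+}(D-sE)$, for all $s$ near an \emph{arbitrary} interior $t$; this does not follow formally from $E\nsubseteq\mathbf{B}_{+}(D)$, because $\mathbf{B}_{+}$ is not monotone along the ray $D-sE$ and openness of the condition only gives it near $s=0$. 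It is provable (multiply a section of $m_{0}(D-A)$ nonvanishing on $E$ against sections of $m_{1}(D-s'E)$ with $s'>t$ twisted down by their order along $E$, then use that the locus $\{\xi:E\nsubseteq\mathbf{B}_{+}(\xi)\}$ is open and closed under sums), but it is a genuine step, not a remark. Second, your deduction ``continuity and concavity imply $\Delta_{X_{\bullet}}(D)=\bigcup_{s}\{s\}\times\Delta_{X_{\bullet}|E}(D-sE)$'' is not assembled: concavity of the family comes from multiplicativity of sections (superadditivity of the restricted semigroups), not from positivity of the restricted volume as you claim; you must also first record that $\Delta_{X_{\bullet}}(D)$ is the closed convex hull of $\bigcup_{s\in\mathbb{Q}}\{s\}\times\Delta_{X_{\bullet}|E}(D-sE)$ (this does follow from your own semigroup identity plus the easy inclusion, but you never say so); and the displayed identity is literally false unless the union is replaced by its closure, whose fiber over $\mu$ is the limit fiber rather than the restricted body, as the example above shows. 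So the proposal reproduces the easy half correctly and correctly locates where $\mathbf{B}_{+}$ must enter, but the hard half remains a sketch whose unproved claims are essentially the theorem itself.
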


Proposition \ref{prop:restriction} plays a central role in getting the Okounkov bodies of a big divisor on a surface. However, computing the restricted Okounkov body is another problem. In an ample divisor case, we can have an easy answer since the first cohomology vanishes by Serre vanishing. For details, see \cite[Proposition 3.1]{KLM}. 

\begin{prop} \label{prop:restricted ample}
Let $X$ and $X_{\bullet}$ be as in Proposition \ref{prop:restriction}. Let $A$ be an ample divisor on $X$. Then, 
\begin{align*}
\Delta_{X_{\bullet}|X_{1}}(A)=\Delta_{{X_{1}}_{\bullet}}(A|_{X_{1}}).
\end{align*}
\end{prop}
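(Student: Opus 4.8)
The plan is to reduce the statement to the surjectivity of a restriction map on global sections, which holds for an ample divisor by Serre vanishing. First I would record that both sides are well defined and built from the same data: since $A$ is ample, its restriction $A|_{X_{1}}$ is ample, hence big, on $X_{1}$, and the truncated flag ${X_{1}}_{\bullet}:X_{1} \supset X_{2} \supset \cdots \supset X_{n}=\{point\}$ is again admissible. Both Okounkov bodies are computed from the very same valuation-like function $\nu':=\nu_{{X_{1}}_{\bullet}}$ on $X_{1}$; the only difference lies in the graded linear series feeding into it. The restricted body $\Delta_{X_{\bullet}|X_{1}}(A)$ uses
\begin{align*}
W_{m}={\rm Im}\bigl(H^{0}(X,\OO_{X}(mA)) \overset{\rm rest.}{\longrightarrow} H^{0}(X_{1},\OO_{X_{1}}(mA|_{X_{1}}))\bigr),
\end{align*}
whereas $\Delta_{{X_{1}}_{\bullet}}(A|_{X_{1}})$ uses the full space $H^{0}(X_{1},\OO_{X_{1}}(mA|_{X_{1}}))$. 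As $W_{m}$ is always a subspace of the latter, the inclusion $\Delta_{X_{\bullet}|X_{1}}(A) \subseteq \Delta_{{X_{1}}_{\bullet}}(A|_{X_{1}})$ is automatic, and only the reverse inclusion requires work.

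The geometric heart of the argument is to show that the restriction map is surjective for $m \gg 0$. Since $X$ is smooth, $X_{1}$ is a smooth Cartier divisor (it carries the local equation $f$ used in the construction of $\nu$), so twisting the ideal-sheaf sequence of $X_{1}$ by $\OO_{X}(mA)$ yields
\begin{align*}
0 \to \OO_{X}(mA-X_{1}) \to \OO_{X}(mA) \to \OO_{X_{1}}(mA|_{X_{1}}) \to 0.
\end{align*}
Passing to cohomology, the restriction map $H^{0}(X,\OO_{X}(mA)) \to H^{0}(X_{1},\OO_{X_{1}}(mA|_{X_{1}}))$ is surjective as soon as $H^{1}(X,\OO_{X}(mA-X_{1}))=0$. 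Because $A$ is ample, Serre vanishing gives $H^{1}(X,\OO_{X}(mA-X_{1}))=0$ for all $m \gg 0$; hence there is an $m_{0}$ with $W_{m}=H^{0}(X_{1},\OO_{X_{1}}(mA|_{X_{1}}))$ for every $m \ge m_{0}$.

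It then remains to upgrade this eventual agreement of the graded pieces to equality of the limiting convex bodies. Both linear series are multiplicative ($W_{m} \cdot W_{m'} \subseteq W_{m+m'}$, since a product of two restrictions is the restriction of the product), so the image sets $\Gamma^{\mathrm{res}}=\bigcup_{m}(\nu'(W_{m}\setminus\{0\}),m)$ and $\Gamma^{\mathrm{ord}}=\bigcup_{m}(\nu'(H^{0}(X_{1},\OO_{X_{1}}(mA|_{X_{1}}))\setminus\{0\}),m)$ are sub-semigroups of $\ZZ^{n-1}\times\NN$ coinciding in every degree $m \ge m_{0}$. Given $(\gamma,m)\in\Gamma^{\mathrm{ord}}$ with $m<m_{0}$, choosing $k$ with $km \ge m_{0}$ gives $k(\gamma,m)=(k\gamma,km)\in\Gamma^{\mathrm{ord}}=\Gamma^{\mathrm{res}}$ in degree $km$; since the closed convex cone generated by a set is invariant under positive rescaling, $(\gamma,m)\in\overline{\rm cone}(\Gamma^{\mathrm{res}})$. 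Thus the two semigroups span the same closed convex cone, and slicing at height one yields $\Delta_{{X_{1}}_{\bullet}}(A|_{X_{1}}) \subseteq \Delta_{X_{\bullet}|X_{1}}(A)$, which completes the proof.

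I expect the genuine geometric input to be the Serre-vanishing surjectivity of the second paragraph, while the step requiring the most care is the last one: passing from agreement of the graded pieces only in high degree to equality of the convex bodies. This is where a naive termwise comparison fails, and one must instead exploit the multiplicativity of the restricted linear series together with the scaling-invariance of the generated cone.
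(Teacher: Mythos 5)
Your proof is correct and follows essentially the same route the paper intends: the paper does not prove this proposition itself but defers to \cite[Proposition 3.1]{KLM}, explicitly flagging Serre vanishing of the first cohomology as the key point, which is exactly the heart of your argument. Your closing step — upgrading surjectivity of the restriction maps for $m \ge m_{0}$ to equality of the bodies via taking powers of sections and the scaling invariance of the generated cone — is the standard way to complete that outline and is carried out correctly.
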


\end{subsection}

\begin{subsection} {Examples}

In this subsection, we show some examples of the Okounkov bodies of big divisors on a curve, surface, and some special varieties. 

\begin{exmp} \label{exmp:curve} (Curve) \

Let $C$ be a smooth projective curve of genus $g$. Fix a point $p \in C$ and consider an admissible flag $C_{\bullet} : C \supseteq p$. Let $D$ be a divisor on $C$. Consider a valuation 
\begin{align*}
\nu_{C_{\bullet}}:H^{0}(C, \OO_{C}(D)) \rightarrow \mathbb{Z}.
\end{align*}
In this case, it is just ${\rm ord}_{p}(s)$ for $s \in H^{0}(C,\OO_{C}(D))$. Let $d:={\rm deg}_{C}(D)$. Then, it is clear that $\Delta_{C_{\bullet}}(D) \subseteq [0,d]$. Now, we show that the reverse inclusion holds. For $m \gg 0$ such that $md \ge 2g$, $|mD|$ is base point free. Thus, ${\rm ord}_{p}(s)=0$ for some $s \in H^{0}(C, \OO_{C}(D))$. Moreover, for such $m \gg 0$, by Riemann-Roch, $h^{0}(mD)=md+1-g$. It is well-known that 
\begin{align*}
\text{the number of points in } \nu_{C_{\bullet}}(H^{0}(mD)) = h^{0}(mD)=md+1-g
\end{align*}
for all $m \gg 0$. So convex hull of $\nu_{C_{\bullet}}(H^{0}(mD))$ contains $[0, md-g]$ for all $m \gg 0$: i.e., $[0, d-\frac{g}{m}] \subseteq \Delta_{C_{\bullet}}(D)$ for all $m \gg 0$. Therefore, $\Delta_{C_{\bullet}}(D)=[0,d]$. 
\end{exmp}

As in Example \ref{exmp:curve}, the Okounkov bodies of a divisor in a curve is easy to obtain. However, even in surface cases, it becomes much more complicated. 

\begin{exmp} \label{exmp:surface} (Surface) \

We refer the readers to \cite[Section 6]{RM} for more details. Let $X$ be a surface and $X_{\bullet}:X \supset C \supset x$ an admissible flag. The main idea is the following lemma. 
\begin{lem}
Let $D$ be a big $\mathbb{Q}$-divisor on a surface $X$ with a Zariski decomposition $D=P+N$. Assume that $C \nsubseteq \textbf{B}_{+}(D)$. Let 
\begin{align*}
\alpha(D)={\rm ord}_{x}(N|_{C}), \text{  } \beta(D)={\rm ord}_{x}(N|_{C})+(C.P).
\end{align*}
Then, the restricted Okounkov body of $D$ is 
\begin{align*}
\Delta_{X_{\bullet}|C}(D)=[\alpha(D), \beta(D)].
\end{align*}
\end{lem}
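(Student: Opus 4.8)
The plan is to compute the restricted Okounkov body $\Delta_{X_{\bullet}|C}(D)$ directly from its definition by identifying, for each $m \gg 0$, the image $\nu_{X_{\bullet}}(W_m)$ of the restricted complete linear series in $\ZZ$ (here a one-dimensional target, since $C$ is a curve and the restricted flag is just $C \supset \{x\}$). By Remark \ref{rmk:Q-divisor} we may clear denominators and assume $D$ is integral, so it suffices to understand the sections of $\OO_X(mD)$ after restriction to $C$. The key is to separate the contribution of the negative part $N$, which forces a fixed vanishing at $x$, from the contribution of the positive (nef) part $P$, which fills out an interval of length $(C.P)$.

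First I would use the Zariski decomposition $D = P + N$ to analyze the order of vanishing along $C$ of restricted sections. Writing $N = \sum a_j C_j$ with the $C_j$ the negative curves, the component of $N$ supported away from $C$ and the coefficient of $C$ itself combine to produce, upon restriction to $C$, a fixed divisor $N|_C$ on $C$ (using $C \nsubseteq \textbf{B}_+(D)$ to guarantee $C$ is not a component forcing infinite vanishing, and that restriction is well-behaved). Every section of $\OO_X(mD)$ that does not vanish identically on $C$ restricts to a section of $\OO_C(m(P+N)|_C)$ that vanishes to order at least $m\,{\rm ord}_x(N|_C)$ at $x$; this yields the lower bound $\nu_2 \ge m\,\alpha(D)$, hence $\alpha(D) = {\rm ord}_x(N|_C)$ as the left endpoint. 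The main content is then to show that the restricted sections of $P$ realize every value of $\nu_2$ between $0$ and $m(C.P)$ asymptotically, which gives length exactly $(C.P)$ and the right endpoint $\beta(D) = \alpha(D) + (C.P)$.

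For this surjectivity-up-to-the-right-endpoint step I would reduce to the nef part: since $P$ is nef and big, its restriction $P|_C$ is a divisor on $C$ of degree $(C.P)$, and by the curve computation in Example \ref{exmp:curve} the Okounkov body of $P|_C$ on $C$ is exactly $[0, (C.P)]$. The crucial technical point is that the restricted linear series $W_m = H^0(X|C, \OO_X(mD))$ captures enough of $H^0(C, \OO_C(mP|_C))$ asymptotically — that is, the restriction map $H^0(X, \OO_X(mP)) \to H^0(C, \OO_C(mP|_C))$ has image whose dimension grows like $m(C.P)$. This is where I would invoke that $P$ is nef and $C \nsubseteq \textbf{B}_+(D)$ (so $C \nsubseteq \textbf{B}_+(P)$), which controls the failure of surjectivity of the restriction; combined with the numerical computation of $\Delta_{C_\bullet}(P|_C)=[0,(C.P)]$ this produces the full interval after shifting by the fixed vanishing $\alpha(D)$.

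The hard part will be the asymptotic surjectivity of the restriction map, i.e.\ verifying that the restricted complete linear series of $P$ to $C$ has the expected asymptotic dimension so that $\Delta_{X_\bullet|C}(P)$ genuinely equals the full interval $[0,(C.P)]$ rather than a proper subinterval. On a surface this is exactly the point where one uses that $\textbf{B}_+$ of the positive part does not contain $C$, together with vanishing statements (or the continuity and numerical invariance of Okounkov bodies from \cite{RM}) to push the ample-case result of Proposition \ref{prop:restricted ample} to the merely nef-and-big part $P$. Once the two endpoints are pinned down, convexity forces $\Delta_{X_\bullet|C}(D)$ to be precisely the closed interval $[\alpha(D), \beta(D)]$.
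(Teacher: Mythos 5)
The paper offers no proof of this lemma---it is stated inside Example \ref{exmp:surface} with the proof deferred to \cite[Section 6]{RM}---and your proposal reconstructs essentially that cited argument: use the Zariski decomposition to write $H^{0}(X,\OO_{X}(mD))=s_{mN}\otimes H^{0}(X,\OO_{X}(mP))$, so the negative part contributes only the fixed shift ${\rm ord}_{x}(N|_{C})$, and then show the restricted series of the nef and big part $P$ fills the whole interval $[0,(C.P)]$. The single step you leave schematic, the asymptotic surjectivity of $H^{0}(X,\OO_{X}(mP))\rightarrow H^{0}(C,\OO_{C}(mP|_{C}))$, is exactly the known restricted-volume theorem ${\rm vol}_{X|C}(P)=(P.C)$ for $C \nsubseteq \mathbf{B}_{+}(P)$ (Ein--Lazarsfeld--Musta\c t\v a--Nakamaye--Popa), and your reduction from $C \nsubseteq \mathbf{B}_{+}(D)$ to $C \nsubseteq \mathbf{B}_{+}(P)$ is legitimate because on a surface $\mathbf{B}_{+}(D)={\rm Null}(P)\supseteq {\rm Supp}(N)$; so your outline is correct and matches the proof the paper points to.
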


Before going on, we need to define the asymptotic valuation ${\rm ord}_{V}(\lVert D \lVert)$. For more details, see \cite[Subsection 2.1]{CHPW}. 

\begin{defn} \label{defn:asymptotic valuation}
Let $X$ be a projective variety and $V \subset X$ be an irreducible subvariety of $X$. For a big divisor $D$ on $X$, define the asymptotic valuation of $D$ on $V$ as 
\begin{align*}
{\rm ord}_{V}(\lVert D \rVert):={\rm inf}\{ {\rm ord}_{V}(D')| \text{ } D \equiv D' \ge 0\}.
\end{align*}
\end{defn}

Now, $\mu :={\rm sup}\{s>0|D-sC \text{ is big}\}$. Then, we obtain the following proposition by using the above lemma with the linearity property of Zariski chambers in \cite{BKS}. See \cite[Theorem 6.4]{RM} for a proof. 

\begin{prop} 
Let $X$, $X_{\bullet}$, and $D$ be as above. Then, 
\begin{align*}
\Delta_{X_{\bullet}}(D)=&\{(x_{1}, x_{2}) \in \mathbb{R}^{2}| \text{ } {\rm ord}_{C}(\lVert D \lVert) \le x_{1} \le \mu, \\
& \text{ } {\rm ord}_{x}(N_{D-x_{1}C}|_{C}) \le x_{2} \le \text{ } {\rm ord}_{x}(N_{D-x_{1}C}|_{C})+(C.P_{D-x_{1}C})\},
\end{align*}
where ${\rm ord}_{C}(\lVert D \lVert)$ is the asymptotic valuation of $D$ on $C$, $D-x_{1}C=P_{D-x_{1}C}+N_{D-x_{1}C}$ is the Zariski decomposition, and ${\rm ord}_{x}(N_{D-x_{1}C}|_{C})$ is the order of $N_{D-x_{1}C}|_{C}$ at $x$.
\end{prop}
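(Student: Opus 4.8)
The plan is to reduce the two-dimensional statement to a one-parameter family of the restricted Okounkov bodies computed in the preceding Lemma. By Proposition \ref{prop:restriction}, for each $t$ with $\Delta_{X_{\bullet}}(D)_{x_{1}=t} \neq \emptyset$ we have the slice identification
\begin{align*}
\Delta_{X_{\bullet}}(D)_{x_{1}=t}=\Delta_{X_{\bullet}|C}(D-tC),
\end{align*}
so that the horizontal slices of $\Delta_{X_{\bullet}}(D)$ are precisely the restricted Okounkov bodies $\Delta_{X_{\bullet}|C}(D-tC)$. First I would identify the range of the first coordinate $x_{1}$: the lower bound is $\mathrm{ord}_{C}(\lVert D \rVert)$, since for $t$ below this asymptotic valuation the divisor $D-tC$ fails to have $C$ in an appropriate base locus configuration and the slice degenerates, while the upper bound is $\mu=\mathrm{sup}\{s>0 \mid D-sC \text{ is big}\}$ beyond which $D-tC$ is no longer big and the slice is empty.

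Next, for each fixed $t$ in the open interval $(\mathrm{ord}_{C}(\lVert D \rVert), \mu)$ I would apply the Lemma to the big $\mathbb{Q}$-divisor $D-tC$, whose Zariski decomposition I write as $D-tC=P_{D-tC}+N_{D-tC}$. The Lemma then gives
\begin{align*}
\Delta_{X_{\bullet}|C}(D-tC)=[\alpha(D-tC),\beta(D-tC)]=[\mathrm{ord}_{x}(N_{D-tC}|_{C}),\ \mathrm{ord}_{x}(N_{D-tC}|_{C})+(C.P_{D-tC})],
\end{align*}
which is exactly the description of the bounds on $x_{2}$ claimed in the proposition. Combining this slice-by-slice description over all admissible $t$ yields the stated set. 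The condition $C \nsubseteq \mathbf{B}_{+}(D)$ needed to invoke both Proposition \ref{prop:restriction} and the Lemma should be checked to hold for $D-tC$ on the relevant range; here I expect to use that $C \nsubseteq \mathbf{B}_{+}(D)$ implies the analogous non-containment for $D-tC$ when $t > \mathrm{ord}_{C}(\lVert D \rVert)$.

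The main obstacle is showing that the resulting region is genuinely convex and closed, i.e. that the pointwise-defined bounds assemble into the honest two-dimensional convex body $\Delta_{X_{\bullet}}(D)$ rather than merely agreeing on a dense set of slices. This is where the linearity of the Zariski decomposition across Zariski chambers enters: by the results of \cite{BKS}, the big cone decomposes into finitely many locally polyhedral chambers on each of which the map $D' \mapsto P_{D'}$ is linear. Restricting to the ray $t \mapsto D-tC$, the functions $t \mapsto \mathrm{ord}_{x}(N_{D-tC}|_{C})$ and $t \mapsto (C.P_{D-tC})$ are therefore piecewise linear and continuous in $t$, so the upper and lower boundaries of the region are piecewise-linear graphs. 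I would then invoke continuity of Okounkov bodies together with this piecewise-linearity to conclude that the union of the slices is closed and convex, matching $\Delta_{X_{\bullet}}(D)$ on a dense open set and hence everywhere by continuity. The delicate points are the behavior at the finitely many chamber walls, where the linear pieces meet, and at the endpoints $t = \mathrm{ord}_{C}(\lVert D \rVert)$ and $t = \mu$, where the slice may collapse to a point or empty set; these are exactly the places where appeal to \cite[Theorem 6.4]{RM} and \cite{BKS} is indispensable.
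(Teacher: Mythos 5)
Your overall architecture --- identify the slices of $\Delta_{X_{\bullet}}(D)$ with restricted Okounkov bodies via Proposition \ref{prop:restriction}, compute each slice with the Lemma, and use the Zariski-chamber linearity of \cite{BKS} to assemble the slices into a closed convex region --- is exactly the strategy of the proof the paper points to (the paper itself gives no argument beyond citing \cite[Theorem 6.4]{RM} together with the Lemma and \cite{BKS}). However, there is a genuine gap at the step where you invoke Proposition \ref{prop:restriction} and the Lemma, both of which require a non-containment in an augmented base locus. The proposition you are proving makes \emph{no} hypothesis of the form $C \nsubseteq \mathbf{B}_{+}(D)$; in fact, whenever ${\rm ord}_{C}(\lVert D \rVert)>0$ one has $C \subseteq \mathbf{B}_{-}(D) \subseteq \mathbf{B}_{+}(D)$, so Proposition \ref{prop:restriction} cannot be applied to $D$ at all --- and this is precisely the case the formula is designed to capture (it is why the first coordinate starts at ${\rm ord}_{C}(\lVert D \rVert)$ rather than at $0$). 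One must first prove a shift identity, $\Delta_{X_{\bullet}}(D)=\Delta_{X_{\bullet}}(D-\nu C)+(\nu,0)$ with $\nu={\rm ord}_{C}(\lVert D \rVert)$, using that every section of $mD$ vanishes along $C$ to order at least $m\nu$, in order to reduce to the case $\nu=0$.

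Second, your proposed justification that $C \nsubseteq \mathbf{B}_{+}(D)$ ``implies the analogous non-containment for $D-tC$'' points in the wrong direction: base loci only grow as $t$ increases along the ray $t \mapsto D-tC$, so non-containment for $D$ gives no formal control over $D-tC$. What is actually needed (and is true with no hypothesis on $D$) is that for every $t$ with $\nu \le t < \mu$, writing $D-tC=P_{t}+N_{t}$, the curve $C$ is not a component of $N_{t}$ and $(P_{t}.C)>0$, equivalently $C \nsubseteq \mathbf{B}_{+}(D-tC)={\rm Null}(P_{t})$. This requires a separate rigidity argument about Zariski decompositions: if $C$ were a component of $N_{t_{0}}$, then by uniqueness $P_{t_{0}}+\bigl(N_{t_{0}}+(t_{0}-t)C\bigr)$ is the Zariski decomposition of $D-tC$ for every $t \le t_{0}$, which forces ${\rm ord}_{C}(\lVert D \rVert)={\rm ord}_{C}(N_{t_{0}})+t_{0}>t_{0}$, contradicting $t_{0} \ge \nu$; and if $(P_{t_{0}}.C)=0$, then because $P_{t_{0}}|_{C}$ has degree zero, imposing vanishing along $C$ costs sections of $mP_{t_{0}}$ at most one dimension per unit of multiplicity, so ${\rm vol}(D-tC) \ge {\rm vol}(P_{t_{0}}-(t-t_{0})C) \ge (P_{t_{0}})^{2}>0$ for all $t>t_{0}$, contradicting $\mu \le (D.A)/(C.A)<\infty$ for an ample $A$. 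Without these two facts the slice-by-slice identification --- the core of your proof --- is unjustified at every $t$ in the relevant range; with them, your remaining discussion of piecewise linearity via \cite{BKS} and the endpoint/closure issues is the right way to finish.
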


\end{exmp}

\begin{rmk}
As you can see in the proof of \cite[Theorem 6.4]{RM}, the main idea is the existence of Zariski decompositions on a surface. However, it is not the case in higher dimensions. Thus, it is difficult to know the general expressions of Okounkov bodies in higher dimensional cases. 
\end{rmk}

The following example is the simple case in higher dimensions. For notations, see Notation \ref{notation:usual}.

\begin{exmp} (Threefold $X$ with $\overline{{\rm Eff}}(X)={\rm Nef}(X)$) \label{exmp:homogeneous}
\

Let $X$ be a threefold with $\overline{{\rm Eff}}(X)={\rm Nef}(X)$. Then, the big cone and the ample cone coincide. Let $X_{\bullet}:X \supset S \supset C \supset \{p\}$ be an admissible flag, where $S$ satisfies ${\overline{{\rm Eff}}(S)}={\rm Nef}(S)$. Then, \cite[Corollary 3.2]{KLM} says that for any big divisor $D$ (in this case, $D$ is ample),  
\begin{align*}
\Delta_{X_{\bullet}}(D)=&\{(x_{1},x_{2},x_{3}) \in \mathbb{R}^{3}| \text{ } 0 \le x_{1} \le \mu, \text{ } 0 \le x_{2} \le \mu_{x_{1}}, \text{ }0 \le x_{3} \le ({(D-x_{1}S)}|_{S}-x_{2}C.C)\},
\end{align*} 
where $\mu={\rm sup} \{t>0| D-tS \in {\rm Amp}(X)\}$, $\mu_{x_{1}}={\rm sup} \{\alpha>0|(D-x_{1}S)|_{S}-\alpha C \in {\rm Amp}(X)\}$. \

However, if ${\overline{{\rm Eff}}(S)}={\rm Nef}(X)$ does not hold, then by using \cite[Proposition 3.1]{KLM} with Example \ref{exmp:surface}, we can describe $\Delta_{X_{\bullet}}(D)$. 
\end{exmp}

\end{subsection}

\begin{subsection} {Mori dream spaces}

In this subsection, we briefly recall some definitions and results related to Mori dream spaces. 

\begin{defn}
Let $X$ be a normal projective variety. 
\begin{enumerate}[(1)]
\item A small $\mathbb{Q}$-factorial modification(SQM) of $X$ is a small birational map $\mu:X \dashrightarrow Y$ to another normal $\mathbb{Q}$-factorial projective variety $Y$. 
\item A contracting birational map is a birational map surjective in codimension one. 
\end{enumerate}
\end{defn}

\begin{defn} \label{defn:MDS}
A normal projective $\mathbb{Q}$-factorial variety $X$ is called a Mori dream space (MDS) if the following conditions hold. 
\begin{enumerate}[(1)]
\item ${\rm Pic}(X)_{\mathbb{Q}}={\rm N^{1}}(X)_{\mathbb{Q}}$.
\item ${\rm Nef}(X)$ is the affine hull of finitely many semi-ample divisors. 
\item There are finitely many small $\mathbb{Q}$-factorial modifications $f_{i}:X \dashrightarrow X_{i}$ such that each $X_{i}$ satisfies 1), 2) and ${\rm Mov}(X)= \bigcup(f_{i}^{*}({\rm Nef(X_{i})}))$. 
\end{enumerate}
\end{defn}

The following proposition is a basic fact relating to Mori dream spaces. See \cite[Proposition 1.11]{HK} for details. 

\begin{prop} \label{prop:basic}
Let $X$ be a Mori dream space. 
\begin{enumerate}[(1)]
\item The $f_{i}$ in Definition \ref{defn:MDS}-(3) are all the small $\mathbb{Q}$-factorial modifications of $X$. In particular, the identity of $X$ must appear among them.
\item There are finitely many contracting birational maps of $X$, $g_{i}:X \dashrightarrow Y_{i}$, with $Y_{i}$ a Mori dream space, such that 
\begin{align*}
{\rm Eff}(X) = \bigcup_{i}(g_{i}^{*}({\rm Nef(Y_{i})}) * {\rm Ex}(g_{i}))
\end{align*}
gives a decomposition of ${\rm Eff}(X)$ into closed rational polyhedral subcones (which we call them Mori chambers) with disjoint interiors, where ${\rm Ex}(g_{i})$ denotes the cone spanned by the irreducible components of the exceptional locus of $g_{i}$ and $*$ denotes the join. 
\end{enumerate}
\end{prop}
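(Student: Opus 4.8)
The plan is to deduce both parts from the finiteness built into Definition \ref{defn:MDS}, following \cite[Proposition 1.11]{HK}; the engine throughout is that every section ring $\bigoplus_{m}H^{0}(X,mD)$ is finitely generated, so each effective divisor has a well-behaved Iitaka model obtained as a ${\rm Proj}$. For part (1) I would exploit the rigidity of small $\mathbb{Q}$-factorial modifications. Let $g:X \dashrightarrow Y$ be any SQM; since $g$ is an isomorphism in codimension one it identifies ${\rm N^{1}}(X)_{\mathbb{R}}$ with ${\rm N^{1}}(Y)_{\mathbb{R}}$ and ${\rm Mov}(X)$ with ${\rm Mov}(Y)$, and it carries ${\rm Nef}(Y)$ --- which has nonempty interior because $Y$ is projective and $\mathbb{Q}$-factorial --- into ${\rm Mov}(X)$. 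Choosing an ample class $A$ on $Y$, the class $g^{*}A$ lies in the interior of ${\rm Mov}(X)$ and hence in the interior of some chamber $f_{i}^{*}({\rm Nef}(X_{i}))$. The induced small birational map $h=f_{i}\circ g^{-1}:Y \dashrightarrow X_{i}$ then matches $A$ with a class in the interior of ${\rm Nef}(X_{i})$, i.e. with an ample class on $X_{i}$; since a small birational map between $\mathbb{Q}$-factorial projective varieties taking an ample class to an ample class must be an isomorphism (both varieties being ${\rm Proj}$ of the common section ring of that class), we conclude $Y \cong X_{i}$. Applying this with $Y=X$ and $g$ the identity shows that the identity is among the $f_{i}$.

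For part (2) I would first enumerate the relevant contracting maps. For each SQM $f_{i}:X \dashrightarrow X_{i}$, condition (2) of Definition \ref{defn:MDS} makes ${\rm Nef}(X_{i})$ rational polyhedral with semi-ample generators, so each face of ${\rm Nef}(X_{i})$ determines a semi-ample fibration $X_{i}\to Z$; precomposing with $f_{i}$ produces a contracting birational map $g:X \dashrightarrow Z=:Y_{j}$, and there are only finitely many such by conditions (2) and (3). To each I attach the cone $g_{j}^{*}({\rm Nef}(Y_{j}))*{\rm Ex}(g_{j})$, whose first factor records the semi-ample directions surviving on $Y_{j}$ and whose second records the fixed, exceptional directions. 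To see that these cover ${\rm Eff}(X)$, I would assign to an effective class $D$ its model $Y={\rm Proj}\bigoplus_{m}H^{0}(X,mD)$ --- finitely generated since $X$ is a Mori dream space --- together with the associated Zariski--Fujita type decomposition $D=g^{*}M+E$, where $M$ is nef on $Y$ and $E$ is supported on the exceptional locus of $g$; this places $D$ in the chamber of $g$. Disjointness of interiors follows because the interior of a chamber consists of classes sharing a common Iitaka model and common fixed support, data that pin down $g_{j}$ uniquely, and rational polyhedrality follows from that of ${\rm Nef}(Y_{j})$ together with the finiteness of the fixed prime divisors spanning ${\rm Ex}(g_{j})$.

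The hard part will be making the cover exhaustive and the decomposition $D=g^{*}M+E$ canonical: one must show that the ${\rm Proj}$ model of an arbitrary effective class genuinely coincides with one of the finitely many $Y_{j}$ coming from the SQMs, that after passing to that model the movable part $M$ is actually semi-ample rather than merely movable, and that the fixed part lies in the span of finitely many prime divisors. This is precisely where the full Mori dream space hypothesis --- finite generation of the Cox ring, equivalently of all the section rings --- is indispensable. If the divisorial bookkeeping became unwieldy, the fallback route I would use is to present $X$ as a GIT quotient of ${\rm Spec}$ of its Cox ring by a torus and to import the wall-and-chamber structure from variation of GIT, identifying the Mori chambers with the GIT chambers and the maps $g_{j}$ with the induced morphisms between the corresponding quotients.
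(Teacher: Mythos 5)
The paper never proves this proposition: it is quoted as a known fact, with the proof deferred entirely to Hu--Keel \cite[Proposition 1.11]{HK}, so the real comparison is with their argument. Your part (1) reconstructs that argument essentially correctly: pull back an ample class under an arbitrary SQM $g:X\dashrightarrow Y$, locate it in some chamber $f_{i}^{*}({\rm Nef}(X_{i}))$, and conclude $Y\cong X_{i}$ by the rigidity statement that a small birational map matching ample classes is an isomorphism (both sides being ${\rm Proj}$ of the common section ring). One repair is needed: an interior point of ${\rm Mov}(X)$ need \emph{not} lie in the interior of any single chamber --- it can sit on a wall between two chambers --- so your step ``lies in the interior of ${\rm Mov}(X)$ and hence in the interior of some chamber'' is false as written. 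You must instead choose $A$ generically: $g^{*}({\rm Amp}(Y))$ is a nonempty open cone and the finitely many chamber boundaries are closed and nowhere dense, so some $g^{*}A$ lands in a set $f_{i}^{*}({\rm Amp}(X_{i}))$. With that fix, part (1) is sound.

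Part (2) has a genuine gap, which you partly flag yourself. The entire content of the statement is (a) that the model ${\rm Proj}\,\bigoplus_{m}H^{0}(X,\OO_{X}(mD))$ of an \emph{arbitrary} effective class coincides with one of the finitely many $Y_{j}$ manufactured from faces of the cones ${\rm Nef}(X_{i})$, (b) that $D$ then decomposes as $g_{j}^{*}M+E$ with $M$ semi-ample on $Y_{j}$ and $E$ spanned by $g_{j}$-exceptional primes, and (c) that each $Y_{j}$ is itself a Mori dream space --- note that (c) is part of the assertion being proved and your outline never addresses it, yet you silently use it (rational polyhedrality of ${\rm Nef}(Y_{j})$) for the polyhedrality of the chambers. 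You defer (a) and (b) to ``the hard part'' and propose, as a fallback, variation of GIT on ${\rm Spec}$ of the Cox ring; but that fallback is not a fallback --- it is Hu--Keel's actual proof, in which the Mori chambers are identified with the GIT chambers of the torus action on ${\rm Spec}$ of the Cox ring, and finiteness, disjointness of interiors, rational polyhedrality, and the MDS property of the $Y_{j}$ all come out of that identification. Note also that even your ``engine,'' finite generation of every section ring $\bigoplus_{m}H^{0}(X,\OO_{X}(mD))$, does not follow directly from Definition \ref{defn:MDS}; it already requires the Hu--Keel theorem equating the MDS conditions with finite generation of the Cox ring. So as it stands your proposal proves (1) and gives a correct-in-outline but incomplete sketch of (2): to finish, you must either carry out the VGIT identification or give direct arguments for (a), (b), and (c), none of which is routine from the definition alone.
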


Now, we recall definitions of a Zariski decomposition and related propositions of Mori dream spaces using Proposition \ref{prop:basic}. 

\begin{defn} \label{defn:Zariski decomposition}
Let $X$ be a normal projective variety and $D$ a pseudo-effective $\mathbb{Q}$-divisor on $X$. A Zariski decomposition of $D$ consists of a nef $\mathbb{Q}$-divisor $P$ and an effective $\mathbb{Q}$-divisor $N$ such that $D=P+N$ and for all sufficiently divisible positive integer $m$ with $mD$, $mP$ integral, the natural map 
\begin{align*}
H^{0}(X, \OO_{X}(mP)) \rightarrow H^{0}(X, \OO_{X}(mD))
\end{align*}
is an isomorphism, where the map is the multiplication of a canonical section of $\OO_{X}(mN)$. \\

Moreover, if $X$ is a Mori dream space, we say that $D=P+N$ is a Zariski decomposition of $D$ in terms of MDS if there exists a small $\mathbb{Q}$-factorial modification $f:X \dashrightarrow X'$ such that 
\begin{align*}
f^{c}_{*}D=f^{c}_{*}P+f^{c}_{*}N
\end{align*}
is a Zariski decomposition in the above sense, where $f^{c}_{*}$ is the cycle pushforward of codimension 1 cycles. 
\end{defn}

One of the most important features of a divisor on a smooth projective surface is that it has a Zariski decomposition. In general, in higher dimensions, we cannot say that. However, the following proposition says that in Mori dream spaces, we can say the similar one. See \cite[Proposition 2.13]{O} for details. 

\begin{prop} \label{prop:Zariski decomposition}
Let $X$ be a Mori dream space. Consider the decomposition of ${\rm Eff}(X)$ in Proposition \ref{prop:basic}. Then, for each chamber $C$, there exists a small $\mathbb{Q}$-factorial modification $f_{i}:X \dashrightarrow X_{i}$ of $X$ and two $\mathbb{Q}$-linear maps 
\begin{align*}
P, N:C \rightarrow {\rm Eff}(X)
\end{align*}
such that for any integral $D \in C$, $D=P(D)+N(D)$ gives a Zariski decomposition of $D$ as a divisor on $X_{i}$. \

Conversely, let $X$ be a $\mathbb{Q}$-factorial normal projective variety such that ${\rm Pic}(X)_{\mathbb{Q}} \cong {\rm N^{1}}(X)_{\mathbb{Q}}$. Assume that ${\rm Eff}(X)$ is decomposed into finitely many chambers $C$ on each of which there exists $\mathbb{Q}$-linear Zariski decompositions in the above sense, with positive parts semi-ample on some small $\mathbb{Q}$-factorial modifications of $X$. Then, $X$ is a Mori dream space.
\end{prop}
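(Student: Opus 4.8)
The final statement to be proved is Proposition \ref{prop:Zariski decomposition}, which has two directions. The forward direction asserts that for a Mori dream space $X$, within each Mori chamber $C$ of $\mathrm{Eff}(X)$ there is a single SQM $f_i : X \dashrightarrow X_i$ and $\mathbb{Q}$-linear maps $P, N : C \to \mathrm{Eff}(X)$ giving, for every integral $D \in C$, a Zariski decomposition $D = P(D) + N(D)$ realized on $X_i$. The converse recovers the MDS property from the existence of such linear Zariski decompositions with semi-ample positive parts.

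The plan for the forward direction is to unpack the chamber decomposition of Proposition \ref{prop:basic}-(2). Each chamber has the form $C = g_i^*(\mathrm{Nef}(Y_i)) * \mathrm{Ex}(g_i)$ for a contracting birational map $g_i : X \dashrightarrow Y_i$. First I would factor $g_i$ through an SQM: since $g_i$ is birational and surjective in codimension one, one can find a small $\mathbb{Q}$-factorial modification $f_i : X \dashrightarrow X_i$ so that the induced map $X_i \dashrightarrow Y_i$ is a genuine morphism contracting exactly the divisorial part recorded by $\mathrm{Ex}(g_i)$. Then, for $D \in C$, write its image $(f_i)^c_* D$ on $X_i$; the join structure of the chamber says precisely that this class splits as a pullback of a nef class from $Y_i$ (the positive part $P$) plus a non-negative combination of the exceptional divisors (the negative part $N$). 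The key point is that on $X_i$ the positive part is semi-ample (being pulled back from a nef, hence semi-ample, class on the MDS $Y_i$), which forces the isomorphism of section spaces in Definition \ref{defn:Zariski decomposition}: sections of $mP$ and of $mD$ agree because the extra divisors in $N$ are fixed components. Both $P$ and $N$ are $\mathbb{Q}$-linear because the join decomposition of the rational polyhedral cone $C$ into the nef-pullback face and the exceptional face is linear on $C$.

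For the converse, I would verify the three defining conditions of Definition \ref{defn:MDS} directly from the hypotheses. Condition (1), $\mathrm{Pic}(X)_{\mathbb{Q}} \cong \mathrm{N}^1(X)_{\mathbb{Q}}$, is assumed outright. For conditions (2) and (3), I would use that $\mathrm{Eff}(X)$ decomposes into finitely many chambers each carrying a $\mathbb{Q}$-linear Zariski decomposition with semi-ample positive part on some SQM. The semi-ampleness hypothesis on the positive parts is what upgrades the nef cone to being spanned by finitely many semi-ample classes (condition (2)), and the finiteness of chambers together with the SQMs realizing each positive part yields the finitely many SQMs whose nef cones cover $\mathrm{Mov}(X)$ (condition (3)). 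Concretely, the chamber whose negative part is zero is $\mathrm{Mov}(X)$ subdivided by the various $f_i^*(\mathrm{Nef}(X_i))$.

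The main obstacle I expect is the forward-direction claim that the positive part is genuinely semi-ample on $X_i$ \emph{and} that this yields the section-space isomorphism, rather than merely a numerical splitting. Producing the SQM $f_i$ that resolves the indeterminacy of $g_i$ into a morphism while remaining small requires the full strength of the MDS structure (Proposition \ref{prop:basic}-(1) guarantees all SQMs appear among the $f_i$), and one must check carefully that the divisorial contractions of $g_i$ are exactly captured by $\mathrm{Ex}(g_i)$ so that no positive-dimensional fixed part is lost. Verifying that $H^0(X_i, \mathcal{O}(mP)) \to H^0(X_i, \mathcal{O}(mD))$ is an isomorphism then amounts to showing $N$ consists precisely of the fixed components of $|mD|$ on $X_i$, which is where the semi-ampleness of $P$ does the essential work. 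Since this is the content of \cite[Proposition 2.13]{O}, I would follow that reference for these verifications rather than reprove them from scratch.
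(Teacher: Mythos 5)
The paper gives no proof of this proposition at all: it is quoted verbatim from Okawa, with the reader sent to \cite[Proposition 2.13]{O} for details, which is exactly the reference you defer to for the substantive verifications. Your outline (factoring each contracting map $g_{i}$ through an SQM, taking the positive part to be the pullback $g_{i}^{*}g^{c}_{i*}D$ of a nef, hence semi-ample, class on the Mori dream space $Y_{i}$, and checking the three MDS axioms for the converse) is consistent with that cited argument and with the explicit formulas $P_{D}=g^{*}g^{c}_{*}D$, $N_{D}=D-P_{D}$ that the paper itself records in the Note following Theorem \ref{thm:slices}.
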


\begin{rmk}
We call $C$ in Proposition \ref{prop:Zariski decomposition} as Mori chamber and ${\rm Eff}(X)=\bigcup C$ a Mori chamber decomposition.
\end{rmk}

\end{subsection}

\end{section}

\begin{section} {Slices of Okounkov bodies of Mori dream spaces}

When we deal with the Okounkov bodies in higher dimensions, the main obstruction is that we cannot ensure that every divisor has a Zariski decomposition. It does not happen in general since nef cone and movable cone are not the same in higher dimensions. This poses the difficulty in finding the restricted Okounkov bodies, which makes it hard to find the Okounkov bodies of a given divisor. \

However, in Mori dream spaces, although not all pseudo-effective divisors may have Zariski decompositions in the usual sense, Proposition \ref{prop:Zariski decomposition} implies that all the divisors have a decomposition that is similar with a Zariski decomposition. In this section, we analyze all the slices of Okounkov bodies of Mori dream spaces. As a byproduct, we obtain the description of Okounkov bodies of Mori dream threefolds. Now, we start it by fixing some notations. 

\begin{note}
In this paper, a divisor means an integral divisor. However, all the arguments in this section can be extended to $\mathbb{Q}$-divisors by the homogeneity of the Okounkov body. Moreover, by the continuity of the Okounkov body, we can extend them to $\mathbb{R}$-divisors. We leave the details to the reader. 
\end{note}

\begin{notation} \label{notation:Mori dream threefold}
Let $X$ be a Mori dream space of dimension $n$, $f_{i}:X \dashrightarrow X_{i}$ SQMs of $X$, $D$ a big divisor on $X$, and $X_{\bullet}:X=Y_{0} \supset Y_{1} \supset \cdots \supset Y_{n}=\{p\}$ an admissible flag with $Y_{1} \cap (\cup_{i \in I_{D}} {\rm Ud}(f_{i}))=\emptyset$, where $I_{D}$ is defined on Note \ref{note:set-up}. Finally, let ${\rm Eff}(X)=\bigcup_{i=1}^{r} C_{i}$ be a Mori chamber decomposition of ${\rm Eff}(X)$. 
\end{notation}

\begin{lem} \label{lem:birational}
Let $X$ be a normal, projective variety of dimension $n$ and $D$ a pseudo-effective divisor on $X$. Let $\mu:X' \rightarrow X$ be a projective birational morphism. Then, 
\begin{align*}
\Delta_{X_{\bullet}}^{\rm lim}(D)=\Delta_{X'_{\bullet}}^{\rm lim}(\mu^{*}D)
\end{align*}
for any admissible flag $X_{\bullet}:X_{0}=X \supset X_{1} \supset \cdots \supset X_{n}$ with $X_{n}$ not contained in the center of the exceptional locus of $\mu$, and the admissible flag $X'_{\bullet}$ induced by the strict transform of $X_{\bullet}$. 
\end{lem}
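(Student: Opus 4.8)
The plan is to reduce the equality of limiting Okounkov bodies to an equality of the underlying valuation semigroups, by showing that the valuation-like function $\nu_{X_\bullet}$ on $X$ and the valuation-like function $\nu_{X'_\bullet}$ on $X'$ agree after pulling back sections via $\mu$. First I would recall that, since $\mu$ is projective birational and $X_n$ (hence each $X_i$ near $X_n$) is not contained in the center of the exceptional locus, the pullback $\mu^*$ induces an isomorphism on global sections $H^0(X,\OO_X(mD)) \xrightarrow{\sim} H^0(X',\OO_{X'}(m\mu^*D))$ for all $m$; this is the projection formula together with $\mu_*\OO_{X'}=\OO_X$ and the fact that $\mu$ is an isomorphism over the generic points of the $X_i$. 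The strict transform flag $X'_\bullet$ has each $X'_i$ mapping birationally onto $X_i$ under $\mu$, and since the flag avoids the exceptional center, $\mu$ is an isomorphism in a neighborhood of the generic point of each $X_i$.

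The key computation is then that the iterated order-of-vanishing construction defining $\nu$ commutes with $\mu^*$. Concretely, for a section $s$ and its pullback $\mu^*s$, one checks step by step that $\nu_1(s)={\rm ord}_{X_1}(s)={\rm ord}_{X'_1}(\mu^*s)$ because $\mu$ is an isomorphism at the generic point of $X_1$, so the order of vanishing along $X_1$ and along its strict transform $X'_1$ coincide. After twisting down by the local equation and restricting, the restricted section on $X'_1$ is identified via the induced birational map $X'_1 \to X_1$ with the restricted section on $X_1$, and one repeats the argument inductively down the flag. The upshot is that $\nu_{X_\bullet}(s) = \nu_{X'_\bullet}(\mu^*s)$ for every nonzero section, so the graded semigroups $\Gamma(mD)$ on $X$ and $\Gamma(m\mu^*D)$ on $X'$ are identical, whence $\Delta_{X_\bullet}(D)=\Delta_{X'_\bullet}(\mu^*D)$ whenever $D$ is big.

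To pass to the limiting Okounkov bodies of a merely pseudo-effective $D$, I would fix a nef and big divisor $A$ on $X$ and apply the big case to each perturbation $D+\tfrac1m A$. Since $\mu^*A$ is nef and big on $X'$ (nefness and bigness are preserved under pullback by a birational morphism), the definition
\begin{align*}
\Delta_{X_\bullet}^{\rm lim}(D)=\bigcap_{m\in\NN}\Delta_{X_\bullet}(D+\tfrac1m A)
\end{align*}
together with the already-established equalities $\Delta_{X_\bullet}(D+\tfrac1m A)=\Delta_{X'_\bullet}(\mu^*D+\tfrac1m\mu^*A)$ gives the desired identity after intersecting over all $m$; I would note that the limiting body does not depend on the auxiliary choice of $A$, so using $\mu^*A$ on $X'$ is legitimate.

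The main obstacle I anticipate is the inductive step verifying that the twist-and-restrict operation is compatible with $\mu^*$ all the way down the flag, rather than just at the top stage. The subtlety is that after restricting to $X_1$ one is working on the possibly singular or modified variety $X'_1$ and must ensure the induced map $X'_1\to X_1$ remains an isomorphism near the generic points of $X_2,\dots,X_n$; this is exactly where the hypothesis that $X_n$ avoids the exceptional center is used, since it forces the entire tail of the flag to lie in the locus where $\mu$ is an isomorphism. Making this compatibility precise, including a careful choice of compatible local equations for $X_i$ and $X'_i$ so that the twisting sections correspond, is the technical heart of the argument; everything else is formal.
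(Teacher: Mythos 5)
Your proposal is correct and follows essentially the same route as the paper: identify $H^{0}(X,\OO_{X}(mD))$ with $H^{0}(X',\OO_{X'}(m\mu^{*}D))$ via normality and $\mu_{*}\OO_{X'}=\OO_{X}$, conclude equality of Okounkov bodies in the big case, and then pass to the limiting bodies by intersecting over the perturbations $D+\tfrac{1}{m}A$ using that $\mu^{*}A$ is again nef and big. The only difference is that you spell out the step the paper leaves implicit, namely that $\nu_{X_{\bullet}}(s)=\nu_{X'_{\bullet}}(\mu^{*}s)$ because the flag avoids the exceptional center, which is a worthwhile elaboration rather than a divergence.
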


\begin{proof}
Let $\mu:X' \rightarrow X$ be a projective birational morphism. Since $X_{n}$ is not contained in the center of the exceptional locus of $\mu$, we can define $X'_{\bullet}$ using strict transform of $X_{\bullet}$. First, let $D$ be a big divisor. Since $X$ is normal, $H^{0}(X, \OO_{X}(mD))=H^{0}(X', \OO_{X'}(\mu^{*}mD))$ for all $m \in \mathbb{N}$. Therefore, we conclude that $\Delta_{X_{\bullet}}(D)=\Delta_{X'_{\bullet}}(\mu^{*}D)$. \

In general, let $D$ be a pseudo-effective divisor, $A$ a nef and big divisor on $X$. Since $D+\frac{1}{m}A$ is big for all $m \in \mathbb{N}$,  
\begin{align*}
\Delta_{X_{\bullet}}^{\rm lim}(D)=\bigcap_{m \in \mathbb{N}} \Delta_{X_{\bullet}}(D+\frac{1}{m} A)=\bigcap_{m \in \mathbb{N}} \Delta_{X'_{\bullet}}(\mu^{*}D+\frac{1}{m} \mu^{*}A) = \Delta_{X'_{\bullet}}^{\rm lim}(\mu^{*}D).
\end{align*}`
\end{proof}

\begin{note} (Set-up and notation) \label{note:set-up} \\
Notation is as in Notation \ref{notation:Mori dream threefold}. Consider $\{t \in \mathbb{R}_{\ge 0} | D-tY_{1} \in \bar{C_{i}}\}$, denoted by $[\alpha_{i}, \beta_{i}]$. (Note : It is just one closed interval since each Mori chamber is convex.) Consider the elimination of indeterminacy of $f_{i}$, 
\begin{displaymath}
\xymatrix{ \tilde{X_{i}} \ar[dr]^{\tilde{f_{i}}} \ar[d]^{\phi_{i}} & \\
X \ar@{-->}[r]^{f_{i}} & X_{i}}
\end{displaymath}
Now, let $D_{t}:=D-tY_{1}$ for $t \in \mathbb{R}_{\ge 0}$, and $\tilde{X_{i}}_{\bullet}: \tilde{X_{i}}=\tilde{Y_{0}}^{i} \supset \tilde{Y_{1}}^{i} \supset \cdots \supset \tilde{Y_{n}}^{i}$ a flag induced by strict transforms of $X_{\bullet}$ using $\phi_{i}$ (It makes sense since $Y_{1} \cap (\cup_{i \in I_{D}} {\rm Ud}(f_{i}))=\emptyset$ holds). Let $\mu={\rm sup}\{t>0|\text{ }D-tY_{1} \in {\rm Big}(X)\}$. Let $D_{t}=P_{D_{t}}+N_{D_{t}}$ be its Zariski decomposition in terms of MDS, and $I_{D}:=\{i\text{ }|\text{ } D_{t} \in C_{i}\text{ for some $t \in [{\rm ord}_{Y_{1}}(\lVert D \lVert), \mu]$}\}$, where $\mu={\rm sup}\{t>0|\text{ }D-tY_{1} \in {\rm Big}(X)\}$. 
\end{note}

Before going on, we need some lemmas. 

\begin{lem} \label{lem:restricted pullback}
Notation is as in Notation \ref{notation:Mori dream threefold} and Note \ref{note:set-up}. Then, $\Delta_{X_{\bullet}|Y_{1}}(D_{t})=\Delta_{\tilde{X_{i}}_{\bullet}|\tilde{Y_{1}}^{i}}(\phi_{i}^{*}D_{t})$ for all $t \in [{\rm ord}_{Y_{1}}(\lVert D \lVert), \mu]$. 
\end{lem}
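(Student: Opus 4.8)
The plan is to observe that the only data entering either restricted Okounkov body are the restricted linear series to the boundary divisor ($Y_1$, resp.\ $\tilde{Y_1}^i$) and the induced subflag on it; since $\phi_i$ is an isomorphism near $Y_1$, the equality should follow formally from this local triviality together with the normality argument already used in Lemma~\ref{lem:birational}. Fix $i \in I_D$, so that the flag hypothesis gives $Y_1 \cap {\rm Ud}(f_i) = \emptyset$. First I would realize $\tilde{X_i}$ as (a smooth model dominating) the closure of the graph of $f_i$, so that $\phi_i \colon \tilde{X_i} \to X$ is a projective birational morphism that is an isomorphism over $X \setminus {\rm Ud}(f_i)$. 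As $Y_1$ avoids ${\rm Ud}(f_i)$, there is an open $U \subseteq X$ containing $Y_1$ over which $\phi_i$ is an isomorphism; hence $\tilde{Y_j}^i = \phi_i^{-1}(Y_j)$ for $j \ge 1$, the map $\phi_i|_{\tilde{Y_1}^i} \colon \tilde{Y_1}^i \to Y_1$ is an isomorphism identifying the subflag $\tilde{Y_1}^i \supset \cdots \supset \tilde{Y_n}^i$ with $Y_1 \supset \cdots \supset Y_n$, and $(\phi_i^* D_t)|_{\tilde{Y_1}^i} = (\phi_i|_{\tilde{Y_1}^i})^*(D_t|_{Y_1})$.

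Next I would compare the restricted complete linear series. Because $X$ is normal and $\phi_i$ is projective and birational, pullback gives isomorphisms $H^0(X, \OO_X(mD_t)) \cong H^0(\tilde{X_i}, \OO_{\tilde{X_i}}(\phi_i^* mD_t))$ for every $m$, exactly as in the big case treated in Lemma~\ref{lem:birational}. I would then check that these isomorphisms are compatible with restriction to the boundary divisor, i.e.\ that they sit in a commutative square with the two restriction maps and with the pullback isomorphism $(\phi_i|_{\tilde{Y_1}^i})^*$ on sections over $Y_1$ and $\tilde{Y_1}^i$. This identifies the restricted series $H^0(X|Y_1, \OO_X(mD_t))$ with $H^0(\tilde{X_i}|\tilde{Y_1}^i, \OO_{\tilde{X_i}}(\phi_i^* mD_t))$ for all $m$. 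Since the subflags correspond under the same isomorphism, the valuation vectors of corresponding sections coincide, so the two graded semigroups agree and therefore so do the two restricted Okounkov bodies. For $t$ in the open interval $({\rm ord}_{Y_1}(\lVert D \rVert), \mu)$ the divisor $D_t$ is big and the restricted bodies are defined directly; the endpoint $t = \mu$, where $D_t$ is only pseudo-effective, I would deduce by continuity (or via the limiting construction) from the dense interior.

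The step I expect to be the main obstacle is the first one: pinning down rigorously that the chosen resolution $\phi_i$ is an isomorphism over a neighborhood of $Y_1$ and that this isomorphism genuinely carries the strict-transform subflag and the restricted line bundle $\OO_{\tilde{Y_1}^i}(\phi_i^* D_t)$ onto the original data on $Y_1$. This is where the hypothesis $Y_1 \cap (\cup_{i \in I_D} {\rm Ud}(f_i)) = \emptyset$ is essential, and one must be careful that the model $\tilde{X_i}$ used to define $\tilde{X_i}_\bullet$ in Note~\ref{note:set-up} indeed avoids $Y_1$ in its exceptional locus. Once this local picture is secured, the remaining steps are the normality and projection-formula computation of Lemma~\ref{lem:birational} and the routine verification that restriction and the valuation $\nu$ commute with an isomorphism.
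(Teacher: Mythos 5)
Your proof is correct, but it follows a genuinely different route from the paper's. The paper's proof is two lines: it first applies Lemma~\ref{lem:birational} to get $\Delta_{X_{\bullet}}(D_{t})=\Delta_{\tilde{X_{i}}_{\bullet}}(\phi_{i}^{*}D_{t})$, and then identifies \emph{both} restricted bodies with the $x_{1}=0$ slices of these (equal) Okounkov bodies via \cite[Theorem 4.24]{RM} (i.e.\ Proposition~\ref{prop:restriction}), so that $\Delta_{X_{\bullet}|Y_{1}}(D_{t})=\Delta_{X_{\bullet}}(D_{t})_{x_{1}=0}=\Delta_{\tilde{X_{i}}_{\bullet}}(\phi_{i}^{*}D_{t})_{x_{1}=0}=\Delta_{\tilde{X_{i}}_{\bullet}|\tilde{Y_{1}}^{i}}(\phi_{i}^{*}D_{t})$. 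In particular, the paper never uses that $\phi_{i}$ is an isomorphism near $Y_{1}$ for this lemma; it only needs the strict-transform flag to be defined (the point $Y_{n}$ avoiding the exceptional center), at the cost of silently inheriting the hypotheses of \cite[Theorem 4.24]{RM} (that $Y_{1}\nsubseteq {\rm \bf{B}}_{+}$ of the relevant divisor and that the slice is nonempty) on both sides. You instead compare the two restricted graded linear series directly: the $H^{0}$-isomorphism from normality, the commutative square with the restriction maps, and the identification of flags under $\phi_{i}|_{\tilde{Y_{1}}^{i}}$. This is self-contained, avoids \cite[Theorem 4.24]{RM} altogether, and makes explicit where the hypothesis $Y_{1}\cap(\cup_{i\in I_{D}}{\rm Ud}(f_{i}))=\emptyset$ enters; it is essentially the same mechanism the paper itself uses later, in the proof of Proposition~\ref{prop:important lemma}, where the images of the restriction maps $\psi_{1},\psi_{2}$ are compared using disjointness of $\tilde{Y_{1}}^{i}$ from the exceptional divisors. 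Two caveats you already flag are worth keeping: the model $\tilde{X_{i}}$ must be taken so that its exceptional locus lies over ${\rm Ud}(f_{i})$ (e.g.\ the graph closure), exactly as the paper implicitly assumes in Proposition~\ref{prop:important lemma}; and the endpoint $t=\mu$, where $D_{t}$ is no longer big, needs the limiting/continuity argument you describe --- a point the paper's own proof glosses over.
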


\begin{proof}
By Lemma \ref{lem:birational}, we have $\Delta_{X_{\bullet}}(D_{t})=\Delta_{\tilde{X_{i}}_{\bullet}}(\phi_{i}^{*}D_{t})$. Therefore, by \cite[Theorem 4.24]{RM},  
\begin{align*}
\Delta_{X_{\bullet}|Y_{1}}(D_{t})=\Delta_{X_{\bullet}}(D_{t})_{x_{1}=0}=\Delta_{\tilde{X_{i}}_{\bullet}}(\phi_{i}^{*}D_{t})_{x_{1}=0}=\Delta_{\tilde{X_{i}}_{\bullet}|\tilde{Y_{1}}^{i}}(\phi_{i}^{*}D_{t}).
\end{align*}
\end{proof}

\begin{lem} \label{lem:restriction}
Notation is as in Notation \ref{notation:Mori dream threefold} and Note \ref{note:set-up}. Then, for each $i=1, \dots, r$, there exist the linear function $l_{i}(t)=(l^{1}_{i}(t), \dots, l^{n-1}_{i}(t))$ defined on each $[\alpha_{i}, \beta_{i}]$ such that
\begin{align*}
\Delta_{\tilde{X_{i}}_{\bullet}|\tilde{Y_{1}}^{i}}(\tilde{f_{i}}^{*}({f_{i}}_{*}^{c}D_{t}))=\Delta_{\tilde{X_{i}}_{\bullet}|\tilde{Y_{1}}^{i}}(\tilde{f_{i}}^{*}({f_{i}}_{*}^{c}P_{D_{t}}))+l_{i}(t).
\end{align*}
\end{lem}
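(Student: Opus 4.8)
The plan is to exploit the defining property of the Zariski decomposition in terms of MDS: for $t\in[\alpha_i,\beta_i]$ the divisor $D_t$ lies in $\bar{C_i}$, so on $X_i$ the pushforward $ {f_i}_*^c D_t = {f_i}_*^c P_{D_t} + {f_i}_*^c N_{D_t} $ is a genuine Zariski decomposition. Set $\tilde{N_{D_t}} := \tilde{f_i}^*({f_i}_*^c N_{D_t})$, an effective divisor on $\tilde{X_i}$, so that $\tilde{f_i}^*({f_i}_*^c D_t) - \tilde{f_i}^*({f_i}_*^c P_{D_t}) = \tilde{N_{D_t}}$. Because $\tilde{f_i}$ is birational and $X_i$ is normal, $H^0(\tilde{X_i}, m\tilde{f_i}^*(\,\cdot\,)) = H^0(X_i, m(\,\cdot\,))$, so multiplication by the canonical section $\sigma_m$ of $m\tilde{N_{D_t}}$ gives, for all sufficiently divisible $m$, an isomorphism $H^0(\tilde{X_i}, m\tilde{f_i}^*({f_i}_*^c P_{D_t})) \xrightarrow{\sim} H^0(\tilde{X_i}, m\tilde{f_i}^*({f_i}_*^c D_t))$.

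First I would pass to the restricted linear series on $\tilde{Y_1}^i$. Assuming $\sigma_m$ restricts to a nonzero section $\sigma_m|_{\tilde{Y_1}^i}$ (the canonical section of $m\tilde{N_{D_t}}|_{\tilde{Y_1}^i}$), I place the isomorphism above and the two restriction maps to $\tilde{Y_1}^i$ in a commutative square; since the top arrow is an isomorphism, comparing images shows the restricted complete linear series satisfy $W_m(\tilde{f_i}^*({f_i}_*^c D_t)) = \sigma_m|_{\tilde{Y_1}^i}\cdot W_m(\tilde{f_i}^*({f_i}_*^c P_{D_t}))$. Applying the valuation $\nu_{\tilde{Y_1}^i_\bullet}$ on $\tilde{Y_1}^i$ and its additivity, every valuation vector is shifted by the single vector $\nu_{\tilde{Y_1}^i_\bullet}(\sigma_m|_{\tilde{Y_1}^i}) = m\cdot \nu_{\tilde{Y_1}^i_\bullet}(\tilde{N_{D_t}}|_{\tilde{Y_1}^i})$. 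Dividing by $m$ and passing to the closed convex hull yields the asserted translation, with $l_i(t) := \nu_{\tilde{Y_1}^i_\bullet}(\tilde{N_{D_t}}|_{\tilde{Y_1}^i})$; this is the definition of $l_i$ referenced in Theorem \ref{thm:introduction}.

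It remains to verify that $l_i(t)$ is linear on $[\alpha_i,\beta_i]$. By Proposition \ref{prop:Zariski decomposition}, on the Mori chamber $C_i$ the negative-part map $N$ is $\mathbb{Q}$-linear, and since $t\mapsto D_t = D - tY_1$ is an affine path contained in $\bar{C_i}$ for $t\in[\alpha_i,\beta_i]$, the divisor $N_{D_t}$, hence ${f_i}_*^c N_{D_t}$, hence $\tilde{N_{D_t}}|_{\tilde{Y_1}^i}$, is an effective divisor with a fixed set of prime components whose coefficients are affine-linear in $t$. As each coordinate of $\nu_{\tilde{Y_1}^i_\bullet}$ is computed by the inductive procedure of taking an order of vanishing and then restricting—operations that are linear in the coefficients of the divisor—$l_i(t)$ is an affine-linear function of $t$, as required.

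The main obstacle is the restriction step in the second paragraph: one must ensure that $\tilde{Y_1}^i$ is disjoint from the $\phi_i$-exceptional locus and, crucially, is \emph{not} contained in the support of the negative part $\tilde{N_{D_t}}$, since otherwise $\sigma_m$ would restrict to zero and the translation identity would collapse. This is precisely where the flag hypothesis $Y_1 \cap (\cup_{i\in I_D}{\rm Ud}(f_i)) = \emptyset$ enters: as $f_i$ is small, the exceptional divisors of $\tilde{f_i}$ and the divisorial negative locus lie over ${\rm Ud}(f_i)$, so avoiding it forces $\phi_i$ to be an isomorphism near $\tilde{Y_1}^i \cong Y_1$ and makes $\tilde{Y_1}^i$ meet $\tilde{N_{D_t}}$ properly. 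Carefully checking this compatibility of supports, so that $\sigma_m|_{\tilde{Y_1}^i}$ genuinely is the canonical section of the restricted divisor, is the technical heart of the argument.
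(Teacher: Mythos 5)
Your proof follows essentially the same route as the paper's: both use the MDS Zariski decomposition to identify $H^{0}$ of multiples of the pulled-back full divisor and positive part via multiplication by the canonical section of the pulled-back negative part, restrict that identification to $\tilde{Y_{1}}^{i}$, read off the translation vector $l_{i}(t)$ as the flag valuation of the restricted canonical section, and deduce linearity from the $\mathbb{Q}$-linearity of $D \mapsto N_{D}$ on each Mori chamber.

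One caveat about your final paragraph: your justification that the hypothesis $Y_{1} \cap (\cup_{i \in I_{D}} {\rm Ud}(f_{i}))=\emptyset$ is what guarantees $\sigma_{m}|_{\tilde{Y_{1}}^{i}} \neq 0$ is not correct. Since each $f_{i}$ is small, ${\rm Ud}(f_{i})$ has codimension at least $2$, so the divisorial support of $\tilde{f_{i}}^{*}({f_{i}}_{*}^{c}N_{D_{t}})$ cannot ``lie over'' it; avoiding ${\rm Ud}(f_{i})$ says nothing about avoiding the negative part. The nonvanishing instead holds because the coefficient of $Y_{1}$ in $N_{D_{t}}$ equals ${\rm ord}_{Y_{1}}(\lVert D_{t} \rVert)$, which is zero precisely on the range $t \ge {\rm ord}_{Y_{1}}(\lVert D \rVert)$ where the lemma is applied, and $\tilde{Y_{1}}^{i}$ is neither $\tilde{f_{i}}$-exceptional nor the strict transform of a component of $N_{D_{t}}$. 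This is consistent with the paper's remark immediately following the lemma, which states that the flag hypothesis is not needed for this lemma at all and is only essential for Proposition \ref{prop:important lemma}.
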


\begin{proof}
Note that $X_{i}$ is normal. Also, since $\tilde{f_{i}}$ is birational and proper, by Zariski main theorem and Stein factorization, ${f_{i}}_{*}\OO_{\tilde{X_{i}}}=\OO_{X_{i}}$. Now, consider the following isomorphisms. 
\begin{align*}
H^{0}(\tilde{X_{i}}, \tilde{f_{i}}^{*}({f_{i}}_{*}^{c}P_{D_{t}})) \overset{g_{1}}{\underset{\cong}{\longrightarrow}} H^{0}(X_{i}, {f_{i}}_{*}^{c}P_{D_{t}}) \overset{g_{2}}{\underset{\cong}{\longrightarrow}} H^{0}(X_{i}, {f_{i}}_{*}^{c}D_{t}) \overset{g_{3}}{\underset{\cong}{\longrightarrow}} H^{0}(\tilde{X_{i}}, \tilde{f_{i}}^{*}({f_{i}}_{*}^{c}D_{t})).
\end{align*}

The middle isomorphism ($g_{2}$) comes from the Zariski decomposition property of a Mori dream space, and the first ($g_{1}$) and the last ($g_{3}$) ones come from the projection formula with $\tilde{f_{i}}_{*}\OO_{\tilde{X_{i}}}=\OO_{X_{i}}$: i.e.,
\begin{align*}
{f_{i}}_{*}^{c}D_{t}=\tilde{f_{i}}_{*}\OO_{\tilde{X_{i}}} \otimes {f_{i}}_{*}^{c}D_{t} \rightarrow \tilde{f_{i}}_{*}(\OO_{\tilde{X_{i}}} \otimes \tilde{f_{i}}^{*}({f_{i}}_{*}^{c}D_{t}))=\tilde{f_{i}}_{*}\tilde{f_{i}}^{*}({f_{i}}_{*}^{c}D_{t})),
\end{align*}
which is just the pull-back section map when we take global sections. Thus, 
\begin{align*}
\tilde{f_{i}}^{*}s \overset{g_{1}}{\longmapsto} s \overset{g_{2}}{\longmapsto} s \otimes s_{N_{D_{t}}} \overset{g_{3}}{\longmapsto} \tilde{f_{i}}^{*}s \otimes \tilde{f_{i}}^{*}s_{N_{D_{t}}},
\end{align*} 
where $s_{N_{D_{t}}} \in H^{0}(X_{i}, {f_{i}}_{*}^{c}N_{D_{t}})$ is the canonical section. It also holds when we consider $m\tilde{f_{i}}^{*}({f_{i}}_{*}^{c}D_{t})$, $m{f_{i}}_{*}^{c}D_{t}$, $m{f_{i}}_{*}^{c}P_{D_{t}}$, and $m\tilde{f_{i}}^{*}({f_{i}}_{*}^{c}P_{D_{t}})$ by taking $s_{N_{D_{t}}}^{\otimes m} \in H^{0}(X_{i}, m{f_{i}}_{*}^{c}N_{D_{t}})$ for $m \ge 1$.  Therefore, since pull-back functor commutes with $\otimes$, for $m \ge 1$,  
\begin{align*}
&H^{0}(\tilde{X_{i}}|\tilde{Y_{1}}^{i}, m\tilde{f_{i}}^{*}({f_{i}}_{*}^{c}P_{D_{t}}))=\{(\tilde{f_{i}}^{*}s)|_{\tilde{Y_{1}}^{i}} \text{ } | \text{ } s \in H^{0}(X_{i}, m{f_{i}}_{*}^{c}P_{D_{t}})\}, \\
&H^{0}(\tilde{X_{i}}|\tilde{Y_{1}}^{i}, m\tilde{f_{i}}^{*}({f_{i}}_{*}^{c}D_{t}))=\{(\tilde{f_{i}}^{*}s)|_{\tilde{Y_{1}}^{i}} \otimes (\tilde{f_{i}}^{*}s_{N_{D_{t}}})^{\otimes m}|_{\tilde{Y_{1}}^{i}} \text{ } | \text{ } s \in H^{0}(X_{i}, m{f_{i}}_{*}^{c}P_{D_{t}})\}.
\end{align*} 

Now, let $l_{i}(t)=\nu_{\tilde{Y_{1}}^{i}_{\bullet}}((\tilde{f_{i}}^{*}s_{N_{D_{t}}})|_{\tilde{Y_{1}}^{i}})$. Since $\nu_{\tilde{Y_{1}}^{i}_{\bullet}}(s_{1} \otimes s_{2})=\nu_{\tilde{Y_{1}}^{i}_{\bullet}}(s_{1})+\nu_{\tilde{Y_{1}}^{i}_{\bullet}}(s_{2})$, we obtain the desired result except that $l_{i}(t)$ is linear on $[\alpha_{i}, \beta_{i}]$. However, the linearity comes from the linearity of $N_{D_{t}}$ in each Mori chamber (see Proposition \ref{prop:Zariski decomposition}) and the well-known fact that $s_{D}=s_{D_{1}} \otimes s_{D_{2}}$, where $s_{D}$, $s_{D_{i}}$ are the canonical sections of effective divisors $D$ and $D_{i}$ such that $D=D_{1}+D_{2}$, which proves the lemma. 
\end{proof}

\begin{rmk}
In fact, we do not have to assume `$Y_{1} \cap (\cup_{i \in I_{D}} {\rm Ud}(f_{i}))= \emptyset$' in Lemma \ref{lem:restriction}. However, this assumption is essential for Proposition \ref{prop:important lemma}. 
\end{rmk}

\begin{prop} \label{prop:important lemma}
Notation is as in Notation \ref{notation:Mori dream threefold} and Note \ref{note:set-up}. Then,  
\begin{align*}
\Delta_{\tilde{X_{i}}_{\bullet}}(\phi_{i}^{*}D_{t})=\Delta_{\tilde{X_{i}}_{\bullet}}(\tilde{f_{i}}^{*}({f_{i}}_{*}^{c}D_{t}))
\end{align*}
for each $i=1, \cdots r$. 
\end{prop}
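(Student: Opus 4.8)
The plan is to work entirely on the resolution $\tilde{X_{i}}$ and to show that the two line bundles $\phi_{i}^{*}D_{t}$ and $\tilde{f_{i}}^{*}({f_{i}}_{*}^{c}D_{t})$, though distinct, have canonically isomorphic spaces of global sections on which the valuation $\nu_{\tilde{X_{i}}_{\bullet}}$ takes identical values. This forces $\Gamma(\phi_{i}^{*}D_{t})_{m}=\Gamma(\tilde{f_{i}}^{*}({f_{i}}_{*}^{c}D_{t}))_{m}$ for every $m$, and hence equality of the Okounkov bodies by Definition \ref{defn:definition of Okounkov bodies}.

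First I would set up a common open locus. Since $f_{i}$ is an SQM it is an isomorphism in codimension one, so $U:=X\setminus{\rm Ud}(f_{i})$ has complement of codimension $\ge 2$ and $f_{i}|_{U}$ is an isomorphism onto its image $U_{i}\subseteq X_{i}$. Both $\phi_{i}$ and $\tilde{f_{i}}$ restrict to isomorphisms over $U$ and $U_{i}$, identifying a common open $\tilde{U}\subseteq\tilde{X_{i}}$ with $U\cong U_{i}$. The hypothesis of Notation \ref{notation:Mori dream threefold} gives $Y_{1}\cap{\rm Ud}(f_{i})=\emptyset$ for $i\in I_{D}$ (which is where the content lies), so $Y_{1}\subseteq U$ and therefore $Y_{j}\subseteq U$ for all $j\ge 1$; consequently the entire flag $\tilde{Y_{1}}^{i}\supseteq\cdots\supseteq\tilde{Y_{n}}^{i}$ lies inside $\tilde{U}$, where $\phi_{i}^{*}D_{t}$ and $\tilde{f_{i}}^{*}({f_{i}}_{*}^{c}D_{t})$ are canonically identified with $D_{t}|_{U}$.

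Next I would match the section spaces. Exactly as in the proof of Lemma \ref{lem:restriction}, the projection formula together with $\phi_{i*}\OO_{\tilde{X_{i}}}=\OO_{X}$ and $\tilde{f_{i}}_{*}\OO_{\tilde{X_{i}}}=\OO_{X_{i}}$ gives $H^{0}(\tilde{X_{i}},m\phi_{i}^{*}D_{t})=H^{0}(X,mD_{t})$ and $H^{0}(\tilde{X_{i}},m\tilde{f_{i}}^{*}({f_{i}}_{*}^{c}D_{t}))=H^{0}(X_{i},m{f_{i}}_{*}^{c}D_{t})$, both via pullback. Because $f_{i}$ is small and $X,X_{i}$ are normal, restriction to $U$ and unique extension across the codimension $\ge 2$ complement yields $H^{0}(X,mD_{t})=H^{0}(X_{i},m{f_{i}}_{*}^{c}D_{t})$. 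Composing, a section $\phi_{i}^{*}s$ corresponds to $\tilde{f_{i}}^{*}s'$, where $s$ and $s'$ agree under $f_{i}|_{U}$; in particular $\phi_{i}^{*}s$ and $\tilde{f_{i}}^{*}s'$ restrict to the same section over $\tilde{U}$.

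Finally I would compare the valuations. Since $\nu_{\tilde{X_{i}}_{\bullet}}$ is computed by iterated orders of vanishing and restrictions along the flag members $\tilde{Y_{j}}^{i}$, and these all lie in $\tilde{U}$ where the two sections and the two line bundles coincide, each coordinate satisfies $\nu_{k}(\phi_{i}^{*}s)=\nu_{k}(\tilde{f_{i}}^{*}s')$, so inductively $\nu_{\tilde{X_{i}}_{\bullet}}(\phi_{i}^{*}s)=\nu_{\tilde{X_{i}}_{\bullet}}(\tilde{f_{i}}^{*}s')$. Thus the two valuative images agree for every $m$ and the Okounkov bodies are equal. The main obstacle is this last step: one must verify that the inductive restriction procedure never leaves $\tilde{U}$, and that, although $\phi_{i}^{*}D_{t}$ and $\tilde{f_{i}}^{*}({f_{i}}_{*}^{c}D_{t})$ are a priori sections of different line bundles, their agreement on $\tilde{U}$ propagates through each successive restriction to $\tilde{Y_{k}}^{i}$. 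It is precisely here that the flag-avoidance assumption $Y_{1}\cap{\rm Ud}(f_{i})=\emptyset$ is indispensable, as emphasized in the remark following Lemma \ref{lem:restriction}.
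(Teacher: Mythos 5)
Your proof is correct, and it rests on the same geometric core as the paper's: both arguments exploit the fact that $\phi_{i}^{*}D_{t}$ and $\tilde{f_{i}}^{*}({f_{i}}_{*}^{c}D_{t})$, together with their sections, are canonically identified over the common isomorphic locus $\tilde{U}$, and that the flag-avoidance hypothesis forces the entire flag $\tilde{Y_{1}}^{i}\supset\cdots\supset\tilde{Y_{n}}^{i}$ to lie inside $\tilde{U}$. Where you genuinely diverge is in how the equality of the bodies is extracted. The paper never compares full valuation vectors: it shows the two strict transforms coincide, deduces that the images of the two restriction maps $\psi_{1},\psi_{2}$ to $H^{0}(\tilde{Y_{1}}^{i},\cdot)$ agree, concludes equality of the \emph{restricted} bodies $\Delta_{\tilde{X_{i}}_{\bullet}|\tilde{Y_{1}}^{i}}$ for every $t$, and then reassembles the full Okounkov bodies slice by slice using the identification of slices with restricted bodies (Proposition \ref{prop:restriction}, i.e.\ \cite[Theorem 4.24]{RM}). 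You instead work directly at the level of graded valuation semigroups: the chain $H^{0}(\tilde{X_{i}},m\phi_{i}^{*}D_{t})\cong H^{0}(X,mD_{t})\cong H^{0}(X_{i},m{f_{i}}_{*}^{c}D_{t})\cong H^{0}(\tilde{X_{i}},m\tilde{f_{i}}^{*}({f_{i}}_{*}^{c}D_{t}))$ (projection formula plus Hartogs extension across the codimension $\ge 2$ complement of a small map) matches up sections that agree on $\tilde{U}$, and since every step of the inductive computation of $\nu_{\tilde{X_{i}}_{\bullet}}$ happens inside $\tilde{U}$, the images $\Gamma(\cdot)_{m}\subset\mathbb{Z}^{n}$ coincide outright, whence the bodies do. Your route is more elementary and slightly stronger: it uses only the definition of the Okounkov body, avoids the restriction theorem (whose hypothesis $\tilde{Y_{1}}^{i}\nsubseteq \mathbf{B}_{+}$ the paper's slicing step implicitly requires), and shows that the valuation semigroups themselves, not merely their convex hulls, agree. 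What the paper's route buys is that its intermediate output, the equality $\Delta_{\tilde{X_{i}}_{\bullet}|\tilde{Y_{1}}^{i}}(\phi_{i}^{*}D_{t})=\Delta_{\tilde{X_{i}}_{\bullet}|\tilde{Y_{1}}^{i}}(\tilde{f_{i}}^{*}({f_{i}}_{*}^{c}D_{t}))$, is exactly the form in which the proposition is invoked (together with Lemma \ref{lem:restricted pullback}) in the proof of Theorem \ref{thm:slices}; if one adopts your proof, that restricted-body statement should still be recorded, though it follows at once from your semigroup equality by restricting to sections not vanishing along $\tilde{Y_{1}}^{i}$. One shared imprecision worth flagging: the hypothesis only controls ${\rm Ud}(f_{i})$ for $i\in I_{D}$, so, like the paper, your argument really establishes the claim for those indices, which is all that is ever used downstream.
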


\begin{proof}
Note that $Y_{1} \cap (\cup_{i \in I_{D}} {\rm Ud}(f_{i}))=\emptyset$. Let $U \subset X$ and $U_{i} \subset X_{i}$ be isomorphic loci of $\phi_{i}, f_{i}$, and $\tilde{f_{i}}$. Now, consider strict transforms of $\tilde{f_{i}}^{*}({f_{i}}_{*}^{c}D_{t})$ and $\phi_{i}^{*}D_{t}$, denoted by $\tilde{f_{i}}_{*}^{-1}({f_{i}^{c}}_{*}D_{t})$ and ${\phi_{i}}_{*}^{-1}D_{t}$, respectively. Note that 
\begin{align*}
\tilde{f_{i}}_{*}^{-1}({f_{i}^{c}}_{*}D_{t})&=\overline{\tilde{f_{i}}^{-1}(\overline{f_{i}(D_{t} \cap U)} \cap U_{i})}=\overline{\phi_{i}^{-1} \circ f_{i}^{-1}(\overline{f_{i}(D_{t} \cap U)} \cap U_{i})} \\
&=\overline{\phi_{i}^{-1} \circ f_{i}^{-1}(f_{i}(D_{t} \cap U) \cap U_{i})}=\overline{\phi_{i}^{-1}(D_{t} \cap U)}. 
\end{align*}
Also, since ${\phi_{i}}_{*}^{-1}D_{t}=\overline{\phi_{i}^{-1}(D_{t} \cap U)}$, we obtain that their strict transforms are the same. By the assumption, $\tilde{f_{i}}^{*}({f_{i}}_{*}^{c}D_{t})|_{\tilde{Y_{1}}^{i}} = {\phi_{i}}^{*}D_{t}|_{\tilde{Y_{1}}^{i}}$. 
Thus, we obtain the following diagram 
\begin{displaymath}
\xymatrix{ H^{0}(\tilde{X_{i}}, \phi_{i}^{*}D_{t}) \ar[r]^{\psi_{1}} & H^{0}(\tilde{Y_{1}}^{i}, \phi_{i}^{*}D_{t}|_{\tilde{Y_{1}}^{i}}) \ar@{=}[d] \\
H^{0}(\tilde{X_{i}}, \tilde{{f_{i}}}^{*}({f_{i}}_{*}^{c}D_{t})) \ar[r]^{\psi_{2}} & H^{0}(\tilde{Y_{1}}^{i}, \tilde{f_{i}}^{*}({f_{i}}_{*}^{c}D_{t})|_{\tilde{Y_{1}}^{i}}), }
\end{displaymath}
where $\psi_{1}$ and $\psi_{2}$ are restriction maps. Since the strict transforms of $\tilde{{f_{i}}}^{*}({f_{i}}_{*}^{c}D_{t})$ and $\phi_{i}^{*}D_{t}$ are exactly the same, and $\tilde{Y_{1}}^{i} \cap E = \emptyset$ for any exceptional divisor $E$ by the assumption $Y_{1} \cap (\cup_{i \in I_{D}} {\rm Ud}(f_{i}))=\emptyset$, the images of $\psi_{1}$ and $\psi_{2}$ are the same. Thus, we obtain $\Delta_{\tilde{X_{i}}_{\bullet}|\tilde{Y_{1}}^{i}}(\phi_{i}^{*}D_{t})=\Delta_{\tilde{X_{i}}_{\bullet}|\tilde{Y_{1}}^{i}}(\tilde{f_{i}}^{*}({f_{i}}_{*}^{c}D_{t}))$ for each $t=t_{i} \in [\alpha_{i}, \beta_{i}]$. Then, for any $t \in [{\rm ord}_{\tilde{Y_{1}}}(\lVert \phi_{i}^{*}D \lVert), \mu]=[{\rm ord}_{\tilde{Y_{1}}}(\lVert \tilde{f_{i}}^{*}({f_{i}}^{c}_{*}D) \lVert), \mu]$, we obtain that 
\begin{align*}
{\Delta_{\tilde{X_{i}}_{\bullet}}(\phi_{i}^{*}D)}_{x_{1}=t}&=\Delta_{\tilde{X_{i}}_{\bullet}|\tilde{Y_{1}}^{i}}(\phi_{i}^{*}D-t \tilde{Y_{1}}^{i})=\Delta_{\tilde{X_{i}}_{\bullet}|\tilde{Y_{1}}^{i}}(\phi_{i}^{*}(D-t Y_{1})) \\
&=\Delta_{\tilde{X_{i}}_{\bullet}|\tilde{Y_{1}}^{i}}(\tilde{f_{i}}^{*}{f_{i}}^{c}_{*}(D-t Y_{1}))=\Delta_{\tilde{X_{i}}_{\bullet}|\tilde{Y_{1}}^{i}}(\tilde{f_{i}}^{*}{f_{i}}^{c}_{*}D-t \tilde{Y_{1}}^{i}) \\
&={\Delta_{\tilde{X_{i}}_{\bullet}}(\tilde{f_{i}}^{*}{f_{i}}^{c}_{*}D)}_{x_{1}=t}.
\end{align*}
Thus, $\Delta_{\tilde{X_{i}}_{\bullet}}(\phi_{i}^{*}D)=\Delta_{\tilde{X_{i}}_{\bullet}}(\tilde{f_{i}}^{*}{f_{i}}^{c}_{*}D)$.
\end{proof}

By Proposition \ref{prop:important lemma}, we can obtain the following corollary, which is a special case of \cite[Theorem C]{KL2}. 

\begin{cor}
Let $X$ be a Mori dream space of dimension $n$, $X_{\bullet}:X=Y_{0} \supset Y_{1} \supset \cdots \supset Y_{n}$ an admissible flag, and $D$ a big divisor on $X$ such that $D \in C_{i}$, where $C_{i}$ is a Mori chamber. Assume that $Y_{1} \cap {\rm Ud}(f_{i})=\emptyset$, where $f_{i}$ is a SQM of $X$ corresponding to $C_{i}$. Then, 
\begin{align*}
\Delta_{X_{\bullet}}(D)=\Delta_{X_{\bullet}}(P_{D})+a
\end{align*}
for some vector $a \in \mathbb{R}^{n}$, where $P_{D}$ is the positive part of the Zariski decomposition of $D$ in terms of MDS. 
\end{cor}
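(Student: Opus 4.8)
The plan is to move everything up to the resolution $\tilde{X_{i}}$, where ${f_{i}}_{*}^{c}D$ acquires a genuine Zariski decomposition, and then to exploit the canonical section of its negative part. First I would combine Lemma \ref{lem:birational} with Proposition \ref{prop:important lemma}, applied to $D$ itself (the case $t=0$ of Note \ref{note:set-up}), to get
$$\Delta_{X_{\bullet}}(D)=\Delta_{\tilde{X_{i}}_{\bullet}}(\phi_{i}^{*}D)=\Delta_{\tilde{X_{i}}_{\bullet}}(\tilde{f_{i}}^{*}({f_{i}}_{*}^{c}D)),$$
and, applying the same two results to the positive part $P_{D}$,
$$\Delta_{X_{\bullet}}(P_{D})=\Delta_{\tilde{X_{i}}_{\bullet}}(\phi_{i}^{*}P_{D})=\Delta_{\tilde{X_{i}}_{\bullet}}(\tilde{f_{i}}^{*}({f_{i}}_{*}^{c}P_{D})).$$
The hypothesis $Y_{1}\cap{\rm Ud}(f_{i})=\emptyset$ is precisely what Proposition \ref{prop:important lemma} requires, and since it constrains only $f_{i}$ and not the chosen divisor, it is legitimate to invoke the proposition for $P_{D}$ as well. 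This reduces the claim to showing that the two Okounkov bodies on $\tilde{X_{i}}$ differ by a translation.

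Next I would reuse the chain of isomorphisms appearing in the proof of Lemma \ref{lem:restriction}, but now for the full spaces of sections rather than their restrictions to $\tilde{Y_{1}}^{i}$. Because ${f_{i}}_{*}^{c}D={f_{i}}_{*}^{c}P_{D}+{f_{i}}_{*}^{c}N_{D}$ is an honest Zariski decomposition on $X_{i}$, multiplication by the canonical section $s_{N_{D}}$ of ${f_{i}}_{*}^{c}N_{D}$ furnishes, for every $m\ge 1$, an isomorphism
$$H^{0}(\tilde{X_{i}},m\tilde{f_{i}}^{*}({f_{i}}_{*}^{c}P_{D}))\overset{\cong}{\longrightarrow}H^{0}(\tilde{X_{i}},m\tilde{f_{i}}^{*}({f_{i}}_{*}^{c}D)),\quad s'\mapsto s'\otimes(\tilde{f_{i}}^{*}s_{N_{D}})^{\otimes m}.$$
Applying the additivity property (3) of the valuation-like function then gives $\nu_{\tilde{X_{i}}_{\bullet}}(s'\otimes(\tilde{f_{i}}^{*}s_{N_{D}})^{\otimes m})=\nu_{\tilde{X_{i}}_{\bullet}}(s')+m\,a$, where $a:=\nu_{\tilde{X_{i}}_{\bullet}}(\tilde{f_{i}}^{*}s_{N_{D}})\in\mathbb{R}^{n}$ is a fixed vector. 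Hence $\frac{1}{m}\Gamma(\tilde{f_{i}}^{*}({f_{i}}_{*}^{c}D))_{m}=\frac{1}{m}\Gamma(\tilde{f_{i}}^{*}({f_{i}}_{*}^{c}P_{D}))_{m}+a$ for all $m$, and passing to the closed convex hull of the union over $m$ yields
$$\Delta_{\tilde{X_{i}}_{\bullet}}(\tilde{f_{i}}^{*}({f_{i}}_{*}^{c}D))=\Delta_{\tilde{X_{i}}_{\bullet}}(\tilde{f_{i}}^{*}({f_{i}}_{*}^{c}P_{D}))+a.$$
Combining this with the two displays of the first paragraph gives $\Delta_{X_{\bullet}}(D)=\Delta_{X_{\bullet}}(P_{D})+a$, as claimed.

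I expect the main difficulty to be careful bookkeeping rather than a genuinely new idea. One must verify that the section-multiplication isomorphism is compatible with the full valuation $\nu_{\tilde{X_{i}}_{\bullet}}$ uniformly in $m$, so that after rescaling by $\frac{1}{m}$ the shift is a single constant $a$ independent of $m$; this is the same mechanism that drives Lemma \ref{lem:restriction}, the only difference being that here I track all $n$ coordinates of $\nu$ instead of the restricted valuation. The other point to be checked is that $P_{D}$ (which is big, being the nef positive part of the big divisor ${f_{i}}_{*}^{c}D$) indeed falls within the scope of Proposition \ref{prop:important lemma}; once this is granted, transporting the translation identity from $\tilde{X_{i}}$ back to $X$ is immediate.
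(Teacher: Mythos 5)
Your proposal follows the paper's own proof essentially step for step: both reduce to $\tilde{X_{i}}$ via Lemma \ref{lem:birational} and Proposition \ref{prop:important lemma} (applied to $D$ and to $P_{D}$, exactly as you argue is legitimate), and both obtain the translation vector $a=\nu_{\tilde{X_{i}}_{\bullet}}(\tilde{f_{i}}^{*}s_{N_{D}})$ by rerunning the section-multiplication argument from the proof of Lemma \ref{lem:restriction} on the full section spaces. The only detail you gloss over is that $P_{D}$ and $N_{D}$ are in general only $\mathbb{Q}$-divisors, so the canonical section power $(\tilde{f_{i}}^{*}s_{N_{D}})^{\otimes m}$ exists only for sufficiently divisible $m$; the paper disposes of this by proving the integral case first and then passing to a multiple $mD$, $mP_{D}$, $mN_{D}$ via homogeneity of Okounkov bodies.
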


\begin{proof}
We use notations in Notation \ref{notation:Mori dream threefold} and Note \ref{note:set-up}. First, assume that $D$, $P_{D}$, and $N_{D}$ are all integral. By Proposition \ref{prop:important lemma}, $\Delta_{\tilde{X_{i}}_{\bullet}}(\phi_{i}^{*}D)=\Delta_{\tilde{X_{i}}_{\bullet}}(\tilde{f_{i}}^{*}{f_{i}}^{c}_{*}D)$. Let $a:=\nu_{{\tilde{X_{i}}}_{\bullet}}(\tilde{f_{i}}^{*}s_{N_{D}}) \in \mathbb{R}^{n}$ for the canonical section $s_{N_{D}} \in H^{0}(X_{i}, {f_{i}^{c}}_{*}N_{D})$. By using the same argument in the proof of Lemma \ref{lem:restriction}, we can obtain $\Delta_{\tilde{X_{i}}_{\bullet}}(\tilde{f_{i}}^{*}{f_{i}}^{c}_{*}D)=\Delta_{\tilde{X_{i}}_{\bullet}}(\tilde{f_{i}}^{*}{f_{i}}^{c}_{*}P_{D})+a$. Then, by Lemma \ref{lem:birational}, $\Delta_{X_{\bullet}}(D)=\Delta_{\tilde{X_{i}}_{\bullet}}(\phi_{i}^{*}D)=\Delta_{\tilde{X_{i}}_{\bullet}}(\tilde{f_{i}}^{*}{f_{i}}^{c}_{*}D)=\Delta_{\tilde{X_{i}}_{\bullet}}(\tilde{f_{i}}^{*}{f_{i}}^{c}_{*}P_{D})+a=\Delta_{\tilde{X_{i}}_{\bullet}}(\phi_{i}^{*}P_{D})+a=\Delta_{X_{\bullet}}(P_{D})+a$. \

In general, assume that $D$, $P_{D}$, and $N_{D}$ are $\mathbb{Q}$-divisors. Choose $m>>0$ such that $mD$, $mP_{D}$, and $mN_{D}$ are all integral. Then, by using the same argument as in the integral case to $mD$, $mP_{D}$, and $mN_{D}$, we obtain the desired result. 
\end{proof}

Now, we are ready to prove our main theorem. 

\begin{thm} \label{thm:slices}
Let $X$, $X_{\bullet}$, $D$, $I_{D}$, and $[\alpha_{i}, \beta_{i}]$ be as in Notation \ref{notation:Mori dream threefold} and Note \ref{note:set-up}. Then, for each $i=1, \dots, r$, there exist the linear function $l_{i}(t)=(l_{i}^{1}(t), \cdots, l_{i}^{n-1}(t))$ defined on each $[\alpha_{i}, \beta_{i}]$ such that 
\begin{align*}
\Delta_{X_{\bullet}}(D)_{x_{1}=t}=\Delta_{{Y_{1}}_{\bullet}}(P_{D_{t}}|_{Y_{1}})+l_{i}(t)
\end{align*} 
for all $t=t_{i} \in [\alpha_{i}, \beta_{i}]$. 
\end{thm}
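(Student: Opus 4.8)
The plan is to chain the results already established in this section so as to rewrite the slice $\Delta_{X_{\bullet}}(D)_{x_{1}=t}$ as a restricted Okounkov body of the \emph{positive part} $P_{D_{t}}$ pulled back to $\tilde{X_{i}}$, and then to convert that restricted body into an honest Okounkov body on $Y_{1}$. Throughout I fix an index $i$ and a value $t=t_{i}\in[\alpha_{i},\beta_{i}]$, so that $D_{t}=D-tY_{1}\in\overline{C_{i}}$ and $i\in I_{D}$; the flag hypothesis then gives $Y_{1}\cap{\rm Ud}(f_{i})=\emptyset$, which is exactly what makes the strict transforms well defined and forces $\phi_{i}$ and $\tilde{f_{i}}$ to restrict to isomorphisms on $\tilde{Y_{1}}^{i}$.

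First I would record that $Y_{1}\not\subseteq\mathbf{B}_{+}(D)$ (for $D\in C_{i}$ the augmented base locus is controlled by the exceptional locus of the corresponding contraction, which $Y_{1}$ avoids by the flag hypothesis), so Proposition \ref{prop:restriction} identifies the slice with a restricted body,
\[
\Delta_{X_{\bullet}}(D)_{x_{1}=t}=\Delta_{X_{\bullet}|Y_{1}}(D-tY_{1})=\Delta_{X_{\bullet}|Y_{1}}(D_{t}).
\]
Next, Lemma \ref{lem:restricted pullback} transports this to the resolution $\tilde{X_{i}}$, the restricted-body identity proved inside Proposition \ref{prop:important lemma} replaces $\phi_{i}^{*}D_{t}$ by $\tilde{f_{i}}^{*}({f_{i}}_{*}^{c}D_{t})$, and finally Lemma \ref{lem:restriction} strips off the negative part at the cost of the translation $l_{i}(t)$. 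Chaining these four equalities gives
\[
\Delta_{X_{\bullet}}(D)_{x_{1}=t}=\Delta_{\tilde{X_{i}}_{\bullet}|\tilde{Y_{1}}^{i}}\bigl(\tilde{f_{i}}^{*}({f_{i}}_{*}^{c}P_{D_{t}})\bigr)+l_{i}(t).
\]

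It remains to prove that the restricted body on the right equals the full Okounkov body $\Delta_{{Y_{1}}_{\bullet}}(P_{D_{t}}|_{Y_{1}})$. Here I would use that $P_{D_{t}}$ is the positive part of the MDS Zariski decomposition, so by Proposition \ref{prop:Zariski decomposition} the class ${f_{i}}_{*}^{c}P_{D_{t}}$ is semi-ample and big on $X_{i}$, whence $L:=\tilde{f_{i}}^{*}({f_{i}}_{*}^{c}P_{D_{t}})$ is semi-ample and big on $\tilde{X_{i}}$. Writing $L=g^{*}H$ for the birational ample model $g:\tilde{X_{i}}\to Z$ with $H$ ample on $Z$, Lemma \ref{lem:birational} gives $\Delta_{\tilde{X_{i}}_{\bullet}}(L)=\Delta_{Z_{\bullet}}(H)$ for the pushed-forward flag, and Proposition \ref{prop:restricted ample}, now legitimately in the ample case, converts the slice at $x_{1}=0$ into the Okounkov body of a restriction on the image of $\tilde{Y_{1}}^{i}$ in $Z$. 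Since $\tilde{Y_{1}}^{i}$ avoids the exceptional loci it is not contracted by $g$, and unwinding the isomorphisms $\tilde{Y_{1}}^{i}\cong Y_{1}$ together with ${f_{i}}_{*}^{c}P_{D_{t}}|_{\cdot}\cong P_{D_{t}}|_{Y_{1}}$ yields $\Delta_{\tilde{X_{i}}_{\bullet}|\tilde{Y_{1}}^{i}}(L)=\Delta_{{Y_{1}}_{\bullet}}(P_{D_{t}}|_{Y_{1}})$, which completes the identification.

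The main obstacle is precisely this last step: Proposition \ref{prop:restricted ample} is available only for ample divisors, while $L$ is merely semi-ample and big, so I must pass to the ample model and then verify that $\tilde{Y_{1}}^{i}$ is disjoint from the contracted locus (equivalently that $\tilde{Y_{1}}^{i}\not\subseteq\mathbf{B}_{+}(L)$). This is exactly where the hypothesis $Y_{1}\cap(\cup_{i\in I_{D}}{\rm Ud}(f_{i}))=\emptyset$ does its real work, and care is needed to transport the admissible flag correctly through $g$ so that Lemma \ref{lem:birational} applies. By contrast, the linearity of $l_{i}(t)$ on $[\alpha_{i},\beta_{i}]$ requires no new argument, as it is already furnished by Lemma \ref{lem:restriction} via the linearity of $N_{D_{t}}$ on the Mori chamber $C_{i}$. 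Assembling the displayed equalities proves the theorem for integral $t$, and the general statement follows by the homogeneity and continuity of the Okounkov body recorded at the start of the section.
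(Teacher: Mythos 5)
Your first four equalities --- slicing via Proposition \ref{prop:restriction}, transporting to $\tilde{X_{i}}$ via Lemma \ref{lem:restricted pullback}, exchanging $\phi_{i}^{*}D_{t}$ for $\tilde{f_{i}}^{*}({f_{i}}_{*}^{c}D_{t})$ via the restricted-body identity inside Proposition \ref{prop:important lemma}, and stripping off the negative part via Lemma \ref{lem:restriction} --- are exactly the paper's chain. The divergence, and the gap, is in your final step, where you must show
\begin{align*}
\Delta_{\tilde{X_{i}}_{\bullet}|\tilde{Y_{1}}^{i}}\bigl(\tilde{f_{i}}^{*}({f_{i}}_{*}^{c}P_{D_{t}})\bigr)=\Delta_{{Y_{1}}_{\bullet}}(P_{D_{t}}|_{Y_{1}}).
\end{align*}
You route this through the ample model $g:\tilde{X_{i}}\to Z$ of the semi-ample and big divisor $L:=\tilde{f_{i}}^{*}({f_{i}}_{*}^{c}P_{D_{t}})$, and your key claim is that since $\tilde{Y_{1}}^{i}$ ``avoids the exceptional loci it is not contracted by $g$,'' after which you ``unwind the isomorphisms.'' This conflates two different loci. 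The flag hypothesis $Y_{1}\cap(\cup_{i\in I_{D}}{\rm Ud}(f_{i}))=\emptyset$ only guarantees that $\tilde{Y_{1}}^{i}$ avoids the $\phi_{i}$- and $\tilde{f_{i}}$-exceptional loci; the locus contracted by $g$ is the null locus of $L$ (equivalently $\mathbf{B}_{+}(L)$), and nothing in the hypotheses prevents the nef divisor $P_{D_{t}}|_{Y_{1}}$ from having null curves, i.e.\ curves inside $\tilde{Y_{1}}^{i}$ that $g$ contracts. In that case $g|_{\tilde{Y_{1}}^{i}}$ is not an isomorphism onto its image, and the unwinding step breaks down; note that ``not contracted'' (which is all that bigness of $L|_{\tilde{Y_{1}}^{i}}$ buys you) is strictly weaker than what you need. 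A second defect of this route: $Z$ and the images of the flag members may be singular, so the pushed-forward flag need not be admissible in the sense of Definition \ref{defn:admissible flag}; then $\Delta_{Z_{\bullet}}(H)$ is not defined in this framework, and Lemma \ref{lem:birational} --- stated for strict transforms of admissible flags with the flag point away from the center of the exceptional locus --- does not apply as invoked.

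The paper avoids the ample model entirely. Since $L$ is nef, it obtains the identity
$\Delta_{\tilde{X_{i}}_{\bullet}|\tilde{Y_{1}}^{i}}(L)=\Delta_{\tilde{Y_{1}}^{i}_{\bullet}}(L|_{\tilde{Y_{1}}^{i}})$
as a limit of the ample case: for an ample $A$ one has $\Delta_{\tilde{X_{i}}_{\bullet}|\tilde{Y_{1}}^{i}}(L+\epsilon A)=\Delta_{\tilde{Y_{1}}^{i}_{\bullet}}\bigl((L+\epsilon A)|_{\tilde{Y_{1}}^{i}}\bigr)$ by Proposition \ref{prop:restricted ample}, and one lets $\epsilon\to 0$ using the continuity of (restricted) Okounkov bodies; this requires no hypothesis whatsoever on the null locus of $L$. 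It then identifies $L|_{\tilde{Y_{1}}^{i}}=(\phi_{i}|_{\tilde{Y_{1}}^{i}})^{*}(P_{D_{t}}|_{Y_{1}})$ from the commutative square $\tilde{f_{i}}=f_{i}\circ\phi_{i}$ (legitimate precisely because $Y_{1}$ misses ${\rm Ud}(f_{i})$), and concludes using that $\phi_{i}|_{\tilde{Y_{1}}^{i}}:\tilde{Y_{1}}^{i}\to Y_{1}$ is an isomorphism, so the two Okounkov bodies agree. If you replace your ample-model detour with this continuity argument, the rest of your proof goes through as written.
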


\begin{proof}
Let $f_{i}$ be SQMs of $X$. Fix $t=t_{i} \in [\alpha_{i}, \beta_{i}]$. By Lemma \ref{lem:restriction} and Proposition \ref{prop:important lemma}, we obtain  
\begin{align*}
\Delta_{X_{\bullet}}(D)_{x_{1}=t}&=\Delta_{X_{\bullet}|Y_{1}}(D_{t})=\Delta_{\tilde{X_{i}}_{\bullet}|\tilde{Y_{1}}^{i}}(\phi_{i}^{*}D_{t})=\Delta_{\tilde{X_{i}}_{\bullet}|\tilde{Y_{1}}^{i}}(\tilde{f_{i}}^{*}({f_{i}}_{*}^{c}D_{t})) \\
&=\Delta_{\tilde{X_{i}}_{\bullet}|\tilde{Y_{1}}^{i}}(\tilde{f_{i}}^{*}({f_{i}}_{*}^{c}P_{D_{t}}))+l_{i}(t)=\Delta_{\tilde{Y_{1}}^{i}_{\bullet}}(\tilde{f_{i}}^{*}({f_{i}}_{*}^{c}P_{D_{t}})|_{\tilde{Y_{1}}^{i}})+l_{i}(t),
\end{align*}
where the second equality holds by Lemma \ref{lem:restricted pullback}, and the last equality comes from the continuity of the restricted Okounkov bodies and Proposition \ref{prop:restricted ample} since $\tilde{f_{i}}^{*}({f_{i}}_{*}^{c}P_{D_{t}})$ is nef. Now, consider the following commutative diagram. 
\begin{center}
\begin{displaymath}
\xymatrix{
\tilde{Y_{1}}^{i} \ar@{^{(}->}[r]^{i_{\tilde{Y_{1}}^{i}}} \ar@{->}[d]_{\phi_{i}|_{\tilde{Y_{1}}^{i}}} &
\tilde{X_{i}} \ar@{->}[d]^{\phi_{i}} \\
Y_{1} \ar@{^{(}->}[r]^{i_{Y_{1}}} & X }
\end{displaymath}
\end{center}
Since $Y_{1} \cap (\cup_{i \in I_{D}} {\rm Ud}(f_{i}))=\emptyset$, note that 
\begin{align*}
(\tilde{f_{i}}^{*}({f_{i}}_{*}^{c}P_{D_{t}}))|_{\tilde{Y_{1}}^{i}}&=i_{\tilde{Y_{1}}^{i}}^{*}(\tilde{f_{i}}^{*}({f_{i}}_{*}^{c}P_{D_{t}}))=i_{\tilde{Y_{1}}^{i}}^{*}(\phi_{i}^{*} f_{i}^{*}({f_{i}}_{*}^{c}P_{D_{t}})) \\
&=(\phi_{i}|_{\tilde{Y_{1}}^{i}})^{*}(f_{i}^{*}({f_{i}}_{*}^{c}P_{D_{t}}))|_{Y_{1}})=(\phi_{i}|_{\tilde{Y_{1}}^{i}})^{*}(P_{D_{t}}|_{Y_{1}}).
\end{align*}
Thus, $\Delta_{\tilde{Y_{1}}^{i}_{\bullet}}(\tilde{f_{i}}^{*}({f_{i}}_{*}^{c}P_{D_{t}})|_{\tilde{Y_{1}}^{i}})=\Delta_{{\tilde{Y_{1}}^{i}}_{\bullet}}((\phi_{i}|_{\tilde{Y_{1}}^{i}})^{*}(P_{D_{t}}|_{Y_{1}}))=\Delta_{{Y_{1}}_{\bullet}}(P_{D_{t}}|_{Y_{1}})$. Therefore, $\Delta_{X_{\bullet}}(D)_{x_{1}=t}=\Delta_{{Y_{1}}_{\bullet}}(P_{D_{t}}|_{Y_{1}})+l_{i}(t)$. 
\end{proof}

\begin{note} 
We give an expression of a Zariski decomposition of big divisors in terms of MDS (see the proof of \cite[Proposition 2.13]{O} for details). Let $g:X \rightarrow Y$ be contracting birational maps in Proposition \ref{prop:basic}-(2). Then, for any big divisor $D$ on $X$,  $P_{D}=g^{*}g^{c}_{*}D$ and $N_{D}=D-P_{D}$.
\end{note}

\begin{rmk} \label{rmk:extension} In this remark, we observe the meaning of Theorem \ref{thm:slices} and its extension to pseudo-effective divisors. 
\begin{enumerate} 
\item Theorem \ref{thm:slices} says that the problem on the descriptions of the Okounkov bodies of big divisors on $X$ associated to $X_{\bullet}$ is reduced to that on $Y_{1}$ associated to ${Y_{1}}_{\bullet}$.  
\item Theorem \ref{thm:slices} can be extended to the limiting Okounkov bodies of pseudo-effective divisors naturally. We leave the details to the reader. 
\item The first condition (${\rm Pic}(X)_{\mathbb{Q}}={\rm N^{1}}(X)_{\mathbb{Q}}$) in Definition \ref{defn:MDS} is not essential for Theorem \ref{thm:slices}. In fact, Theorem \ref{thm:slices} holds for any variety such that every big divisor has a decomposition in Proposition \ref{prop:Zariski decomposition}. 
\end{enumerate}
\end{rmk}

\begin{rmk} (Computations of Mori chamber decomposition)
For Theorem \ref{thm:slices}, we need to know what Mori chamber decomposition of a given $\overline{{\rm Eff}}(X)$ is. For computations of Mori chamber decomposition, see \cite[Chapter 3]{K}. 
\end{rmk}

As a byproduct of Theorem \ref{thm:slices}, we obtain the descriptions of the Okounkov bodies of big divisors on Mori dream threefolds. 

\begin{cor} \label{cor:Mori dream space}
Let $X$ be a Mori dream threefold, and $X_{\bullet}:X \supset S \supset C \supset \{p\}$ an admissible flag with $S \cap (\cup_{i \in I_{D}} {\rm Ud}(f_{i}))=\emptyset$, where $D$, $I_{D}$, and $[\alpha_{i}, \beta_{i}]$ are as in Notation \ref{notation:Mori dream threefold} and Note \ref{note:set-up}. Then, for each $i=1, \dots, r$, there exist the linear function $l_{i}(t)=(l_{i}^{1}(t), l_{i}^{2}(t))$ defined on each $[\alpha_{i}, \beta_{i}]$ such that 
\begin{align*}
\Delta_{X_{\bullet}}(D)=&\{(x_{1},x_{2},x_{3}) \in \mathbb{R}^{3}| \text{ }\text{\rm ord}_{S}(\lVert D \lVert) \le x_{1} \le \mu, \text{ for each } x_{1}=t_{i} \in [\alpha_{i}, \beta_{i}], \\
& l_{i}^{1}(t_{i}) \le x_{2} \le \mu_{t_{i}}+l_{i}^{1}(t_{i}), \text{ } \delta_{x_{2}}(t_{i}) \le x_{3} \le \delta_{x_{2}}(t_{i})+(P_{(P_{D_{t_{i}}}|_{S}-x_{2}C)}.C)\},
\end{align*}
where $\mu={\rm sup}\{t>0|\text{ }D-tS \in {\rm Big}(X)\}$, $\mu_{t}={\rm sup}\{\alpha>0|\text{ }P_{D_{t}}|_{S}-\alpha C \in {\rm Big}(S)\}$, $P_{D_{t_{i}}}|_{S}-x_{2}C=P_{(P_{D_{t_{i}}}|_{S}-x_{2}C)}+N_{(P_{D_{t_{i}}}|_{S}-x_{2}C)}$ is the Zariski decomposition in the usual sense, and $\delta_{x_{2}}(t_{i})={\rm ord}_{p}(N_{(P_{D_{t_{i}}}|_{S}-x_{2}C)}|_{C})+l_{i}^{2}(t_{i})$ is a linear function on each $[\alpha_{i}, \beta_{i}]$ for fixed $x_{2} \in [l_{i}^{1}(t_{i}), \mu_{t_{i}}+l_{i}^{1}(t_{i})]$.
\end{cor}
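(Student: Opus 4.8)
The plan is to derive Corollary \ref{cor:Mori dream space} by specializing Theorem \ref{thm:slices} to the threefold case ($n=3$, $Y_1 = S$, $Y_2 = C$, $Y_3 = \{p\}$) and then inserting the known surface formula (Example \ref{exmp:surface}) for the Okounkov bodies on the surface $S$. First I would invoke Theorem \ref{thm:slices} to write, for each $t = t_i \in [\alpha_i, \beta_i]$,
\begin{align*}
\Delta_{X_{\bullet}}(D)_{x_1 = t} = \Delta_{S_{\bullet}}(P_{D_t}|_{S}) + l_i(t),
\end{align*}
where $S_{\bullet} : S \supset C \supset \{p\}$ is the induced flag and $l_i(t) = (l_i^1(t), l_i^2(t))$ is the linear function from the proof of Lemma \ref{lem:restriction}. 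This reduces the entire problem to understanding $\Delta_{S_{\bullet}}(P_{D_t}|_{S})$, which is an Okounkov body of a big divisor on the surface $S$.

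The key step is then to apply the surface description from Example \ref{exmp:surface} to the big divisor $P_{D_t}|_{S}$ on $S$ with the flag $S_{\bullet} : S \supset C \supset \{p\}$. This gives
\begin{align*}
\Delta_{S_{\bullet}}(P_{D_t}|_{S}) = \{ (x_2', x_3') \mid \text{ord}_C(\lVert P_{D_t}|_{S} \rVert) \le x_2' \le \mu_t,\
\alpha \le x_3' \le \alpha + (C . P_{(P_{D_t}|_S - x_2' C)}) \},
\end{align*}
where $\mu_t = \sup\{ \alpha > 0 \mid P_{D_t}|_{S} - \alpha C \in \mathrm{Big}(S)\}$, the second-coordinate floor $\alpha = \text{ord}_p(N_{(P_{D_t}|_S - x_2' C)}|_C)$, and $P_{(P_{D_t}|_S - x_2' C)} + N_{(P_{D_t}|_S - x_2' C)}$ is the usual Zariski decomposition on the surface $S$. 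Translating by $l_i(t)$ shifts the $x_2$-coordinate by $l_i^1(t)$ and the $x_3$-coordinate by $l_i^2(t)$: a point $(x_2', x_3')$ in $\Delta_{S_{\bullet}}(P_{D_t}|_S)$ becomes $(x_2' + l_i^1(t),\, x_3' + l_i^2(t))$. Substituting $x_2 = x_2' + l_i^1(t)$ and $x_3 = x_3' + l_i^2(t)$ converts the surface bounds into the stated inequalities, with $\delta_{x_2}(t_i) = \text{ord}_p(N_{(P_{D_{t_i}}|_S - x_2 C)}|_C) + l_i^2(t_i)$ absorbing the vertical shift, and the lower $x_2$-bound $\text{ord}_C(\lVert P_{D_t}|_{S}\rVert)$ reindexing to $l_i^1(t_i)$.

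I would then check the two pieces of bookkeeping that make the statement internally consistent. First, the outer range $\text{ord}_S(\lVert D \rVert) \le x_1 \le \mu$ is exactly the interval of $t$ on which $D - tS$ is big, which is the union of the intervals $[\alpha_i, \beta_i]$ from Note \ref{note:set-up}; the floor $\text{ord}_S(\lVert D\rVert)$ is the asymptotic valuation governing where the slices become nonempty, matching the surface case. Second, the horizontal width $\mu_{t_i}$ of the slice must agree after translation: since translating by $l_i^1(t_i)$ does not change the length of the $x_2$-interval, the slice runs from $l_i^1(t_i)$ to $\mu_{t_i} + l_i^1(t_i)$, so $\text{ord}_C(\lVert P_{D_t}|_S\rVert) + l_i^1(t_i)$ must coincide with $l_i^1(t_i)$, i.e.\ the asymptotic valuation of $P_{D_t}|_S$ along $C$ vanishes; this follows because $P_{D_t}|_S$ is already the restriction of a nef positive part, so $C$ meets it in the movable regime and contributes no fixed part. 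The linearity of $\delta_{x_2}(t_i)$ in $t_i$ on each $[\alpha_i, \beta_i]$ is inherited from the linearity of $l_i^2(t)$ (Lemma \ref{lem:restriction}) together with the linearity of the Zariski chamber decomposition on $S$ as $t$ varies within a single Mori chamber, via the linearity property of Zariski chambers of \cite{BKS} already used in Example \ref{exmp:surface}.

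The main obstacle I expect is justifying that the negative part $N_{(P_{D_{t_i}}|_S - x_2 C)}$ on the surface, and hence $\delta_{x_2}(t_i)$, varies linearly jointly in $(t_i, x_2)$ across the relevant region: one must verify that as $t_i$ ranges over a fixed Mori chamber interval $[\alpha_i, \beta_i]$ of $X$ and $x_2$ ranges over $[l_i^1(t_i), \mu_{t_i} + l_i^1(t_i)]$, the two-parameter family $P_{D_{t_i}}|_S - x_2 C$ stays within a single Zariski chamber of the surface $S$ (or decomposes into finitely many linearity pieces), so that $\text{ord}_p(N_{(\cdot)}|_C)$ is genuinely linear in $t_i$ for each fixed $x_2$. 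This requires combining the linearity of $t \mapsto P_{D_t}$ within a Mori chamber of $X$ (Proposition \ref{prop:Zariski decomposition}) with the linearity of Zariski chambers on $S$, and carefully confirming that restriction to $S$ sends a Mori-chamber-linear family of positive parts to a Zariski-chamber-linear family on the surface; this compatibility of the two chamber structures under restriction is the delicate point.
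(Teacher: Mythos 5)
Your proposal is correct and takes essentially the same approach as the paper: invoke Theorem \ref{thm:slices} to write $\Delta_{X_{\bullet}}(D)_{x_{1}=t}=\Delta_{S_{\bullet}}(P_{D_{t}}|_{S})+l_{i}(t)$ and then substitute the surface description of Example \ref{exmp:surface} for $\Delta_{S_{\bullet}}(P_{D_{t}}|_{S})$, translating by $l_{i}(t)$. If anything you are more careful than the paper's own two-line proof, which silently takes the slice to be $0 \le x_{2} \le \mu_{t}$ (i.e.\ uses ${\rm ord}_{C}(\lVert P_{D_{t}}|_{S} \rVert)=0$, valid because $P_{D_{t}}|_{S}$ is nef under the flag hypothesis) and never addresses the linearity of $\delta_{x_{2}}(t_{i})$ that you rightly flag as the delicate point.
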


\begin{proof}
By Theorem \ref{thm:slices}, $\Delta_{X_{\bullet}}(D)_{x_{1}=t}=\Delta_{{S}_{\bullet}}(P_{D_{t}}|_{S})+l_{i}(t)$, where $D_{t}:=D-tS$. We are reduced to the surface case. By Example \ref{exmp:surface},  
\begin{align*}
\Delta_{S_{\bullet}}(P_{D_{t}}|_{S})=&\{(x_{2},x_{3}) \in \mathbb{R}^{2}|\text{ } 0 \le x_{2} \le \mu_{t}, \\
& {\rm ord}_{p}(N_{(P_{D_{t}}|_{S}-x_{2}C)}|_{C}) \le x_{3} \le {\rm ord}_{p}(N_{(P_{D_{t}}|_{S}-x_{2}C)}|_{C})+(P_{(P_{D_{t}}|_{S}-x_{2}C)}.C) \}, 
\end{align*}
where $\mu_{t}:={\rm sup}\{\alpha>0|\text{ }P_{D_{t}}|_{S}-\alpha C \in {\rm Big}(S)\}$ and $N_{(P_{D_{t}}|_{S}-x_{2}C)}$ is the negative part of $P_{D_{t}}|_{S}-x_{2}C$. Therefore, by summarizing them, we obtain the desired result. 
\end{proof}

\begin{caution} \label{caution:caution}
The description of $\Delta_{X_{\bullet}}(D)$ in Corollary \ref{cor:Mori dream space} may not holds without the assumption `$S \cap (\cup_{i \in I_{D}} {\rm Ud}(f_{i}))=\emptyset$'. For a counterexample, see Example \ref{exmp:blowing-up}.
\end{caution}

Corollary \ref{cor:Mori dream space} can be extended to the limiting Okounkov bodies of pseudo-effective divisors. Since the proof is similar to that of Corollary \ref{cor:Mori dream space}, we omit it here. For descriptions of the limiting Okounkov bodies of a surface, see \cite[Theorem 4.5]{CPW1}.

\begin{cor} \label{cor:pseudo-effective divisor}
Let $X$, $X_{\bullet}$, and $[\alpha_{i},\beta_{i}]$ be as in Corollary \ref{cor:Mori dream space}. Let $D$ be a psuedo-effective divisor, and $I_{D}:=\{i\text{ }|\text{ } D_{t} \in C_{i}\text{ for some $t \in [{\rm ord}_{S}(\lVert D \lVert), \mu]$}\}$, where $\mu={\rm sup}\{t>0|\text{ }D-tS \in {\rm Eff}(X)\}$. Then, for each $i=1, \dots, r$, there exist the linear function $l_{i}(t)=(l_{i}^{1}(t), l_{i}^{2}(t))$ defined on each $[\alpha_{i}, \beta_{i}]$ such that 
\begin{align*}
\Delta_{X_{\bullet}}^{\rm lim}(D)=&\{(x_{1},x_{2},x_{3}) \in \mathbb{R}^{3}| \text{ }\text{\rm ord}_{S}(\lVert D \lVert) \le x_{1} \le \mu, \text{ for each } x_{1}=t_{i} \in [\alpha_{i}, \beta_{i}], \\
& l_{i}^{1}(t_{i}) \le x_{2} \le \mu_{t_{i}}+l_{i}^{1}(t_{i}), \text{ } \delta_{x_{2}}(t_{i}) \le x_{3} \le \delta_{x_{2}}(t_{i})+(P_{(P_{D_{t_{i}}}|_{S}-x_{2}C)}.C)\},
\end{align*}
where $\mu_{t}={\rm sup}\{\alpha>0|\text{ }P_{D_{t}}|_{S}-\alpha C \in {\rm \overline{Eff}}(S)\}$, $P_{D_{t_{i}}}|_{S}-x_{2}C=P_{(P_{D_{t_{i}}}|_{S}-x_{2}C)}+N_{(P_{D_{t_{i}}}|_{S}-x_{2}C)}$ is the Zariski decomposition in the usual sense, and $\delta_{x_{2}}(t_{i})={\rm ord}_{p}(N_{(P_{D_{t_{i}}}|_{S}-x_{2}C)}|_{C})+l_{i}^{2}(t_{i})$ is a linear function on each $[\alpha_{i}, \beta_{i}]$ for fixed $x_{2} \in [l_{i}^{1}(t_{i}), \mu_{t_{i}}+l_{i}^{1}(t_{i})]$.
\end{cor}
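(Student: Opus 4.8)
The plan is to reduce the pseudo-effective case to the big-divisor case treated in Corollary \ref{cor:Mori dream space} by means of the limiting procedure and the continuity properties already established. Recall that for a nef and big divisor $A$, the divisor $D+\frac{1}{m}A$ is big for every $m$, and $\Delta_{X_{\bullet}}^{\rm lim}(D)=\bigcap_{m}\Delta_{X_{\bullet}}(D+\frac{1}{m}A)$. The idea is to apply Corollary \ref{cor:Mori dream space} to each approximant $D+\frac{1}{m}A$, obtaining an explicit description of the slices, and then pass to the limit as $m\to\infty$. The key point is that each ingredient in the surface-level description is continuous in the numerical class: the Zariski decomposition $D_{t}=P_{D_{t}}+N_{D_{t}}$ in terms of MDS varies linearly inside each Mori chamber, the restriction $P_{D_{t}}|_{S}$ and its ordinary Zariski decomposition $P_{(P_{D_{t}}|_{S}-x_{2}C)}+N_{(P_{D_{t}}|_{S}-x_{2}C)}$ vary continuously by the linearity of Zariski chambers on the surface $S$, and the intersection numbers $(P_{(P_{D_{t}}|_{S}-x_{2}C)}.C)$ are continuous. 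Hence the bounds $\mu$, $\mu_{t}$, $l_{i}^{j}(t)$, and $\delta_{x_{2}}(t)$ appearing in Corollary \ref{cor:Mori dream space} all converge to the corresponding quantities for $D$.

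First I would fix the nef and big divisor $A$ and verify that the Mori chamber structure is stable along the approximation: for $m\gg0$ the relevant segment $\{t\mid (D+\frac{1}{m}A)-tS\in\overline{C_{i}}\}$ is a small perturbation of $[\alpha_{i},\beta_{i}]$, and the index set $I_{D}$ computed with $\mu={\rm sup}\{t>0\mid D-tS\in{\rm Eff}(X)\}$ agrees with the limit of the index sets for the big approximants. This is where the definition of $\mu_{t}$ must switch from the big cone to $\overline{{\rm Eff}}(S)$, reflecting that $P_{D_{t}}|_{S}$ is now only pseudo-effective rather than big, which is exactly the only change in the statement relative to Corollary \ref{cor:Mori dream space}. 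Then I would invoke Remark \ref{rmk:extension}(2), which already asserts that Theorem \ref{thm:slices} extends to limiting Okounkov bodies, so that $\Delta_{X_{\bullet}}^{\rm lim}(D)_{x_{1}=t}=\Delta_{S_{\bullet}}^{\rm lim}(P_{D_{t}}|_{S})+l_{i}(t)$ for $t=t_{i}\in[\alpha_{i},\beta_{i}]$, reducing everything to the surface slice.

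At the surface level the description of $\Delta_{S_{\bullet}}^{\rm lim}(P_{D_{t}}|_{S})$ for the pseudo-effective restriction $P_{D_{t}}|_{S}$ is supplied by \cite[Theorem 4.5]{CPW1}, which gives precisely the limiting analogue of Example \ref{exmp:surface} with $\mu_{t}$ taken over $\overline{{\rm Eff}}(S)$ and the Zariski decomposition understood via its natural extension to pseudo-effective classes. Substituting this into the slice formula and assembling the slices over all $t$ and all chambers yields the claimed description, with $\delta_{x_{2}}(t)={\rm ord}_{p}(N_{(P_{D_{t}}|_{S}-x_{2}C)}|_{C})+l_{i}^{2}(t)$ linear on each $[\alpha_{i},\beta_{i}]$ by the same chamber-linearity argument as in Lemma \ref{lem:restriction}.

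The main obstacle I expect is the interchange of the limit over $m$ with the slicing and with the various Zariski decompositions: one must ensure that $\Delta_{S_{\bullet}}(P_{(D+\frac{1}{m}A)_{t}}|_{S})$ converges to $\Delta_{S_{\bullet}}^{\rm lim}(P_{D_{t}}|_{S})$ uniformly enough that the nested intersection $\bigcap_{m}$ reproduces the stated inequalities rather than a possibly larger or smaller body. This hinges on the continuity of the positive part $P_{D_{t}}$ as $D$ moves to the boundary of a Mori chamber and on the fact that the surface Zariski decomposition is continuous on the big cone and extends continuously to the pseudo-effective boundary; both are standard but must be checked where a chamber wall is crossed, since there the linear functions $l_{i}(t)$ and $\delta_{x_{2}}(t)$ change formula between adjacent pieces. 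Once this continuity is in place, the proof is otherwise a routine transcription of the big-divisor argument, which is why it is omitted in the text.
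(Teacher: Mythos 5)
Your proposal is correct and takes essentially the same route as the paper: the proof there is omitted precisely because it amounts to rerunning the proof of Corollary \ref{cor:Mori dream space} with the limiting versions of its two ingredients, namely the extension of Theorem \ref{thm:slices} to pseudo-effective divisors (Remark \ref{rmk:extension}-(2)) and the surface-level description of limiting Okounkov bodies from \cite[Theorem 4.5]{CPW1}, exactly as you do. Your extra discussion of the approximants $D+\tfrac{1}{m}A$ and the chamber-stability of the limit is just the content behind that remark, which the paper leaves to the reader.
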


\begin{rmk}
Note that \cite[Corollary 3.2]{KLM} describes the Okounkov bodies of big divisors on $X$, where $X$ is a smooth projective threefold such that $\overline{\rm Eff}(X)={\rm Nef}(X)$. In this case, it is clear that every effective divisor has a decomposition in Proposition \ref{prop:Zariski decomposition}, and that there are no undefined loci of SQMs of $X$ (so that we can choose any admissible flag). Therefore, by Remark \ref{rmk:extension}-(3), Corollary \ref{cor:Mori dream space} and \ref{cor:pseudo-effective divisor} can be seen as a generalization of \cite[Corollary 3.2]{KLM}. 
\end{rmk}

Now, we see some examples of Corollary \ref{cor:Mori dream space} and \ref{cor:pseudo-effective divisor}.

\begin{exmp} \label{exmp:blowing-up}
Let $X$ be the blowing-up of $\mathbb{P}^{3}$ at two points $p_{1}$ and $p_{2}$ with exceptional divisors $E_{1}$ and $E_{2}$. By \cite[Example 5.5]{AW}, $X$ is a Mori dream space. Let $\phi:X \rightarrow \mathbb{P}^{3}$ be the blowing-up map. Let $X_{\bullet}:X \supset S=E_{1} \cong \mathbb{P}^{2} \supset C \supset \{x\}$ be an admissible flag, where $C$ is any curve in $E_{1}$ and $x$ is any point in $C$. Let $d$ be the degree of $C$ and $L$ a line in $E_{1}$. Note that $C=dL$. For notational convenience, let $D_{t}:=D-tS$ for some pseudo-effective divisor $D$ with $t \in [\text{\rm ord}_{S}(\lVert D \lVert), \mu]$. \

Now, let $H$ be a hyperplane in $\mathbb{P}^{3}$ passing through $p_{1}$ and $p_{2}$. Furthermore, let $H_{1}:=\phi^{*}H-E_{1}$, $H_{2}:=\phi^{*}H-E_{2}$, and $H_{12}$ strict transform of plane passing through both $p_{1}$ and $p_{2}$. For descriptions of the (limiting) Okounkov bodies, we need to know Mori chamber decomposition of ${\rm Eff}(X)$. By \cite[Example 5.5]{AW}, $X$ has two Mori chambers, and denote nef parts of two Mori chambers by $N$ and $N'$, and each $N$ and $N'$ is generated by $\phi^{*}H$, $H_{1}$, $H_{2}$ and $H_{12}$, $H_{1}$, $H_{2}$, respectively. Moreover, let $\textcircled{\small 1}:=<\phi^{*}H, E_{1}, E_{2}>$, and $\textcircled{\small 2}:=<\phi^{*}H, H_{1}, E_{2}>$, where $<A,B,C>$ denotes the closed convex cone generated by $A$, $B$, and $C$ (FIGURE 1). First, consider a pseudo-effective divisor $D$ satisfying the condition on Corollary \ref{cor:pseudo-effective divisor} (or Corollary \ref{cor:Mori dream space}). 

\begin{figure}[h] 
\centering
\begin{tikzpicture}
\draw (0,0) node[anchor=north east]{$E_{2}$} -- (2,3.464101615) node[anchor=south]{$H_{12}$} -- (4,0) node[anchor=north west]{$E_{1}$} -- (0,0);
\draw (1,1.7320508075688) node[anchor=east]{$H_{1}$} -- (3,1.7320508075688) node[anchor=west]{$H_{2}$} -- (2,1.1547005383792515) node[anchor=north]{$\phi^{*}H$} --  (1,1.7320508075688);
\draw (2, 0.3) node{$\textcircled{\small 1}$};
\draw (1.2, 1.1) node{$\textcircled{\small 2}$};
\draw[loosely dashed] (2,1.1547005383792515) -- (4,0);
\draw[loosely dashed] (2,1.1547005383792515) -- (0,0);
\fill[gray] (1,1.7320508075688) -- (3,1.7320508075688) -- (2,1.1547005383792515) -- (1,1.7320508075688);
\fill[gray!60!white] (1,1.7320508075688) -- (2,3.464101615) -- (3,1.7320508075688) -- (1,1.7320508075688);
\end{tikzpicture}
\caption{Mori chamber decomposition of $\overline{{\rm Eff}}(X)$ (uppder side)}
\end{figure}
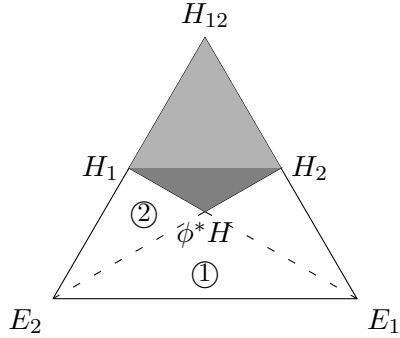

Let $D$ be a pseudo-effective divisor on $X$ such that $D \in \textcircled{\small 1}$. We may let $D=a\phi^{*}H+bE_{2}+cE_{1}$ with $a, b, c \ge 0$ with at least one nonzero $a, c$ (clearly, $\Delta_{X_{\bullet}}^{\rm lim}(bE_{2})=\{(0,0,0)\}$). Note that $c \le x_{1} \le a+c$. Also, for all $t \in [c, a+c]$, $D-tE_{1}=aH_{1}+(a+c-t)E_{1}+bE_{2}$, so $P_{D_{t}}=aH_{1}+(a+c-t)E_{1}$ and $N_{D_{t}}=bE_{2}$. So ${P_{D_{x_{1}}}}|_{E_{1}}=\frac{1}{d}(x_{1}-c)C$, $\mu_{x_{1}}=\frac{1}{d}(x_{1}-c)$ with $l_{i}(x_{1})=(0,0)$ for all $x_{1} \in [c, a+c]$. Therefore, we obtain
\begin{align*}
\Delta_{X_{\bullet}}^{\rm lim}(D)=&\{(x_{1},x_{2},x_{3}) \in \mathbb{R}^{3} | \text{ } c \le x_{1} \le a+c, \text{ } 0 \le x_{2} \le \frac{1}{d}(x_{1}-c), \\
& 0 \le x_{3} \le dx_{1}-d^{2}x_{2}-dc\}.
\end{align*}

Next, let $D$ be a pseudo-effective divisor on $X$ such that $D \in \textcircled{\small 2}$. We may let $D=aH_{1}+bE_{2}+c\phi^{*}H$ with $a, b, c \ge 0$ with at least one nonzero $a, c$. Note that $0 \le x_{1} \le c$. Also, for all $t \in [0,c]$, $D-tE_{1}=aH_{1}+bE_{2}+cH_{1}+(c-t)E_{1}=(a+c)H_{1}+(c-t)E_{1}+bE_{2}$. So $P_{D_{x_{1}}}=(a+c)H_{1}+(c-x_{1})E_{1}$, ${P_{D_{x_{1}}}}|_{E_{1}}=\frac{1}{d}(x_{1}+a)C$, $\mu_{x_{1}}=\frac{1}{d}(x_{1}+a)$, and $N_{D_{x_{1}}}=bE_{2}$: i.e., $l_{i}(x_{1})=(0,0)$ for all $x_{1} \in [0,c]$. Therefore, we obtain 
\begin{align*}
\Delta_{X_{\bullet}}^{\rm lim}(D)=&\{(x_{1},x_{2},x_{3}) \in \mathbb{R}^{3} | \text{ } 0 \le x_{1} \le c, \text{ } 0 \le x_{2} \le \frac{1}{d}(x_{1}+a), \\
& 0 \le x_{3} \le dx_{1}-d^{2}x_{2}+da\}.
\end{align*}

Now, let us see an example of Caution \ref{caution:caution}. Let $g$ be a non-trivial SQM of $X$. Since $E_{1} \cap {\rm Ud}(g) \neq \emptyset$ (\cite[Example 5.5]{AW}), any ample divisor does not satisfy the assumption ($S \cap (\cup_{i \in I_{D}} {\rm Ud}(f_{i}))=\emptyset$) in Corollary \ref{cor:Mori dream space}. Suppose that all ample divisors satisfy the description of Corollary \ref{cor:Mori dream space}. Then, for such $D$, we can easily obtain that 
\begin{align*}
\Delta_{X_{\bullet}}(D)=&\{(x_{1},x_{2},x_{3}) \in \mathbb{R}^{3} | \text{ } 0 \le x_{1} \le \mu, \text{ } 0 \le x_{2} \le \frac{1}{d}x_{1}+\frac{1}{d^{2}}(D.C), \\
& 0 \le x_{3} \le dx_{1}-d^{2}x_{2}+(D.C)\}.
\end{align*}
Fix an ample divisor $A$, and consider $H_{2}+\epsilon A$ with $\epsilon >0$. Clearly, $H_{2}+ \epsilon A$ is ample, so by definition, we obtain that 
\begin{align*}
\Delta_{X_{\bullet}}^{\rm lim}(H_{2}):=\lim_{\epsilon \rightarrow 0}\Delta_{X_{\bullet}}(H_{2}+\epsilon A)=&\{(x_{1},x_{2},x_{3}) \in \mathbb{R}^{3} | \text{ } 0 \le x_{1} \le 1, \text{ } 0 \le x_{2} \le \frac{1}{d}x_{1}, \\
& 0 \le x_{3} \le dx_{1}-d^{2}x_{2}\}.
\end{align*}
This is a contradiction since $\Delta_{X_{\bullet}}^{\rm lim}(H_{2})$ cannot be of full-dimensonal. 

\end{exmp}

\begin{exmp} \label{exmp:simple example}
Let $X$ be a $\mathbb{Q}$-factorial, normal hypersurface of any bidegree in $\mathbb{P}^{2} \times \mathbb{P}^{2}$. Then, by \cite[Theorem 1.1]{JCO}, $X$ is a Mori dream threefold with $\overline{{\rm Eff}}(X)=\overline{{\rm Mov}}(X)={\rm Nef}(X)$. More precisely, let $H_{i}=pr_{i}^{*}(\OO_{\mathbb{P}^{2}}(1))$ for $i=1, 2$, where $pr_{i}$ is the $i$-th projection. Then, 
\begin{align*}
\overline{{\rm Eff}}(X)=\overline{{\rm Mov}}(X)={\rm Nef}(X)=\mathbb{R}_{\ge 0}H_{1}+\mathbb{R}_{\ge 0}H_{2} \text{ {\rm (FIGURE 2)}}.
\end{align*} 
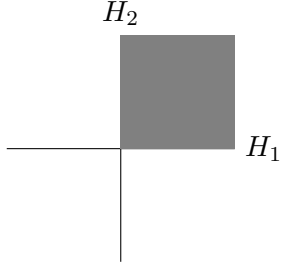
\begin{figure}[h] 
\centering
\begin{tikzpicture}
\draw (0,0) -- (0,1.5) node[anchor=south]{$H_{2}$};
\draw (0,0) -- (1.5,0) node[anchor=west]{$H_{1}$};
\draw (0,0) -- (0,-1.5);
\draw (0,0) -- (-1.5,0);
\fill[gray] (0,0) -- (0,1.5) -- (1.5,1.5) -- (1.5,0);
\end{tikzpicture}
\caption{$\overline{{\rm Eff}}(X)=\overline{{\rm Mov}}(X)={\rm Nef}(X)$ in ${\rm N^{1}}(X)_{\mathbb{R}}$}
\end{figure}
In this case, since small $\mathbb{Q}$-factorial modification is only the identity of $X$, there is no undefined locus of SQM on $X$: we can choose any admissible flag of $X$. Thus, we can easily obtain the limiting Okounkov bodies of any pseudo-effective divisors on $X$ for any admissible flags by using Corollary \ref{cor:pseudo-effective divisor} as in Example \ref{exmp:blowing-up}. We leave the details to the reader. 
\end{exmp}

\begin{exmp} \label{exmp:general hypersurface}
Let $X$ be a general hypersurface of bidegree $(d,e)$ in $\mathbb{P}^{1} \times \mathbb{P}^{3}$ with $d \le 3$. More precisely, let 
\begin{align*}
f=x_{0}^{d}f_{0}+x_{0}^{d-1}x_{1}f_{1}+\cdots +x_{1}^{d}f_{d}
\end{align*}
be the defining bihomogeneous polynomial of bidegree $(d,e)$ of $X$, where $x_{0}$, $x_{1}$ are coordinates on $\mathbb{P}^{1}$ and the $f_{i}$ are homogeneous forms of degree $e$ on $\mathbb{P}^{3}$. Note that $X$ is general in the sense that it is smooth and $(f_{0}= \cdots =f_{d}=0)$ is a smooth subvariety in $\mathbb{P}^{3}$ of codimension $d+1$ ($(f_{0}= \cdots =f_{d}=0)=\emptyset$ if $d=3$). As in Example \ref{exmp:simple example}, it is better to see \cite[Theorem 1.1]{JCO} for this example. Note that if $d$ satisfies $1 \le d \le 3$ or $e=1$, $X$ is a Mori dream threefold. Now, consider the following cases: \

(Case 1 : $d=3$ or $e=1$) In this case, $\overline{{\rm Eff}}(X)=\overline{{\rm Mov}}(X)={\rm Nef}(X)$. So there is only one SQM of $X$, which is the identity: we can take any pseudo-effective divisor and any admissible flag for Corollary \ref{cor:pseudo-effective divisor}. For descriptions of the limiting Okounkov bodies, as in Example \ref{exmp:simple example}, we leave the details to the reader. \

(Case 2 : $d=1$ with $e \ge 2$) In this case, by \cite[Theorem 1.1]{JCO},  
\begin{align*}
&\overline{{\rm Eff}}(X)=\mathbb{R}_{\ge 0}H_{1}+\mathbb{R}_{\ge 0}E \text{ {\rm (FIGURE 3)}}, \\
&\overline{\rm Mov}(X)={\rm Nef}(X)=\mathbb{R}_{\ge 0}H_{1}+\mathbb{R}_{\ge 0}H_{2},
\end{align*}
where $H_{i}$ is the pullback of hyperplanes by using $i$-th projection and $E=eH_{2}-H_{1}$. 
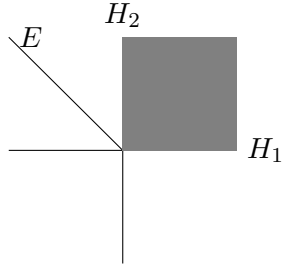
\begin{figure}[h] 
\centering
\begin{tikzpicture}
\draw (0,0) -- (0,1.5) node[anchor=south]{$H_{2}$};
\draw (0,0) -- (1.5,0) node[anchor=west]{$H_{1}$};
\draw (0,0) -- (0,-1.5);
\draw (0,0) -- (-1.5,0);
\draw (0,0) -- (-1.5,1.5) node[anchor=west]{$E$};
\fill[gray] (0,0) -- (0,1.5) -- (1.5,1.5) -- (1.5,0);
\end{tikzpicture}
\caption{$\overline{{\rm Eff}}(X)$ in ${\rm N^{1}}(X)_{\mathbb{R}}$}
\end{figure}
More precisely, if we let $f=x_{0}f_{0}+x_{1}f_{1}$ to be the defining bihomogeneous polynomial of $X$ in $\mathbb{P}^{1} \times \mathbb{P}^{3}$, then $X$ can be viewed as the blowing-up of $\mathbb{P}^{3}$ along a curve $C_{0}':=(f_{0}=f_{1}=0)$, where the blowing-up map $\pi$ is the second projection of $\mathbb{P}^{1} \times \mathbb{P}^{3}$ and $E$ is the exceptional divisor of this blowing-up. Also, since $\overline{{\rm Mov}}(X)={\rm Nef}(X)$, we can take any pseudo-effective divisors and any admissible flags of $X$.  Thus, as in Example \ref{exmp:blowing-up}, we can describe the limiting Okounkov bodies of any pseudo-effective divisors on $X$. Let us see a specific example. Let $f=x_{0}f_{0}+x_{1}f_{1}$ be the defining equation of $X$. Also, for a ruled surface $\pi:E \rightarrow C_{0}'$ with a section $C_{0}$, we assume that $({C_{0}}^{2}) \ge 0$. Then, by the proof of Lemma \ref{lem:ruled surface}, $\overline{{\rm Eff}}(E)={\rm Nef(E)}$. Fix an admissible flag $X_{\bullet}:X \supset E \supset C \supset \{p\}$, where $C$ is any curve of bidegree $(s_{1}, s_{2})$ in $E$ (i.e., $C=s_{1} C_{0}+s_{2} F$ with $(C_{0}^{2}) \ge 0$, $(F^{2})=0$ and $(C_{0}.F)=1$) and $p$ is any point in $C$. Let $\textcircled{\small 1} =<E,H_{2}>$, $\textcircled{\small 2}=<H_{1},H_{2}>$. First, let $D \in \textcircled{\small 1}$ be a pseudo-effective divisor on $X$. We may let $D=aE+bH_{2}$ with $a, b \ge 0$ with at least one nonzero $a$, $b$. It is clear that ${\rm ord}_{E}(\lVert D \lVert)=a$ and $\mu=a+\frac{b}{e}$. Let $A_{x_{1}}:=((x_{1}-a)H_{1}+(ae+b-x_{1}e)H_{2})|_{E}$ (we can describe $A_{x_{1}}=\alpha C_{0}+\beta F$ by using $(C_{0}^{2})$, $(H_{i}.C_{0})$ and $(H_{i}.F)$). Note that $\mu_{x_{1}}={\rm sup}\{\alpha>0 |\text{ } A_{x_{1}}-\alpha C \in {\rm Big}(E)\}$. Also, $(P_{(P_{D_{x_{1}}}|_{E}-x_{2}C)}.C)=(A_{x_{1}}.C)-x_{2}(s_{1}^{2}(C_{0}^{2})+2s_{1}s_{2})$. Therefore, we obtain
\begin{align*}
\Delta_{X_{\bullet}}^{\rm lim}(aE+bH_{2})=&\{(x_{1},x_{2},x_{3}) \in \mathbb{R}^{3}| \text{ } a \le x_{1} \le a+\frac{b}{e}, \text{ } 0 \le x_{2} \le \mu_{x_{1}}, \\
& 0 \le x_{3} \le (A_{x_{1}}.C)-x_{2}(s_{1}^{2}(C_{0}^{2})+2s_{1}s_{2})\}.
\end{align*}

Next, let $D \in \textcircled{\small 2}$ be a pseudo-effective divisor on $X$. We may let $D=aH_{1}+bH_{2}$ with $a, b \ge 0$ with at least one nonzero $a$, $b$. In this case, since $D$ is movable, ${\rm ord}_{E}(\lVert D \lVert)=0$. Also, since $D-x_{1}E=(x_{1}+a)H_{1}+(b-x_{1}e)H_{2}$, $\mu=\frac{b}{e}$. Let $A_{x_{1}}:=((x_{1}+a)H_{1}+(b-x_{1}e)H_{2})|_{E}$. As above, note that $\mu_{x_{1}}={\rm sup}\{\alpha>0|\text{ } A_{x_{1}}-\alpha C \in {\rm Big}(E)\}$. Also, $(P_{(P_{D_{x_{1}}}|_{E}-x_{2}C)}.C)=(A_{x_{1}}.C)-x_{2}(s_{1}^{2}(C_{0}^{2})+2s_{1}s_{2})$. Thus, we obtain
\begin{align*}
\Delta_{X_{\bullet}}^{\rm lim}(aH_{1}+bH_{2})=&\{(x_{1},x_{2},x_{3}) \in \mathbb{R}^{3}| \text{ } 0 \le x_{1} \le \frac{b}{e}, \text{ } 0 \le x_{2} \le \mu_{x_{1}}, \\
& 0 \le x_{3} \le (A_{x_{1}}.C)-x_{2}(s_{1}^{2}(C_{0}^{2})+2s_{1}s_{2})\}.
\end{align*}

(Case 3 : $d=2$ with $e \ge 2$) Finally, in this case, 
\begin{align*}
&\overline{{\rm Eff}}(X)=\overline{{\rm Mov}}(X)=\mathbb{R}_{\ge 0}H_{1}+\mathbb{R}_{\ge 0}(eH_{2}-H_{1}) \text{ {\rm (FIGURE 4)}}, \\
&{\rm Nef}(X)=\mathbb{R}_{\ge 0}H_{1}+\mathbb{R}_{\ge 0}H_{2},
\end{align*}
where $H_{1}$, $H_{2}$ are as above. 
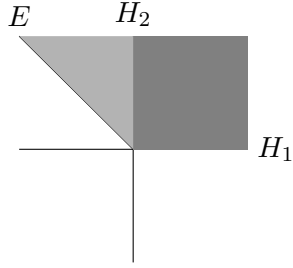
\begin{figure}[h] 
\centering
\begin{tikzpicture}
\draw (0,0) -- (0,1.5) node[anchor=south]{$H_{2}$};
\draw (0,0) -- (1.5,0) node[anchor=west]{$H_{1}$};
\draw (0,0) -- (0,-1.5);
\draw (0,0) -- (-1.5,0);
\draw (0,0) -- (-1.5,1.5) node[anchor=south]{$E$};
\fill[gray] (0,0) -- (0,1.5) -- (1.5,1.5) -- (1.5,0);
\fill[gray!60!white] (0,0) -- (0,1.5) -- (-1.5,1.5) -- (0,0);
\end{tikzpicture}
\caption{$\overline{{\rm Eff}}(X)$ in ${\rm N^{1}}(X)_{\mathbb{R}}$}
\end{figure}

Moreover, $X$ has two SQMs; one is the identity, and the other one with its undefined locus is disucssed in the proof of \cite[Theorem 1.1]{JCO}. More precisely, let $f=x_{0}^{d}f_{0}+x_{0}^{d-1}x_{1}f_{1}+\dots +x_{1}^{d}f_{d}$ be the defining bihomogeneous polynomial of $X$ on $\mathbb{P}^{1} \times \mathbb{P}^{3}$. Then, the undefined locus of such SQM is $Z(f_{0}, \dots, f_{d})$. Thus, for any admissible flag $X_{\bullet}:X \supset S \supset C \supset \{p\}$ with $S \cap Z(f_{0}, \dots, f_{d})=\emptyset$, we can describe the limiting Okounkov bodies of any pseudo-effective divisors on $X$ by Corollary \ref{cor:pseudo-effective divisor}. We leave the details to the reader. 
\end{exmp}

\end{section}

\begin{section} {Application}

In this section, we use Corollary \ref{cor:pseudo-effective divisor} to obtain conditions of rational polyhedrality of the limiting Okounkov bodies of pseudo-effective divisors on Mori dream threefolds. Before going on, we see some basic facts about Zariski chambers (see \cite{BKS} for details).

\begin{defn} \label{defn:Zariski chamber}
Let $S$ be a smooth projective surface. Let $P$ be a nef and big divisor on $S$. Define 
\begin{align*}
\Sigma_{P}=\{D \in {\rm Big}(S) \text{ } | \text{ } {\rm Neg}(D)={\rm Null}(P)\},
\end{align*}
where ${\rm Neg}(D)=\{C \text{ }| \text{ $C$ is an irreducible component of $N_{D}$}\}$, where $N_{D}$ is the negative part of $D$, and ${\rm Null}(P)$ is the set of irreducible curves on $S$ such that $(P.C)=0$. We call $\Sigma_{P}$ the Zariski chambers of $P$. 
\end{defn}

\begin{rmk} 
Note that $\Sigma_{P}$ is a convex cone that is neither open nor closed in general. Moreover, for nef and big divisors $P$ and $Q$, $\Sigma_{P} \bigcap \Sigma_{Q}=\emptyset$ if and only if $\Sigma_{P} \neq \Sigma_{Q}$. 
\end{rmk}

The importance of Zariski chambers come from the followinig Proposition \ref{prop:Zariski chamber} which is the main theorem in \cite{BKS}. 

\begin{prop} \label{prop:Zariski chamber}
Let $S$ be a smooth projective surface. Then, Zariski chambers give a locally finite decomposition of ${\rm Big}(S)$ such that the following holds: 
\begin{enumerate}[(1)]
\item The support of the negative part of the divisors in each chamber is constant. 
\item On each of the chambers, the volume function is given by a single quadratic polynomial. 
\item In the interior of each of the chambers, the stable base loci are constant. 
\end{enumerate}
\end{prop}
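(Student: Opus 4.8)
The plan is to build everything on the existence and uniqueness of the Zariski decomposition on a smooth projective surface, together with one structural input: if $P$ is nef and big, then the set $\mathrm{Null}(P)$ of irreducible curves orthogonal to $P$ has negative definite intersection matrix. I would prove this lemma first, since it controls the combinatorics of the entire decomposition. It follows from the Hodge index theorem: since $(P^2)>0$, the intersection form is negative definite on the orthogonal complement $P^{\perp}\subset \mathrm{N}^1(S)_{\mathbb{R}}$, and the classes of curves in $\mathrm{Null}(P)$ all lie in $P^{\perp}$.

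Granting the lemma, the first step is to show that the chambers $\Sigma_P$ cover $\mathrm{Big}(S)$ and are pairwise disjoint. Disjointness is immediate, since a nonempty $\Sigma_P$ determines $\mathrm{Null}(P)$ as the common value of $\mathrm{Neg}(D)$ over its elements. For the covering, given a big divisor $D$ with Zariski decomposition $D=P_D+N_D$, I would produce a nef and big $P$ with $\mathrm{Null}(P)=\mathrm{Neg}(D)$: one has $\mathrm{Neg}(D)\subseteq \mathrm{Null}(P_D)$ automatically, because each component of $N_D$ is orthogonal to $P_D$, and choosing $P$ in the relative interior of the face of $\mathrm{Nef}(S)$ cut out by $D$ pins the null locus down to exactly $\mathrm{Neg}(D)$, placing $D$ in $\Sigma_P$. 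The negative-definiteness lemma guarantees that the configurations arising this way are precisely the ones governing the decomposition.

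Next I would establish properties (1) and (2) together by describing each chamber explicitly. Fix a negative definite configuration $\{C_1,\dots,C_k\}=\mathrm{Null}(P)$ with invertible intersection matrix $M=(C_i\cdot C_j)$. For $D$ in the chamber, the coefficients of $N_D=\sum_i a_i C_i$ are forced by the orthogonality relations $(D\cdot C_j)=\sum_i a_i (C_i\cdot C_j)$, so that $a=M^{-1}\bigl((D\cdot C_j)\bigr)_j$ is linear in $D$. Hence $N_D$, and therefore $P_D=D-N_D$, are linear functions of $D$ on the chamber; property (1) is the statement that the support is the fixed set $\{C_i\}$, and property (2) follows from $\mathrm{vol}(D)=(P_D^2)$, which is a single quadratic polynomial because $P_D$ depends linearly on $D$. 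The chamber is then cut out by the linear conditions $a_i\geq 0$, expressing effectivity of $N_D$, together with the nefness conditions $(P_D\cdot \Gamma)\geq 0$ ranging over all curves $\Gamma$.

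The main obstacle is local finiteness, together with the infinitely many nefness inequalities just mentioned. I would handle both by showing that near any fixed big class only finitely many irreducible curves can occur in a negative part: such curves have negative self-intersection and are orthogonal to a nef and big class, and a boundedness argument bounding their degree against a fixed ample divisor on a compact slice of the big cone shows that only finitely many are relevant locally. This simultaneously gives local finiteness and reduces the nefness conditions to finitely many linear inequalities, so each chamber is locally rational polyhedral. Finally, for property (3) I would use that the stable base locus is governed by the Zariski decomposition, with $\mathbf{B}(D)=\mathrm{Supp}(N_D)\cup \mathbf{B}(P_D)$, and that in the chamber interior the support is constant with strictly positive coefficients while $P_D$ stays in the relative interior of a fixed face of $\mathrm{Nef}(S)$. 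Controlling $\mathbf{B}(P_D)$ for the nef and big class $P_D$, via the fact that its stable base locus is contained in $\mathrm{Null}(P_D)$ and remains constant as $P_D$ varies in that relative interior, is the most delicate point, and I expect it to demand the most care.
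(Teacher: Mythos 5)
First, a point of context: the paper does not prove this proposition at all --- it is quoted verbatim as the main theorem of \cite{BKS} --- so your proposal must be measured against the argument given there, which your outline in fact reconstructs quite faithfully: negative definiteness of $\mathrm{Null}(P)$ via Hodge index, the invertible Gram matrix forcing $N_D$ (hence $P_D$) to depend linearly on $D$ within a chamber, $\mathrm{vol}(D)=(P_D^2)$ giving the single quadratic polynomial, and disjointness essentially by definition. The serious gap is local finiteness, which is the heart of the theorem. Each individual $\mathrm{Neg}(D')$ is finite (negative definiteness even bounds its cardinality by $\rho(S)-1$, since the classes are linearly independent and lie in $P_{D'}^{\perp}$), but the issue is uniformity: as $D'$ ranges over a neighborhood of $D$, infinitely many \emph{different} curves could a priori occur. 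Your proposed degree bound does not deliver this: from $A\cdot N_{D'}\le A\cdot D'$ one cannot bound $A\cdot C$ for a component $C$ of $N_{D'}$, because its coefficient in $N_{D'}$ may be arbitrarily small (this is exactly what happens when $D'$ approaches a wall), and orthogonality to $P_{D'}$ is a condition against a class that moves with $D'$, so it gives no direct bound either. One genuinely needs an extra mechanism. One route (essentially that of \cite{BKS}) is monotonicity: $\mathrm{Neg}(D+sA)\subseteq \mathrm{Neg}(D)$ for $s\ge 0$ by subadditivity of asymptotic multiplicities, hence $\mathrm{Neg}(D')\subseteq \mathrm{Neg}(D-\epsilon A)$ for every $D'$ with $D'-D+\epsilon A$ ample, and the sets $\mathrm{Neg}(D-\epsilon A)$ are nested finite sets that stabilize as $\epsilon\downarrow 0$. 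Another route is compactness: infinitely many distinct negative curves $C_n$ orthogonal to big and nef classes $P_n$ in a compact set would have normalized classes $c_n=C_n/(A\cdot C_n)$ with $c_n^2\to 0$ (this uses adjunction, $C_n^2\ge -2-K_S\cdot C_n$, plus compactness of the slice), and a limit would produce a nonzero class of square zero in $P^{\perp}$ for $P$ big and nef, contradicting Hodge index. Either way, this step is real work and is absent from the proposal.

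The second gap is property (3), which you explicitly defer as ``the most delicate point.'' The input you need --- $\mathbf{B}(P)\subseteq \mathrm{Null}(P)$ for $P$ big and nef, together with its constancy as $P_D$ moves in the relative interior of a face of $\mathrm{Nef}(S)$ --- is, on a surface, Nakamaye's theorem $\mathbf{B}_+(P)=\mathrm{Null}(P)$ (or an argument of Zariski--Fujita/Reider type); it is a substantive theorem, not a routine verification, and \cite{BKS} devote a separate part of their paper to it. Deferring it leaves (3) unproven. Finally, a smaller repair in your covering step: the assertion that a class $P$ in the relative interior of the face of $\mathrm{Nef}(S)$ orthogonal to $\mathrm{Neg}(D)$ has $\mathrm{Null}(P)$ \emph{exactly} equal to $\mathrm{Neg}(D)$ needs an argument, since a priori every member of that face could be orthogonal to additional curves. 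It becomes correct once you (a) show $\mathrm{Null}$ is constant on the relative interior of any face of the nef cone (a short convexity argument), and (b) exhibit one element of the face whose null locus is exactly $\mathrm{Neg}(D)=\{C_1,\dots,C_k\}$, e.g. $P=A+N'$ with $A$ ample and $N'=\sum a_iC_i$ the unique solution of $(A+N')\cdot C_j=0$ for all $j$; here negative definiteness together with the non-negative off-diagonal entries of the Gram matrix gives $a_i\ge 0$, so $P$ is nef and big, vanishes exactly on the $C_i$, and is positive on every other curve.
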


\begin{rmk} \label{rmk:linear}
On each Zariski chamber, the map taking a divisor to its negative part is linear. 
\end{rmk}

Now, by using Proposition \ref{prop:Zariski chamber} and Corollary \ref{cor:pseudo-effective divisor}, we obtain the following corollary.

\begin{cor} \label{cor:polyhedral}
Notation is as in Corollary \ref{cor:pseudo-effective divisor}. Let $D$ be any pseudo-effective divisor. Suppose the following conditions hold:  
\begin{enumerate}[(1)]
\item ${\rm Big}(S)$ has finitely many Zariski chambers. 
\item $\mu_{t}:={\rm sup}\{\alpha>0|\text{ }P_{D_{t}}|_{S}-\alpha C \in {\rm \overline{Eff}}(S)\}$ is piecewise linear with finitely many pieces.
\end{enumerate}
Then, $\Delta_{X_{\bullet}}^{\rm lim}(D)$ is rational polyhedral. \

In particular, if $S$ is of Picard number $1$, then $\Delta_{X_{\bullet}}^{\rm lim}(D)$ is rational polyhedral. 
\end{cor}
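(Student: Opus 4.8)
The plan is to realize $\Delta_{X_{\bullet}}^{\rm lim}(D)$ as a finite union of rational polytopes and then invoke convexity to conclude that it is itself a rational polytope. First I would slice the $x_{1}$-range $[{\rm ord}_{S}(\lVert D \rVert),\mu]$ into the finitely many closed intervals $[\alpha_{i},\beta_{i}]$ coming from the Mori chamber decomposition of $\overline{{\rm Eff}}(X)$; these are finite in number and rational polyhedral by Proposition \ref{prop:basic}, and on each of them the function $l_{i}(t)=(l_{i}^{1}(t),l_{i}^{2}(t))$ of Theorem \ref{thm:slices} is affine. By Corollary \ref{cor:pseudo-effective divisor} the body is fibered over the $(x_{1},x_{2})$-plane: above the planar region $\Omega$ cut out by $x_{1}=t$ and $l_{i}^{1}(t)\le x_{2}\le \mu_{t}+l_{i}^{1}(t)$, the $x_{3}$-fiber is the interval $[\delta_{x_{2}}(t),\,\delta_{x_{2}}(t)+(P_{(P_{D_{t}}|_{S}-x_{2}C)}.C)]$. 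It therefore suffices to produce a finite rational polyhedral subdivision of $\Omega$ on each cell of which the two bounding functions for $x_{3}$ are affine.

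The key device is the affine map $\Phi:(t,x_{2})\mapsto P_{D_{t}}|_{S}-x_{2}C\in {\rm N^{1}}(S)_{\mathbb{R}}$. On each Mori chamber the positive part $P_{D_{t}}$ depends linearly on $t$ by Proposition \ref{prop:Zariski decomposition}, and $S$ avoids the relevant indeterminacy loci by hypothesis, so $\Phi$ is affine with rational coefficients. Hypothesis (1) gives finitely many Zariski chambers on ${\rm Big}(S)$; pulling their rational polyhedral closures back under $\Phi$ yields a finite rational polyhedral subdivision of the $(t,x_{2})$-domain. On a cell lying in the preimage of a single Zariski chamber, Remark \ref{rmk:linear} makes the negative part $N_{(P_{D_{t}}|_{S}-x_{2}C)}$ depend linearly on the divisor, hence affinely on $(t,x_{2})$; consequently ${\rm ord}_{p}(N_{(\cdots)}|_{C})$, and therefore $\delta_{x_{2}}(t)={\rm ord}_{p}(N_{(\cdots)}|_{C})+l_{i}^{2}(t)$, are affine, while the positive part $P_{(\cdots)}=(P_{D_{t}}|_{S}-x_{2}C)-N_{(\cdots)}$ is affine, so the width $(P_{(\cdots)}.C)$ is affine in $(t,x_{2})$. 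Intersecting this subdivision with the finitely many linear pieces of $\mu_{t}$ furnished by hypothesis (2) refines $\Omega$ into finitely many rational polygons on each of which all of $l_{i}^{1}(t)$, $\mu_{t}$, $\delta_{x_{2}}(t)$, and $(P_{(\cdots)}.C)$ are affine with rational coefficients.

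Over each such polygonal cell the region $\{\delta_{x_{2}}(t)\le x_{3}\le \delta_{x_{2}}(t)+(P_{(\cdots)}.C)\}$ is cut out by finitely many rational affine inequalities, hence is a rational polytope, and $\Delta_{X_{\bullet}}^{\rm lim}(D)$ is the finite union of these polytopes. Since an Okounkov body is compact and convex, and since any extreme point of a convex finite union of polytopes must be a vertex of one of the constituent pieces, the extreme points of $\Delta_{X_{\bullet}}^{\rm lim}(D)$ form a finite set of rational points; by Krein--Milman the body is their convex hull and is therefore rational polyhedral. I expect the main obstacle to be verifying that $\delta_{x_{2}}(t)$ and the width $(P_{(\cdots)}.C)$ are genuinely piecewise affine over a \emph{finite} rational subdivision: this is precisely where the linearity of the negative part on Zariski chambers (Remark \ref{rmk:linear}) and the finiteness hypotheses (1)--(2) must be combined with the affineness of $\Phi$, and where the transversality making ${\rm ord}_{p}(N_{(\cdots)}|_{C})$ well behaved is supplied by admissibility of the flag together with $S\cap(\cup_{i\in I_{D}}{\rm Ud}(f_{i}))=\emptyset$.

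For the final assertion, suppose $S$ has Picard number one. Then ${\rm Big}(S)={\rm Amp}(S)$, so every big divisor is ample with trivial negative part; there is a single Zariski chamber, which gives (1), while $\mu_{t}$ is computed by a single ratio of degrees that is linear on each Mori chamber, which gives (2). Hence both hypotheses hold automatically and $\Delta_{X_{\bullet}}^{\rm lim}(D)$ is rational polyhedral.
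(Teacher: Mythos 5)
Your overall strategy is the same as the paper's: use the explicit description from Corollary \ref{cor:pseudo-effective divisor}, the finiteness of Mori chambers, the linearity of the negative part on Zariski chambers (Remark \ref{rmk:linear}), and the piecewise linearity of $\mu_{t}$ to see that all the bounding data are piecewise affine with finitely many pieces. In fact your assembly of the pieces --- writing the body as a finite union of rational polytopes and then using extreme points plus Krein--Milman to conclude that a \emph{convex} finite union of polytopes is itself a polytope --- is more careful than the paper's proof, which essentially asserts ``we are done'' at the corresponding step. So the architecture is sound and, if anything, an improvement in rigor.

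There is, however, one step that is false as stated: ``pulling their rational polyhedral closures back under $\Phi$'' presumes that the closures of the Zariski chambers are rational polyhedral, and hypothesis (1) does not give this. For instance, if $S$ is an abelian surface (or a very general K3 surface) of Picard number at least $3$, then ${\rm Big}(S)={\rm Amp}(S)$ is a single Zariski chamber --- so (1) holds --- yet this chamber is a round, non-polyhedral cone. What is true, and what your argument actually needs, is the following: when there are finitely many chambers, the walls \emph{interior} to ${\rm Big}(S)$ lie on finitely many rational hyperplanes (this follows from the $\mathbb{Q}$-linearity of $D \mapsto N_{D}$ on each chamber, Remark \ref{rmk:linear}, together with the integrality of the finitely many curve classes occurring in the sets ${\rm Neg}$ and ${\rm Null}$), so the only possibly non-polyhedral part of a chamber closure is the part lying on $\partial\overline{{\rm Eff}}(S)$. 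In the $(t,x_{2})$-plane, that portion of the boundary of $\Omega$ is precisely the graph of $\mu_{t}$, which hypothesis (2) makes piecewise linear with finitely many pieces. Since you do intersect your subdivision with the pieces of $\mu_{t}$ in the next step, the final subdivision of $\Omega$ is indeed a finite rational polygonal one; but the justification must route through these two facts (rational interior walls, plus hypothesis (2) controlling the outer boundary), not through polyhedrality of the chambers themselves, which may fail. With that repair your proof is correct, including the Picard number one case, where your verification of (1) and (2) agrees with the paper's.
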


\begin{proof}
Since there are finitely many Mori chambers, we need to show the rational polyhedrality of $\Delta_{X_{\bullet}}^{\rm lim}(D)$ for each of them. Fix $i=1,\dots,r$. First, suppose that (1) and (2) hold. By Remark \ref{rmk:extension}-(2), $\Delta_{X_{\bullet}}^{\rm lim}(D)_{x_{1}=t}=\Delta_{S_{\bullet}}^{\rm lim}(P_{D_{t}}|_{S})+l_{i}(t)$ for each $t=t_{i} \in [\alpha_{i}, \beta_{i}]$. Since ${\rm Big}(S)$ has finitely many Zariski chambers, by Remark \ref{rmk:linear} and Example \ref{exmp:surface}, each $\Delta_{X_{\bullet}}^{\rm lim}(A)_{x_{1}=t_{i}}$ is rational polyhedral. However, since each $\mu_{t}$ is piecewise linear with finite pieces and $l_{i}(t)$ is linear, we are done. \

Now, let $S$ be of Picard number $1$. Then the condition (1) is clear. Also, since $P_{D_{t}}$ is linear on each Mori chamber and $S$ is of Picard number $1$, condition (2) holds. Therefore, $\Delta_{X_{\bullet}}^{\rm lim}(D)$ is rational polyhedral.
\end{proof}

\begin{exmp} \label{exmp:blowing-up rational}
Again, let $X$ be two points blowing-up of $\mathbb{P}^{3}$. Let $X_{\bullet}:X \supset S=E_{1} \cong \mathbb{P}^{2} \supset C \supset \{x\}$ be an admissible flag such that $C$ is a curve on $S$ and $x$ is a point in $C$. Then, by Corollary \ref{cor:polyhedral}, since $S$ is of Picard number $1$, $\Delta_{X_{\bullet}}^{\rm lim}(D)$ is rational polyhedral for any pseudo-effective divisor $D \in \textcircled{\small 1}$ or $\textcircled{\small 2}$ on $X$, where $\textcircled{\small 1}$ and $\textcircled{\small 2}$ are as in Example \ref{exmp:blowing-up}. 
\end{exmp}

\end{section}

\bibliographystyle{abbrv}

\end{document}